\DeclareMathAlphabet{\mathcal}{OMS}{cmsy}{m}{n}
\newcommand{\beq}{\begin{equation}} \newcommand{\eeq}{\end{equation}}
\newcommand\G{\vec G}
\newcommand\good{sep}
\numberwithin{equation}{section}
\def\vec#1{\mathchoice{\mbox{\boldmath$\displaystyle#1$}}
{\mbox{\boldmath$\textstyle#1$}}
{\mbox{\boldmath$\scriptstyle#1$}}
{\mbox{\boldmath$\scriptscriptstyle#1$}}}
\newcommand{\Zbg}[1]{Z_{\beta}(#1)}
\newcommand{\Zkg}[1]{Z_{\beta,\mathrm{\good}}(#1)}
\newcommand{\Zkb}[1]{Z_{\beta,\mathrm{bal}}(#1)}
\newcommand{\Zkn}[1]{Z_{\beta,\mathrm{\good}}(#1)}
\newcommand{\Zrb}{Z_{\rho,\mathrm{bal}}}
\newcommand{\Dg}{\mathcal D_{\mathrm{\good}}}
\newcommand{\Dng}[1]{\mathcal D_{#1,\mathrm{\good}}}
\newcommand{\Dsg}{\pazocal  D_{s,\mathrm{\good}}}
\newcommand{\rhos}{\rho_{\mathrm{stable}}}
\newcommand{\dc}{d_{k,\mathrm{cond}}}
\newcommand{\dk}{d_{k-\mathrm{col}}}
\DeclareMathOperator{\pr}{\mathbb P}
\newcommand\SIGMA{\vec\sigma}
\newcommand\TAU{\vec\tau}
\newtheorem{definition}{Definition}[section]
\newtheorem{claim}[definition]{Claim}
\newtheorem{theorem}[definition]{Theorem}
\newtheorem{lemma}[definition]{Lemma}
\newtheorem{proposition}[definition]{Proposition}
\newtheorem{corollary}[definition]{Corollary}
\newtheorem{fact}[definition]{Fact}
\newcommand\sign{\mathrm{sign}}
\newcommand\id{\mathrm{id}}
\newcommand\Forb{\cH_{K_n}}
\newcommand\cA{\mathcal{A}}
\newcommand\cB{\mathcal{B}}
\newcommand\cD{\mathcal{D}}
\newcommand\cH{\mathcal{H}}
\newcommand\cS{\mathcal{S}}
\DeclareMathAlphabet{\pazocal}{OMS}{zplm}{m}{n}
\def\cR{{\mathcal R}}
\def\cRb{{\pazocal R}}
\def\cDb{{\pazocal D}}
\def\cBd{{\pazocal B}}
\def\cSd{{\pazocal S}}
\newcommand\eps{\varepsilon}
\newcommand\Erw{\mathbb{E}}
\newcommand{\vecone}{\vec{1}}
\newcommand{\Bin}{{\rm Bin}}
\newcommand{\bink}[2] {{{#1}\choose {#2}}}
\newcommand\bc[1]{\left({#1}\right)}
\newcommand\cbc[1]{\left\{{#1}\right\}}
\newcommand\bcfr[2]{\bc{\frac{#1}{#2}}}
\newcommand\brk[1]{\left\lbrack{#1}\right\rbrack}
\newcommand\abs[1]{\left|{#1}\right|}
\newcommand\RR{\mathbb{R}}
\newcommand{\whp}{w.h.p.}
\newcommand{\Erdos}{Erd\H{o}s}
\newcommand{\Renyi}{R\'enyi}
\newcommand\Lem{Lemma}
\newcommand\Prop{Proposition}
\newcommand\Thm{Theorem}
\newcommand\Cor{Corollary}
\newcommand\Sec{Section}
\begin{document}

\title{On the Potts antiferromagnet on random graphs}

\author{Amin Coja-Oghlan$^*$ and Nor Jaafari}
\thanks{$^*$The research leading to these results has received funding from the European Research Council under the European Union's Seventh 
	Framework Programme (FP/2007-2013) / ERC Grant Agreement n.\ 278857--PTCC}

\address{Amin Coja-Oghlan, {\tt acoghlan@math.uni-frankfurt.de}, Goethe University, Mathematics Institute, 10 Robert Mayer St, Frankfurt 60325, Germany.}

\address{Nor Jaafari, {\tt jaafari@math.uni-frankfurt.de}, Goethe University, Mathematics Institute, 10 Robert Mayer St, Frankfurt 60325, Germany.}
\date{\today}

\maketitle
\begin{abstract}
	\noindent
Extending a prior result of Contucci et al.\ [Comm.\ Math.\ Phys.\ 2013],
we determine the free energy of the Potts antiferromagnet on the \Erdos-\Renyi\ random graph at all temperatures for average degrees
	$d\le (2k-1)\ln k - 2 - k^{-1/2}$.
In particular, we show that for this regime of $d$ there does not occur a phase transition.
	
	\bigskip
	\noindent
	\emph{Mathematics Subject Classification:} 05C80 (primary), 05C15 (secondary)
\end{abstract}
	
\section{Introduction}
\subsection{Background and motivation}
The Gibbs measure of the {\em $k$-spin Potts antiferromagnet at inverse temperature $\beta\geq0$}
on a graph $G=(V,E)$ is the probability measure on the set of all maps $\sigma:V\to[k]=\{1,\ldots,k\}$ defined by
\begin{align}\label{eqPotts}
\mu_{G,\beta}(\sigma)&=\frac{\exp(-\beta\cH_G(\sigma))}{Z_\beta(G)},\quad\mbox{where}\quad
\cH_G(\sigma)=\abs{\cbc{e\in E:|\sigma(e)|=1}}\quad\mbox{ and }\quad
\Zbg{G}=\sum_{\tau:V\to[k]}\exp(-\beta\cH_G(\tau)).
\end{align}
Thus, if we think of $[k]$ as a set of colors, then the function $\cH_G$, the {\em Hamiltonian} of $G$, maps a color assignment $\sigma$
to the number of monochromatic edges.
Moreover, $\beta\in[0,\infty)\mapsto \Zbg{G}$ is known as the {\em partition function}.
The Potts antiferromagnet is one of the best-known models of statistical physics.
Accordingly, it has been studied extensively on a wide class of graphs, particularly lattices~\cite{Baxter,MMtrees,LFglass}.
The aim of the present paper is to study the model on the \Erdos-\Renyi\ random graph $\G=\G(n,m)$.
Throughout the paper, we let $m=\lceil dn/2\rceil$ for a number $d>0$ that remains fixed as $n\to\infty$.
We also assume that the number $k\geq3$ of colors remains fixed as $n\to\infty$.

The Potts model on the random graph $\G$ is of interest partly due to the connection to the $k$-colorability problem.
Indeed, the larger $\beta$, the more severe the ``penalty factor'' of $\exp(-\beta)$ that each monochromatic edge induces in (\ref{eqPotts}).
Thus, if the underlying graph is $k$-colorable, then for large $\beta$ the Gibbs measure will put most of its weight on color assignments
	that leave few edges monochromatic.
Ultimately, one could think of the uniform distribution on $k$-colorings as the ``$\beta=\infty$''-case of the Gibbs measure (\ref{eqPotts}).
Now, consider the problem of finding a $k$-coloring of the random graph by a local search algorithm such as Simulated Annealing.
Then most likely the algorithm will start from a color assignment that has quite a few monochromatic edges.
As the algorithm proceeds, it will attempt to gradually reduce the number of monochromatic edges by running the
Metropolis process for the Gibbs measure (\ref{eqPotts}) with a value of $\beta$ that increases over time.
Specifically, $\beta$ has to be large enough to make progress but small enough so that the algorithm does not get trapped in a local minimum of the Hamiltonian.
Hence, to figure out whether such a local search algorithm will find a proper $k$-coloring in polynomial time,
it is instrumental to study the ``shape'' of the Hamiltonian.

To this end, it is key to get a handle on the {\em free energy}, defined as $\Erw[\ln \Zbg{\G}]$.
We take the logarithm because $\Zbg{\G}$ scales exponentially in the number $n$ of vertices.
As a standard application of Azuma's inequality shows that $\ln \Zbg{\G}$ is concentrated about its expectation
(see Fact~\ref{lem:logconc} below),
$\frac1n|\ln \Zbg{\G}-\Erw[\ln \Zbg{\G}]|$ converges to $0$ in probability.
Furthermore, if $\Erw[\ln \Zbg{\G}]\sim\ln \Erw[\Zbg{\G}]$ for certain $d,\beta$,
then the Hamiltonian can be studied via an easily accessible probability distribution called the {\em planted model}.
This trick has been applied to the ``proper'' graph coloring problem as well as to other random constraint satisfaction problems successfully
\cite{Barriers,Molloy}.

\subsection{The main result}
Because our motivation largely comes from the random graph coloring problem, we are going to confine ourselves
to values of $d$ where the random graph $\G$ is $k$-colorable \whp\
Although the precise $k$-colorability threshold $\dk$ is not currently known, we have~\cite{Covers,Danny}
\begin{align}\label{eqkcol}
(2k-1)\ln k-2\ln2+o_k(1)\leq\dk\leq(2k-1)\ln k-1+o_k(1),
\end{align}
where $o_k(1)$ hides a term that tends to $0$ in the limit of large $k$.
The following theorem determines $\frac1n\Erw[\ln\Zbg{\G}]$ almost up to the lower bound from (\ref{eqkcol}).

\begin{theorem}\label{Thm_main}
	There is $k_0>0$ such that for all $k\geq k_0$, $d\le d_{\star}=(2k-1)\ln k-2-k^{-1/2}$, $\beta>0$ we have
	\begin{align}\label{eqThm_main}
	\lim_{n\to\infty}\frac1n\Erw[\ln \Zbg{\G}]&=\lim_{n\to\infty}\frac1n\ln \Erw[\Zbg{\G}]=\ln k+\frac d2\ln(1-(1-\exp(-\beta))/k).
	\end{align}
\end{theorem}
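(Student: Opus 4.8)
The upper bound $\Erw[\ln\Zbg{\G}]\le\ln\Erw[\Zbg{\G}]$ is Jensen's inequality, so the genuine tasks are to evaluate the annealed free energy and to match it from below. For the annealed free energy, write $z=1-\exp(-\beta)\in(0,1)$ and $\Erw[\Zbg{\G}]=\sum_{\sigma\colon[n]\to[k]}\Erw[\exp(-\beta\cH_{\G}(\sigma))]$. Treating the $m=\lceil dn/2\rceil$ edges of $\G(n,m)$ as independent uniform pairs (which costs only $o(n)$ in the exponent), a colouring $\sigma$ whose colour classes have relative sizes $\rho=(\rho_1,\dots,\rho_k)$ contributes $(1-z\,q(\rho)+O(1/n))^m$, where $q(\rho)=\sum_{i\in[k]}\rho_i^2$. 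Summing over $\sigma$ and applying Stirling's formula gives
\[
\tfrac1n\ln\Erw[\Zbg{\G}]=\max_{\rho}\brk{H(\rho)+\tfrac d2\ln\bc{1-z\,q(\rho)}}+o(1),
\]
with $H$ the entropy and $\rho$ ranging over probability vectors on $[k]$. Since $H(\rho)$ is maximised, and $q(\rho)$ minimised, at the barycentre $\rho=\vecone/k$, and $x\mapsto\ln(1-zx)$ is decreasing, the maximiser is $\rho=\vecone/k$; this returns the right-hand side of \eqref{eqThm_main} and, incidentally, shows that balanced colourings carry all but an $\exp(o(n))$ fraction of $\Erw[\Zbg{\G}]$.

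For the lower bound on $\tfrac1n\Erw[\ln\Zbg{\G}]$ the plan is a truncated second moment argument, combined with the concentration estimate of Fact~\ref{lem:logconc} and the Paley--Zygmund inequality. I would pass to the restricted partition function $\Zkn{\G}\le\Zbg{\G}$ counting only \emph{balanced, separable} colourings, where ``separable'' means --- loosely --- that $\sigma$ is not contained in an atypically heavy cluster of $\mu_{\G,\beta}$, equivalently the overlap of $\sigma$ with an independent Gibbs sample concentrates at the barycentre. Two facts are needed: (a) $\Erw[\Zkn{\G}]=(1-o(1))\Erw[\Zkb{\G}]$, so that $\tfrac1n\ln\Erw[\Zkn{\G}]$ still tends to the annealed value; and (b) the bounded-second-moment estimate $\Erw[\Zkn{\G}^2]\le C\,\Erw[\Zkn{\G}]^2$ for some $C=C(d,k,\beta)$ and all $d\le d_\star$, $\beta>0$. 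Fact (a) I would establish via the planted model (the Gibbs measure conditioned on a planted colouring), in which separability of the planted colouring reduces to a local computation on the Galton--Watson tree $\tgw$ that is the local weak limit of $\G$. Given (a) and (b), Paley--Zygmund yields $\Zbg{\G}\ge\Zkn{\G}\ge\tfrac12\Erw[\Zkn{\G}]$ with probability at least $1/(4C)$, hence $\tfrac1n\ln\Zbg{\G}\ge\tfrac1n\ln\Erw[\Zkn{\G}]-o(1)$ on an event of constant probability; combining this with Fact~\ref{lem:logconc} forces $\tfrac1n\Erw[\ln\Zbg{\G}]\ge\ln k+\tfrac d2\ln(1-z/k)$ in the limit, matching the upper bound.

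It remains to explain (b). Expanding $\Erw[\Zkn{\G}^2]=\sum_{\sigma,\tau}\Erw[\exp(-\beta\cH_{\G}(\sigma)-\beta\cH_{\G}(\tau))]$ over balanced separable pairs $(\sigma,\tau)$ and encoding such a pair by its overlap matrix $\rho=(\rho_{ij})_{i,j\in[k]}$, $\rho_{ij}=\tfrac1n\abs{\sigma^{-1}(i)\cap\tau^{-1}(j)}$ --- which ranges over the polytope $\Birk$ of nonnegative $k\times k$ matrices with all row and column sums $1/k$, a scaled Birkhoff polytope --- a fixed such pair contributes $(f(\rho)+O(1/n))^m$, where
\[
f(\rho)=\sum_{i,i',j,j'\in[k]}\rho_{ij}\rho_{i'j'}\bc{1-z\,\mathbbm{1}\{i=i'\}}\bc{1-z\,\mathbbm{1}\{j=j'\}}.
\]
Note $f(\vecone/k^2)=(1-z/k)^2$. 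Hence by Stirling $\tfrac1n\ln\Erw[\Zkn{\G}^2]=\max\brk{H(\rho)+\tfrac d2\ln f(\rho)}+o(1)$, the maximum over overlaps of separable pairs. The decisive analytic input is that, for every $\beta>0$ and every $d\le d_\star$, this maximum equals $2\ln k+d\ln(1-z/k)$ --- twice the annealed exponent --- and is attained \emph{only} at the barycentre $\bar\rho=\matone/k^2$, with negative definite Hessian there. A Laplace/local-limit estimate in a $\Theta(\sqrt n)$-window around $\bar\rho$ then converts this exponential-order identity into the sharp bound (b); the separability restriction is exactly what removes the contribution of overlaps near the diagonal $\matid/k$ and near the boundary of $\Birk$, which would otherwise require cruder combinatorial control.

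The optimisation statement of the previous paragraph is, I expect, the main obstacle: one must rule out any competitor to $\bar\rho$ on the whole scaled Birkhoff polytope, uniformly in $\beta\in(0,\infty)$. The natural route is to split $\Birk$ into a shrinking neighbourhood of $\bar\rho$, where a second-order (and a careful higher-order) expansion shows $\bar\rho$ is the global maximum exactly when $d$ lies below a threshold of the form $(2k-1)\ln k-2+o_k(1)$ --- this is the origin of the constant in $d_\star$ and of the requirement that $d_\star$ sit below the condensation point $\dc$ --- and the complementary region, on which the function lies below the barycentre value by a fixed amount and so is handled with room to spare. The regime $\beta\to\infty$ degenerates to the Achlioptas--Naor second moment computation for $k$-colourability and small $\beta$ is perturbative, so the hard part is carrying the estimate uniformly over all temperatures with the $o_k(\cdot)$ error terms tracked sharply enough to reach $d_\star=(2k-1)\ln k-2-k^{-1/2}$.
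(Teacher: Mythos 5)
Your architecture matches the paper's: Jensen for the upper bound, a first moment over balanced colourings, a truncated second moment over ``separable'' colourings whose restricted first moment is controlled via the planted model, and Paley--Zygmund combined with Fact~\ref{lem:logconc}. (One remark: you do not need the bounded ratio $\Erw[\Zkn{\G}^2]\le C\,\Erw[\Zkn{\G}]^2$ nor any local CLT around $\bar\rho$; an $\exp(o(n))$ ratio suffices, because the Azuma concentration in Fact~\ref{lem:logconc} is exponentially strong and beats the $\exp(o(n))$ loss in Paley--Zygmund.) However, two points you leave open or get wrong are exactly where the content of the theorem lies.

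First, the optimisation of $H(k^{-1}\rho)+\frac d2\ln f(\rho)$ over the Birkhoff polytope. You attribute the critical constant in $d_\star=(2k-1)\ln k-2-k^{-1/2}$ to a higher-order expansion near $\bar\rho$ and assert that the complement of a neighbourhood of $\bar\rho$ ``lies below the barycentre value by a fixed amount.'' Neither is correct: the local behaviour at $\bar\rho$ is unproblematic in this range of $d$, and the binding competitor is the distant $k$-stable matrix $\rhos=(1-1/k)\matid+k^{-2}\matone$ (pairs in the same cluster). Separability removes only the band $\rho_{ij}\in(0.51,1-\kappa)$, not these matrices; one must beat them by explicit computation, and comparing $f_{d,\infty}(\bar\rho)=c/k+O_k(\ln k/k^2)$ with $f_{d,\infty}(\rhos)=1/k+c/(2k)+O_k(\ln k/k^2)$ for $d=(2k-1)\ln k-c$ is precisely what forces $c>2$. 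Moreover, the uniformity in $\beta$ that you flag as the main obstacle is resolved in the paper not by a direct uniform estimate but by two monotonicity lemmas: $\partial_\beta f_{d,\beta}(\rho)$ and $\partial_d f_{d,\infty}(\rho)$ are both more negative at $\bar\rho$ than anywhere else, so any inequality $f(\bar\rho)\ge f(\rho)$ proved at $\beta=\infty$ (i.e., the known $k$-colourability optimisation of Coja-Oghlan--Vilenchik) or at $\beta=\ln k$ propagates to all smaller $\beta$ and $d$. Without this reduction your plan has no proof of the decisive analytic input.

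Second, your single truncated second moment ``for every $\beta>0$'' cannot work as stated. Any clustering-based separability notion strong enough to excise the band $(0.51,1-\kappa)$ from the overlap polytope is a \emph{global} property of $(\G,\sigma)$ (it quantifies over all competing colourings $\tau$), so it does not reduce to a local computation on the Galton--Watson tree; and it only holds with probability $1-o(n^{-1/2})$ in the planted model at low temperature. At high temperature the Gibbs measure is not clustered, the restricted first moment $\Erw[\Zkn{\G}]\sim\Erw[\Zkb{\G}]$ fails, and the paper instead handles $\beta\le\ln k$ by relaxing the overlap optimisation to singly stochastic matrices (the Achlioptas--Naor device), reserving the separable second moment for $\beta\ge\ln k$. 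Your proposal is missing this case split.
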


Clearly, the function on the r.h.s.\ of (\ref{eqThm_main}) is analytic in $\beta\in(0,\infty)$.
Thus, in the language of mathematical physics \Thm~\ref{Thm_main} implies that the Potts antiferromagnet
on the random graph does not exhibit a phase transition for any average degree $d<d_{\star}$.

\subsection{Related work}
The problem of determining the $k$-colorability threshold of the random graph was raised in the seminal paper by \Erdos\ and \Renyi\ and is thus the
longest-standing open problem in the theory of random graphs~\cite{ER}.
Achlioptas and Friedgut~\cite{AchFried} proved the existence of a non-uniform sharp threshold.
Moreover, a simple greedy algorithm finds a $k$-coloring for degrees up to about $k\ln k$, approximately half the $k$-colorability threshold~\cite{AchMolloy}.
Further, Achlioptas and Naor~\cite{AchNaor} used the second moment method to establish a lower bound of $\dk\geq2(k-1)\ln k+o_k(1)$,
which matches the first-moment upper bound $\dk\leq(2k-1)\ln k+o_k(1)$ up to about an additive $\ln k$.
Coja-Oghlan and Vilenchik~\cite{Danny} improved the lower bound to $\dk\geq(2k-1)\ln k-2\ln 2+o_k(1)$ via a second moment argument
that incorporates insights from non-rigorous physics work~\cite{LenkaFlorent}.
On the other hand, Coja-Oghlan~\cite{Covers} proved $\dk\leq(2k-1)\ln k-1+o_k(1)$.
The results from~\cite{AchNaor,Danny} were subsequently generalized to various other models, including
random regular graphs and random hypergraphs~\cite{PeterCatherine,Reg,DFG,KPGW}. 

The Potts antiferromagnet on the random graph was studied before by Contucci, Dommers, Giardina and Starr~\cite{CDGS},
who generalized the second moment argument from~\cite{AchNaor} to the Potts model.
In particular, \cite{CDGS} shows that (\ref{eqThm_main}) holds for all $\beta\geq0$ if $d\leq(2k-2)\ln k-2$.
An analogous result was recently obtained  (among other things) by Banks and Moore~\cite{Banks} for a variant of the stochastic block model that resembles
 the Potts antiferromagnet.
Their proof is based on~\cite{AchNaor} as well.
In the present paper we improve the corresponding results of~\cite{Banks,CDGS}
by extending the physics-enhanced second moment argument from~\cite{Danny} to the Potts antiferromagnet.

Physics considerations suggest that for average degrees $d>(2k-1)\ln k-2\ln2+o_k(1)$
a phase transition does occur, i.e., the function $\beta\in(0,\infty)\mapsto\lim_{n\to\infty}\frac1n\Erw[\ln \Zbg{\G}]$ is
non-analytic~\cite{pnas,MM,LenkaFlorent}.
The existence and location of the condensation phase transition has been established asymptotically in the hypergraph $2$-coloring and the hardcore model 
and precisely in the regular $k$-SAT model and the $k$-colorability problem~\cite{Victor,Cond,h2c,BST}.
However, the Potts antiferromagnet is conceptually more challenging than hardcore, $k$-SAT or hypergraph $2$-coloring
because the ``variables'' (viz.\ vertices) can take more than two values (colors).
Potts is also more difficult than $k$-coloring because of the presence of the inverse temperature parameter $\beta$.
In fact, the present work is partly motivated by studying condensation in the Potts antiferromagnet, and
we hope that \Thm~\ref{Thm_main} and its proof may pave the way to pinpointing the phase transition precisely,
	see \Sec~\ref{Sec_cond} below.
Additionally, as mentioned above, \Thm~\ref{Thm_main} implies that for $d\leq(2k-1)\ln k-2-k^{-1/2}$ the Hamiltonian
	can be studied by way of the planted model.
Finally, the {\em ferromagnetic} Potts model (where the Gibbs measure favors monochromatic edges) is far better understood than
the antiferromagnetic version~\cite{DMSS}.

\subsection{Preliminaries}
Throughout the paper we assume that $k\geq k_0$ for a large enough constant $k_0>0$.
Moreover, let
	$$c_\beta=1-\exp(-\beta).$$
Unless specified otherwise, the standard $O$-notation refers to the limit $n\to\infty$. 
We always assume tacitly that $n$ is sufficiently large.
Additionally, we use asymptotic notation in the limit of large $k$ with a subscript  $k$.

\begin{fact}
	\label{lem:logconc}
	For any $\delta>0$ there is $\eps=\eps(\delta,\beta,d)>0$ such that
	$\limsup_{n\to\infty}\frac1n\ln\pr[|\ln \Zbg{\G}-\Erw[\ln \Zbg{\G}]|>\delta n]<-\eps.$
\end{fact}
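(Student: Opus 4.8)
The statement is a routine concentration-of-measure result, and the natural tool is the Azuma--Hoeffding inequality applied to an edge-exposure martingale. The plan is to expose the $m=\lceil dn/2\rceil$ edges of $\G(n,m)$ one at a time and track how much $\ln\Zbg{\G}$ can move when a single edge is revealed. First I would set up the Doob martingale $X_0,X_1,\dots,X_m$ with $X_t=\Erw[\ln\Zbg{\G}\mid e_1,\dots,e_t]$, where $e_1,\dots,e_m$ are the (ordered) edges of $\G$, so that $X_0=\Erw[\ln\Zbg{\G}]$ and $X_m=\ln\Zbg{\G}$.

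The crucial input is a bounded-differences estimate: adding or removing a single edge changes $\ln\Zbg{\G}$ by at most $\beta$. Indeed, if $G$ and $G'$ differ in exactly one edge, then for every $\tau\colon V\to[k]$ the Hamiltonians satisfy $|\cH_G(\tau)-\cH_{G'}(\tau)|\le1$, hence $\exp(-\beta)\le\exp(-\beta\cH_G(\tau))/\exp(-\beta\cH_{G'}(\tau))\le\exp(\beta)$, and summing over $\tau$ yields $\eul^{-\beta}\Zbg{G'}\le\Zbg{G}\le\eul^{\beta}\Zbg{G'}$, i.e.\ $|\ln\Zbg{G}-\ln\Zbg{G'}|\le\beta$. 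A standard coupling argument (resampling the $t$-th edge) then shows $|X_t-X_{t-1}|\le\beta$ for every $t$; more precisely one couples the two random graphs obtained by the two choices of $e_t$ so that they differ in at most two edges, giving a martingale difference bounded by $2\beta$ (the precise constant is immaterial).

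With $|X_t-X_{t-1}|\le 2\beta$ for $1\le t\le m$, Azuma's inequality gives, for any $\delta>0$,
\begin{align*}
\pr\brk{\,\abs{\ln\Zbg{\G}-\Erw[\ln\Zbg{\G}]}>\delta n\,}\le 2\exp\br{-\frac{\delta^2 n^2}{8\beta^2 m}}=2\exp\br{-\frac{\delta^2 n}{8\beta^2(d+o(1))}}.
\end{align*}
Taking logarithms, dividing by $n$, and letting $n\to\infty$ shows that the limsup is at most $-\delta^2/(9\beta^2 d)<0$, so any $\eps=\eps(\delta,\beta,d)$ smaller than this works. There is no real obstacle here; the only point requiring a line of care is the bounded-differences claim, which hinges on the trivial observation that a single edge contributes at most $1$ to the Hamiltonian and therefore at most a factor $\eul^{\pm\beta}$ to the partition function.
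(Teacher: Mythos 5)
Your proposal is correct and follows essentially the same route as the paper: a single-edge Lipschitz bound on $\ln\Zbg{\cdot}$ (the paper states it as $2\beta$, you sharpen it to $\beta$ per edge and $2\beta$ per martingale step, which is immaterial) followed by Azuma's inequality for the edge-exposure martingale of $\G(n,m)$. The minor factor-of-two slip in your final exponent ($4\beta^2 d$ versus $8\beta^2 d$) does not affect the existence of $\eps>0$.
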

\begin{proof}
	If $G,G'$ are multi-graphs such that $G'$ can be obtained from $G$ by adding or deleting a single edge, then
	$|\ln\Zbg{G} -\ln\Zbg{G'}|\le 2\beta$.
	Hence, the assertion follows from Azuma's inequality.
\end{proof}

If $s$ is an integer, we write $[s]$ for the set $\{1,\ldots,s\}$.
Further, if $v$ is a vertex of a graph $G$, then $\partial v=\partial_G(v)$ is the set of neighbors of $v$ in $G$.
If $\rho$ is a matrix, then by $\rho_i$ we denote the $i$th row of $\rho$ and by $\rho_{ij}$ the $j$th entry of $\rho_i$.
Further, the Frobenius norm of a $k\times k$-matrix $\rho$ is
	\[ \|\rho\|_2=\brk{\sum_{i,j\in[k]}\rho_{ij}^2}^{1/2}. \]

For a probability distribution $p:\Omega\to[0,1]$ on a finite set $\Omega$ we denote by
	$$H(p)=-\sum_{x\in \Omega}p(x)\ln p(x)$$
the entropy of $p$ (with the convention that $0\ln0=0$). 
Additionally, if $\rho$ is a $k\times k$-matrix with non-negative entries, then we let
	$$H(\rho)=-\sum_{i,j\in[k]}\rho_{ij}\ln\rho_{ij}.$$
Further,  $h:[0,1]\to\RR$ denotes the function
	\begin{align*}
	h(z)=-z \ln z -(1-z)\ln(1-z).
	\end{align*}
We will use the following standard fact about the entropy.

\begin{fact}
	\label{fact:entropy}
	Let $p\in[0,1]^k$ be such that $\sum_{i=1}^{k}p_i=1$. 
	Let $I\subset[k]$ and suppose that $q=\sum_{i\in I}p_i\in(0,1)$.
	Then 
		$$H(p)\le h(q)+q\ln |I|+	(1-q)\ln(k-|I|).$$
\end{fact}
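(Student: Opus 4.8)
The plan is to split the entropy of $p$ according to the partition $\{I,\ [k]\setminus I\}$ of the index set and to bound each of the two resulting pieces by the entropy of the corresponding conditional distribution.

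First I would write
\[
H(p)=-\sum_{i\in I}p_i\ln p_i-\sum_{i\in[k]\setminus I}p_i\ln p_i.
\]
Since $q=\sum_{i\in I}p_i\in(0,1)$, the family $(p_i/q)_{i\in I}$ is a probability distribution on $I$ and $(p_i/(1-q))_{i\in[k]\setminus I}$ is a probability distribution on $[k]\setminus I$. Substituting $p_i=q\cdot(p_i/q)$ for $i\in I$ and expanding the logarithm gives
\[
-\sum_{i\in I}p_i\ln p_i=-q\ln q\sum_{i\in I}\frac{p_i}{q}-q\sum_{i\in I}\frac{p_i}{q}\ln\frac{p_i}{q}=-q\ln q+q\,H\bigl((p_i/q)_{i\in I}\bigr),
\]
and symmetrically for the sum over $[k]\setminus I$, with $q$ replaced by $1-q$ and $I$ by $[k]\setminus I$.

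Next I would invoke the standard fact that the entropy of a probability distribution supported on a finite set of size $s$ is at most $\ln s$ (attained by the uniform distribution; this is Jensen's inequality for the concave function $\ln$, equivalently the non-negativity of the relative entropy against the uniform distribution). Applying this with $s=|I|$ and with $s=k-|I|$ respectively yields
\[
-\sum_{i\in I}p_i\ln p_i\le-q\ln q+q\ln|I|,\qquad-\sum_{i\in[k]\setminus I}p_i\ln p_i\le-(1-q)\ln(1-q)+(1-q)\ln(k-|I|).
\]
Adding these two inequalities and recalling that $h(q)=-q\ln q-(1-q)\ln(1-q)$ gives the claimed bound. The only points worth a word of care are the convention $0\ln 0=0$, which ensures vanishing entries of $p$ cause no trouble, and the hypothesis $q\in(0,1)$, which guarantees that both conditional distributions are well defined; beyond that there is no real obstacle, the statement being a routine rearrangement of the definition of entropy together with the maximum-entropy property of the uniform distribution.
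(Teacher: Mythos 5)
Your proof is correct: the grouping (chain-rule) decomposition $H(p)=h(q)+qH\bigl((p_i/q)_{i\in I}\bigr)+(1-q)H\bigl((p_i/(1-q))_{i\notin I}\bigr)$ followed by the bound $H\le\ln s$ for a distribution on $s$ points is exactly the standard argument this statement rests on. The paper records it as a standard fact without proof, so there is nothing to compare against beyond noting that your write-up, including the care about $0\ln 0=0$ and the hypothesis $q\in(0,1)$, is the canonical one.
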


\noindent
	\begin{lemma}[Chernoff bound, e.g.\ {\cite{janson2011random}}]
		\label{lem:chernoff} 
		Let $X$ be a binomial random variable with mean $\mu>0$. Then for any $t>1$, we have 
		$
		\pr[X>t\mu]\le \exp[-t\mu \ln(t/e)].$
	\end{lemma}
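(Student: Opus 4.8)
The plan is to prove this by the standard exponential-moment (Bernstein--Chernoff) method; for a binomial variable this is entirely routine, so there is no genuine obstacle, only the minor bookkeeping of optimising over the exponential tilt and then discarding a harmless positive term to arrive at the clean bound $\exp[-t\mu\ln(t/\eul)]$. Write $X\sim\Bin(N,p)$ with $Np=\mu$; the same computation works verbatim for any sum of independent $\{0,1\}$-valued random variables, or for a Poisson variable, should one want the statement in that generality.

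First I would apply Markov's inequality to the tilted variable $\eul^{\lambda X}$ for an arbitrary $\lambda>0$. Since $X\ge t\mu$ implies $\eul^{\lambda X}\ge\eul^{\lambda t\mu}$, and using independence of the Bernoulli summands,
\[
\pr[X\ge t\mu]\le\eul^{-\lambda t\mu}\,\Erw[\eul^{\lambda X}]=\eul^{-\lambda t\mu}\,(1-p+p\eul^{\lambda})^N.
\]
The second step is to bound the moment generating function via $1+x\le\eul^x$ with $x=p(\eul^{\lambda}-1)$, which gives $(1-p+p\eul^{\lambda})^N=(1+p(\eul^{\lambda}-1))^N\le\exp[Np(\eul^{\lambda}-1)]=\exp[\mu(\eul^{\lambda}-1)]$, and hence $\pr[X\ge t\mu]\le\exp[\mu(\eul^{\lambda}-1)-\lambda t\mu]$.

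The third step is to optimise the exponent over $\lambda>0$. The map $\lambda\mapsto\mu(\eul^{\lambda}-1)-\lambda t\mu$ is convex with derivative $\mu\eul^{\lambda}-t\mu$, so it is minimised at $\eul^{\lambda}=t$, i.e.\ $\lambda=\ln t$; this is admissible precisely because $t>1$, which is the one place the hypothesis is used. Substituting yields $\pr[X\ge t\mu]\le\exp[\mu(t-1)-\mu t\ln t]=\exp[-\mu(t\ln t-t+1)]$. Finally, since $t\ln t-t+1\ge t\ln t-t=t\ln(t/\eul)$ — we simply drop the nonnegative constant $1$ — and $\pr[X>t\mu]\le\pr[X\ge t\mu]$, we conclude $\pr[X>t\mu]\le\exp[-t\mu\ln(t/\eul)]$, as claimed. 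The only points requiring a moment's care are checking $\lambda=\ln t>0$ and the last slack estimate; everything else is mechanical.
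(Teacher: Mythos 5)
Your proof is correct and complete: the exponential tilting, the bound $\Erw[\eul^{\lambda X}]\le\exp[\mu(\eul^\lambda-1)]$, the optimal choice $\lambda=\ln t$ (valid since $t>1$), and the final relaxation $t\ln t-t+1\ge t\ln(t/\eul)$ are all in order. The paper itself gives no proof — it simply cites the bound from Janson--\Luczak--Ruci\'nski — and your argument is precisely the standard derivation found there, so there is nothing to compare beyond noting that you have supplied the omitted routine details.
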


\section{Outline}
\label{sec:outline}

\noindent
We prove \Thm~\ref{Thm_main} by generalizing the second moment argument for $k$-colorings from~\cite{Danny} to the partition function
of the Potts antiferromagnet.
In this section we describe the proof strategy.
Most of the technical details are left to the subsequent sections.

\subsection{The first moment}
As a first step we calculate the first moment $\Erw[\Zbg{\G}]$.
This is pretty straightforward; in fact, it has been done before~\cite{CDGS}.
Nonetheless, we go over the calculations to introduce a few concepts that will prove important in the second moment argument as well.

\begin{proposition}[\cite{CDGS}]
	\label{prop:firstmoment}
For all $\beta,d>0$ we have
	$\Erw[\Zbg{\G}]=\Theta\left(k^n\left(1-{c_\beta}/{k}\right)^m\right).$
\end{proposition}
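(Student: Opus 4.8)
The proposition is a direct first-moment computation, carried out in~\cite{CDGS}; the plan is to reproduce it in a form that also fixes notation used in the second-moment argument. First I would expand, by linearity, $\Erw[\Zbg{\G}]=\sum_{\sigma:[n]\to[k]}\Erw[\exp(-\beta\cH_{\G}(\sigma))]$. Fix a colouring $\sigma$ and let $\rho=\rho(\sigma)\in\{0,\tfrac1n,\ldots,1\}^k$ be its \emph{colour density}, i.e.\ $\rho_i=|\sigma^{-1}(i)|/n$ (so $\sum_i\rho_i=1$). Writing $p_\rho=\binom{n}{2}^{-1}\sum_{i\in[k]}\binom{\rho_in}{2}=\frac{n\|\rho\|_2^2-1}{n-1}$ for the fraction of monochromatic vertex pairs and using that the $m$ edges of $\G$ are drawn independently and uniformly, the number of monochromatic edges is distributed as $\Bin(m,p_\rho)$, whence $\Erw[\exp(-\beta\cH_{\G}(\sigma))]=(1-c_\beta p_\rho)^m$. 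Grouping colourings by density,
\[
\Erw[\Zbg{\G}]=\sum_{\rho}\binom{n}{\rho_1 n,\ldots,\rho_k n}(1-c_\beta p_\rho)^m .
\]
Since $1/k\le\|\rho\|_2^2\le1$, we have $p_\rho=\|\rho\|_2^2+O(1/n)$ and $1-c_\beta\|\rho\|_2^2\ge 1-c_\beta>0$ uniformly, so replacing $p_\rho$ by $\|\rho\|_2^2$ inside the $m$-th power changes each summand by a bounded factor; thus it suffices to estimate $\sum_\rho\binom{n}{\rho n}(1-c_\beta\|\rho\|_2^2)^m$ up to constants.

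For the upper bound, Cauchy--Schwarz gives $\|\rho\|_2^2\ge1/k$, hence $(1-c_\beta\|\rho\|_2^2)^m\le(1-c_\beta/k)^m$, and the multinomial theorem gives $\sum_\rho\binom{n}{\rho n}=k^n$, so $\Erw[\Zbg{\G}]=O(k^n(1-c_\beta/k)^m)$. For the matching lower bound I would keep only the $\Theta(n^{(k-1)/2})$ densities $\rho$ with $|\rho_i n-n/k|\le\sqrt n$ for all $i$. For each of these, $\|\rho\|_2^2=1/k+O(1/n)$ gives $(1-c_\beta\|\rho\|_2^2)^m=\Omega((1-c_\beta/k)^m)$, while $H(\rho)=\ln k-O(1/n)$ together with a two-sided Stirling estimate gives $\binom{n}{\rho n}=\Omega(k^n n^{-(k-1)/2})$. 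Summing over these $\Theta(n^{(k-1)/2})$ densities the two polynomial factors cancel, yielding $\Erw[\Zbg{\G}]=\Omega(k^n(1-c_\beta/k)^m)$. (The exponential rate $\ln k+\tfrac d2\ln(1-c_\beta/k)$ thus already equals the right-hand side of \eqref{eqThm_main}; the substance of \Thm~\ref{Thm_main} is the second-moment estimate $\Erw[\ln\Zbg{\G}]\sim\ln\Erw[\Zbg{\G}]$.)

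The only slightly technical point is the two-sided Stirling estimate $\binom{n}{\rho n}=\Theta(k^n n^{-(k-1)/2})$, uniform over the $\sqrt n$-neighbourhood of the balanced density, matched against the count $\Theta(n^{(k-1)/2})$ of densities in that neighbourhood; everything else is elementary. Conceptually, the reason the answer is so clean is that the balanced density $\rho=\vecone/k$ simultaneously maximises the entropy $H(\rho)$ and minimises $\|\rho\|_2^2$, so no actual optimisation over the simplex is required. Finally, if $\G(n,m)$ denotes the uniformly random \emph{simple} graph with $m$ edges, then $\cH_{\G}(\sigma)$ is hypergeometric rather than binomial; this alters $\Erw[\exp(-\beta\cH_{\G}(\sigma))]$ only by a bounded factor and hence does not change the conclusion.
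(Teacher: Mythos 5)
Your proof is correct and follows essentially the same route as the paper: for fixed $\sigma$ the quantity $\Erw[\exp(-\beta\cH_{\G}(\sigma))]$ is the (hypergeometric $\approx$ binomial) generating function $\Theta\bigl((1-c_\beta\Forb(\sigma)/\binom n2)^m\bigr)$, the upper bound comes from $\Forb(\sigma)\ge\binom n2/k-O(n)$ (your Cauchy--Schwarz step is the paper's convexity step), and the lower bound from restricting to balanced colourings. Grouping by colour densities with an explicit two-sided Stirling count, rather than summing over individual balanced $\sigma$ as the paper does, is only a bookkeeping difference.
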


To lower-bound $\Zbg{\G}$ we follow Achlioptas and Naor \cite{AchNaor} and work with ``balanced'' color assignments whose color classes
are all about the same size.
Specifically, call $\sigma:[n]\to[k]$ \emph{balanced} if  $\left||\sigma^{-1}(i)|-\frac{n}{k}\right|\le \sqrt{n}$ for all $i=1,\ldots,k$.
Of course, by Stirling's formula the set $\cBd=\cBd(n,k)$ of all balanced $\sigma:[n]\to[k]$ has size $|\cBd|=\Theta(k^n)$.
Let
	\begin{align*}
	\Zkb{\G} = \sum_{\sigma \in \cBd} \exp\left(-\beta \cH_{\G}(\sigma)\right)
	\end{align*}
be the partition function restricted to balanced maps.
Moreover, let
	\begin{align*}
	\Forb(\sigma)=\sum_{i=1}^k \binom{|\sigma^{-1}(i)|}{2}.
	\end{align*}
be the number of monochromatic edges of the complete graph.
Then uniformly for all balanced $\sigma$,
	\begin{align}\label{eqForb0}
	\Forb(\sigma)&=k\binom{\frac{n}{k}+O(\sqrt{n})}{2} 
	=\binom{n}{2}\frac{1}{k}+O(n).
	\end{align}
Hence, by Stirling's formula
	\begin{align}
	\Erw \left[\exp\left(-\beta \cH_{\G}(\sigma)\right)\right]  &
		=\sum_{m_1=0}^m
			\exp(-\beta m_1)\bink{\Forb(\sigma)}{m_1}\bink{\bink{n}2-\Forb(\sigma)}{m-m_1}\bink{\bink{n}2}{m}^{-1}\nonumber\\
		&=\Theta(1)\sum_{m_1=0}^m\bink{m}{m_1}\bcfr{\Forb(\sigma)}{\exp(\beta)\bink n2}^{m_1}
			\bc{1-\frac{\Forb(\sigma)}{\bink n2}}^{m-m_1}.
				\label{eqForb1}
	\end{align}
Combining (\ref{eqForb0}) and (\ref{eqForb1}), we find
	\begin{align}
	\Erw[\Zkb{\G}]=\sum_{\sigma\in\cB}\Erw \left[\exp\left(-\beta \cH_{\G}(\sigma)\right)\right]=
		\Theta\left(k^n\left(1-{c_\beta }/{k}\right)^\frac{nd}{2}\right).\label{eq:balO}
	\end{align}
On the other hand, for all $\sigma$ we have $\Forb(\sigma)\geq \frac{1}{k}\binom{n}{2}-n$ by convexity.
Therefore, (\ref{eqForb1}) yields
	\begin{align}\label{eq:balO_2}
	\Erw[\Zbg{\G}] &= \sum_{\sigma}\Erw \left[\exp\left(-\beta \cH_{\G}(\sigma)\right)\right]\le O\left(k^n\left(1-{c_\beta}/{k}\right)^\frac{nd}{2}\right).
	\end{align}
Combining (\ref{eq:balO}) and (\ref{eq:balO_2}), we obtain \Prop~\ref{prop:firstmoment}.
Moreover, comparing (\ref{eq:balO}) and (\ref{eq:balO_2}), we see that  $\Erw[\Zkb{\G}]$ and $\Erw[\Zbg{\G}]$ are of the same order of magnitude.
Since it is technically more convenient to work with $\Zkb{\G}$, we are going to perform the second moment argument for that random variable.

\subsection{The second moment}
\label{subsec:outline2mm}
Following~\cite{AchNaor}, we define the {\em overlap matrix} 
	$\rho(\sigma,\tau)=(\rho_{ij}(\sigma,\tau))_{i,j\in[k]}$
	of $\sigma,\tau:[n]\to[k]$ by letting
\begin{align}
\label{eq:overlapsdef}
\rho_{ij}(\sigma,\tau)= \frac{k}{n}|\sigma^{-1}(i)\cap \tau^{-1}(j)|.
\end{align}
Thus, $k^{-1}\rho_{ij}(\sigma,\tau)$ is the fraction of vertices with color $i$ under $\sigma$ and color $j$ under $\tau$. 
Let $\cRb=\cRb(n,k)=\{\rho(\sigma,\tau):\sigma,\tau\in\cB\}$ be the set of all possible overlap matrices and set
	\begin{align*}
	\Zrb(\G)=\sum_{\substack{(\sigma,\tau)\in\cB^2\\ \rho(\sigma,\tau)=\rho}}\exp\left(-\beta\left(\cH_{\G}(\sigma)+\cH_{\G}(\tau)\right)\right).
	\end{align*}
Then
	\begin{align}
	\Erw[\Zkb{\G}^2]=\sum_{(\sigma,\tau)\in\cBd^2}\Erw\left[ \exp\left(-\beta\left(\cH_{\G}(\sigma)+\cH_{\G}(\tau)\right)\right)\right] = \sum_{\rho\in\cRb}\Erw [\Zrb(\G)].\label{eq:balancedpairs}
	\end{align}
Further, define
\begin{align}
f_{d,\beta}(\rho)&=
H(k^{-1}\rho)+\frac{d}{2}\ln\left[1-\frac{2}{k}c_\beta+\frac{\|\rho\|_2^2}{k^2}c_\beta^ 2\right].\label{eq:optfunc}
\end{align}
Then an elementary argument similar to the proof of \Prop~\ref{prop:firstmoment} yields

\begin{proposition}[{\cite{CDGS}}]
	\label{prop:exppairs}
	Uniformly for all $\rho\in \cRb$ we have $\Erw [\Zrb(\G)] = \exp(nf_{d,\beta}(\rho)+o(n))$.
\end{proposition}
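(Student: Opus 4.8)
The plan is to compute $\Erw[\Zrb(\G)]$ by the same route that produced \Prop~\ref{prop:firstmoment}, only now tracking a pair $(\sigma,\tau)\in\cBd^2$ with overlap matrix $\rho(\sigma,\tau)=\rho$. Fix such a pair. A single random edge $e=\{u,v\}$ of $\G$ contributes $\exp(-\beta[\mathbbm 1\{\sigma(u)=\sigma(v)\}+\mathbbm 1\{\tau(u)=\tau(v)\}])$ to $\exp(-\beta(\cH_{\G}(\sigma)+\cH_{\G}(\tau)))$. Writing $n_{ij}=|\sigma^{-1}(i)\cap\tau^{-1}(j)|=\tfrac nk\rho_{ij}$, the probability (over the choice of the endpoints of one edge in $K_n$, up to the usual $\binom n2$ versus $n^2/2$ discrepancy) that $u,v$ lie in prescribed $\sigma$- and $\tau$-classes is governed by these $n_{ij}$. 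First I would show that the per-edge expectation factorizes as
\begin{align*}
\Erw\!\left[\exp(-\beta[\mathbbm 1\{\sigma(u)=\sigma(v)\}+\mathbbm 1\{\tau(u)=\tau(v)\}])\right]
=1-\frac2k c_\beta+\frac{\|\rho\|_2^2}{k^2}c_\beta^2+O(1/n),
\end{align*}
by splitting into the four cases according to whether $\sigma(u)=\sigma(v)$ and/or $\tau(u)=\tau(v)$: the $\sigma$-monochromatic probability is $\sum_i(|\sigma^{-1}(i)|/n)^2+O(1/n)=1/k+O(1/n)$ by balancedness, similarly for $\tau$, and the jointly-monochromatic probability is $\sum_{i,j}(n_{ij}/n)^2+O(1/n)=\|\rho\|_2^2/k^2+O(1/n)$; inclusion–exclusion with weights $\exp(-\beta)$ then gives the displayed quantity, using $c_\beta=1-\exp(-\beta)$.

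Since $\G=\G(n,m)$ has $m=\lceil dn/2\rceil$ independent (bar the multigraph convention) edges, independence across edges yields
\begin{align*}
\Erw\!\left[\exp(-\beta(\cH_{\G}(\sigma)+\cH_{\G}(\tau)))\right]
=\left(1-\frac2k c_\beta+\frac{\|\rho\|_2^2}{k^2}c_\beta^2+O(1/n)\right)^{m}
=\exp\!\left(\frac d2\ln\!\left[1-\frac2k c_\beta+\frac{\|\rho\|_2^2}{k^2}c_\beta^2\right]n+o(n)\right),
\end{align*}
where the last step uses $m\sim dn/2$ and that the bracketed quantity is bounded away from $0$ (it is at least $1-2c_\beta/k+c_\beta^2/k^2=(1-c_\beta/k)^2$ by Cauchy--Schwarz, since $\|\rho\|_2^2\ge 1$ for any doubly-stochastic-type matrix with row/column sums $1$ after the $k^{-1}$ scaling). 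To finish, I would count the number of pairs $(\sigma,\tau)\in\cBd^2$ with $\rho(\sigma,\tau)=\rho$: this is a multinomial coefficient $\binom{n}{(n_{ij})_{i,j\in[k]}}$, and Stirling's formula gives $\frac1n\ln\binom{n}{(n_{ij})}=-\sum_{i,j}\frac{n_{ij}}n\ln\frac{n_{ij}}n+o(1)=H(k^{-1}\rho)+o(1)$, using the notation $H(\rho)$ from the preliminaries applied to the matrix $k^{-1}\rho$ whose entries $n_{ij}/n$ sum to $1$. Multiplying the count by the per-pair expectation and collecting the two contributions gives exactly $\exp(n f_{d,\beta}(\rho)+o(n))$ with $f_{d,\beta}$ as in \eqref{eq:optfunc}.

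The only points needing care are uniformity of the error terms. The $o(n)$ must be uniform over all $\rho\in\cRb$; this follows because there are only polynomially many overlap matrices (each entry $n_{ij}$ is an integer in $\{0,\dots,n\}$, so $|\cRb|\le (n+1)^{k^2}$), because the Stirling estimate for the multinomial coefficient has an error that is $O(\log n)$ uniformly, and because the bracketed per-edge quantity is uniformly bounded in $[(1-c_\beta/k)^2,\,1]$ so that $\ln$ is uniformly Lipschitz there and the $(1+O(1/n))^m$ correction is $\exp(O(1))$ uniformly. The mild discrepancy between sampling edges from $\binom{[n]}2$ versus ordered pairs, and the ceiling in $m=\lceil dn/2\rceil$, only affect lower-order terms and are absorbed into $o(n)$ exactly as in the proof of \Prop~\ref{prop:firstmoment}. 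I do not expect a genuine obstacle here; this is the ``elementary argument'' the statement alludes to, and the real work of the paper lies in the subsequent optimization of $f_{d,\beta}$ over $\cRb$.
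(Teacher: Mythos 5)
Your argument is correct and is precisely the ``elementary argument similar to the proof of \Prop~\ref{prop:firstmoment}'' that the paper alludes to but does not write out (it defers to \cite{CDGS}): the multinomial coefficient over the cell sizes $n_{ij}$ yields the entropy term $H(k^{-1}\rho)$, and the per-pair edge weight yields the energy term, exactly as in (\ref{eq:optfunc}). The one step to tighten is the appeal to ``independence across edges'': in the simple $\G(n,m)$ model the $m$ edges are drawn without replacement, so the factorization of the expectation into an $m$-th power should instead be obtained from the multivariate hypergeometric sum and the $\Theta(1)$-accurate multinomial rewriting used in (\ref{eqForb1}) (or by a standard transfer to the independent-edges model), which changes nothing at the $\exp(o(n))$ scale.
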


\noindent
The function $f_{d,\beta}$ is a sum of an entropy term
	$H(k^{-1}\rho)$ and an ``energy term'' 
	\begin{align*}
	E(\rho)=E_{d,\beta}(\rho)=\frac{d}{2}\ln\left[1-\frac{2}{k}c_\beta+\frac{\|\rho\|_2^2}{k^2}c_\beta^2 \right].
	\end{align*}
For future reference we note that
	\begin{align}
	\frac{\partial}{\partial\rho_{ij}}H(k^{-1}\rho)&=\frac1k\left(-1-\ln(\rho_{ij})\right)\label{eq:derivH},\\
	\frac{\partial}{\partial\rho_{ij}}E(\rho)&=\frac{d}{k^2}\frac{c_\beta^2\rho_{ij}}{1-\frac{2}{k}c_\beta+\|\rho\|_2^2c_\beta/k^2}.\label{eq:derivE}
	\end{align}
The number $|\cR|$ of summands on the right hand side of \eqref{eq:balancedpairs} is easily bounded by $n^{k^2}$.
Therefore,
\begin{align}
\label{eq:2mmopt}
\frac1n \ln \Erw[\Zkb{\G}^2]=\frac1n \ln \sum_{\rho\in\cRb}\Erw[\Zrb(\G)]\sim \max_{\rho\in\cRb}\frac{1}{n}\ln \Erw[\Zrb(\G)]\sim \max_{\rho\in\cRb} f_{d,\beta}(\rho).
\end{align}
Denote by $\cS$ the set of all singly-stochastic matrices and by $\cD$ the set of all doubly-stochastic $k\times k$ matrices, respectively.
Then $\bigcup_{n\geq1}\cRb(n,k)\cap\cDb$ is a dense subset of $\cDb$.
Together with (\ref{eq:2mmopt}) the continuity of $f$ therefore implies
\begin{align}
\label{eq:2mmoptbal}
\frac1n \ln \Erw[\Zkb{\G}^2]\sim \max_{\rho\in\cDb}f_{d,\beta}(\rho).
\end{align}
Setting $\bar{\rho}=k^{-1}\vecone$ to be the barycenter of $\cDb$, we obtain from Proposition \ref{prop:exppairs}  that
\begin{align} 
f_{d,\beta}(\bar\rho)\sim \frac2n\ln \Erw \left[\Zkb{\G}\right].
\end{align}
Hence,
just as in the case of proper $k$-colorings~\cite{AchNaor,CDGS},
 a {\em necessary} condition for the success of the second moment method is that the function $f_{d,\beta}$ attains its maximum on $\cD$
at the point $\bar\rho$.

\subsection{Small average degree or high temperature}
Contucci, Dommers, Giardina and Starr~\cite{CDGS} proved that the maximum in (\ref{eq:2mmoptbal}) is indeed attained at $\bar\rho$
if the average degree is a fair bit below the $k$-colorability threshold.

\begin{theorem}[\cite{CDGS}]\label{Thm_CDGS}
Assume that $d<2(k-1)\ln(k-1)$.
Then (\ref{eqThm_main}) holds for all $\beta>0$.
\end{theorem}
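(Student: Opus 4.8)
The plan is to reduce Theorem~\ref{Thm_CDGS} to the purely analytic statement that, for $d<2(k-1)\ln(k-1)$ and every $\beta>0$, the function $f_{d,\beta}$ attains its maximum over the set $\cD$ of doubly stochastic $k\times k$ matrices at the barycentre $\bar\rho=k^{-1}\vecone$; everything else is the standard second-moment packaging. Granting this, \eqref{eq:2mmoptbal} gives $\Erw[\Zkb{\G}^2]=\exp(nf_{d,\beta}(\bar\rho)+o(n))=\exp(o(n))\,\Erw[\Zkb{\G}]^2$, using $f_{d,\beta}(\bar\rho)\sim\tfrac2n\ln\Erw[\Zkb{\G}]$. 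Hence by the Paley--Zygmund inequality $\Zkb{\G}\ge\tfrac12\Erw[\Zkb{\G}]$ with probability at least $\exp(-o(n))$, and since $\Zbg{\G}\ge\Zkb{\G}$ while $\Erw[\Zkb{\G}]=\Theta(\Erw[\Zbg{\G}])$ by \eqref{eq:balO} and Proposition~\ref{prop:firstmoment}, on that event $\tfrac1n\ln\Zbg{\G}\ge\tfrac1n\ln\Erw[\Zbg{\G}]-o(1)$. As $\exp(-o(n))$ outweighs the failure probability $\exp(-\Omega(n))$ of the concentration event $\{|\tfrac1n\ln\Zbg{\G}-\tfrac1n\Erw[\ln\Zbg{\G}]|\le\delta\}$ of Fact~\ref{lem:logconc} (for any fixed $\delta>0$), the two events intersect for large $n$, forcing $\tfrac1n\Erw[\ln\Zbg{\G}]\ge\tfrac1n\ln\Erw[\Zbg{\G}]-\delta-o(1)$; letting $n\to\infty$, then $\delta\to0$, and adding Jensen's inequality gives $\lim_n\tfrac1n\Erw[\ln\Zbg{\G}]=\lim_n\tfrac1n\ln\Erw[\Zbg{\G}]$, which by Proposition~\ref{prop:firstmoment} is \eqref{eqThm_main}.

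It remains to prove the optimisation claim, and the point is that the $\beta$-dependence can be stripped away. Write $\rho=\bar\rho+M$, where $M$ is a $k\times k$ matrix all of whose rows and columns sum to $0$; since $\langle\bar\rho,M\rangle=0$ we have $\|\rho\|_2^2=1+t$ with $t:=\|M\|_2^2=\|\rho-\bar\rho\|_2^2$, and a short computation (using $\|\bar\rho\|_2^2=1$) turns \eqref{eq:optfunc} into
\[
f_{d,\beta}(\rho)-f_{d,\beta}(\bar\rho)=\bigl[H(k^{-1}\rho)-2\ln k\bigr]+\tfrac d2\ln\!\Bigl(1+\tfrac{t\,c_\beta^2}{(k-c_\beta)^2}\Bigr).
\]
The map $c\mapsto c/(k-c)$ is increasing on $[0,1]$, so for each fixed $\rho$ the right-hand side is non-decreasing in $c_\beta\in(0,1)$, hence at most its value at $c_\beta=1$, namely $f_{d,\infty}(\rho)-f_{d,\infty}(\bar\rho)$. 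Consequently it suffices to prove that $f_{d,\infty}$ attains its maximum over $\cD$ at $\bar\rho$ for $d<2(k-1)\ln(k-1)$.

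But $f_{d,\infty}$ is exactly the second-moment rate function for \emph{proper} balanced $k$-colourings: with $c_\beta=1$ the energy bracket reads $1-\tfrac2k+\tfrac{\|\rho\|_2^2}{k^2}=\pr[\,\sigma(u)\ne\sigma(v)\ \wedge\ \tau(u)\ne\tau(v)\,]$ for a uniformly random edge $\{u,v\}$ and balanced $\sigma,\tau$ with overlap matrix $\rho$, so $f_{d,\infty}$ coincides with the function analysed by Achlioptas and Naor. Their argument~\cite{AchNaor} --- which, after a Lagrange-multiplier computation exploiting the concavity of the entropy and the symmetry of $\cD$ under row and column permutations, reduces the maximisation to the one-parameter segment $\rho_\alpha=\alpha P+(1-\alpha)\bar\rho$ from $\bar\rho$ to a permutation matrix $P$, and there to a single-variable calculus inequality --- shows precisely that the maximum is at $\bar\rho$ for all $d\le2(k-1)\ln(k-1)$. (As a reality check: on this segment $\|\rho_\alpha-\bar\rho\|_2^2=\alpha^2(k-1)$ and, near $\alpha=0$, $H(k^{-1}\rho_\alpha)=2\ln k-\tfrac12\alpha^2(k-1)+O(\alpha^3)$, so $f_{d,\infty}(\rho_\alpha)-f_{d,\infty}(\bar\rho)=\tfrac{\alpha^2(k-1)}2\bigl(\tfrac d{(k-1)^2}-1\bigr)+O(\alpha^3)<0$ because $d\le2(k-1)\ln(k-1)<(k-1)^2$; and at $\alpha=1$ the required inequality $\ln k\ge\tfrac d2\ln\tfrac k{k-1}$ holds with room, since $\tfrac d2\ln\tfrac k{k-1}\le(k-1)\ln(k-1)\cdot\tfrac1{k-1}=\ln(k-1)$.) This completes the proof.

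The genuinely hard part sits inside the cited result~\cite{AchNaor}: the reduction of the entropy of a doubly stochastic matrix of prescribed Frobenius norm to the one-parameter family $\{\rho_\alpha\}$ --- i.e.\ ruling out that some other critical configuration of the Birkhoff polytope (an average of several disjoint permutation matrices, or an asymmetric two-value pattern consistent with the Lagrange conditions) carries strictly more entropy at a given norm. It is there that the sharp constant $2(k-1)\ln(k-1)$ is forced; a crude bound such as $\ln(1+x)\le x$ applied to the energy term alone would reach only $d=O(k)$. Given that input, the contribution at this stage is just the $c_\beta$-monotonicity reduction above together with the routine second-moment packaging.
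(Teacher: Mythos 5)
Your argument is correct, but it does not follow the route the paper attributes to this theorem. The paper itself offers no proof of Theorem~\ref{Thm_CDGS}: it is quoted from~\cite{CDGS}, and the surrounding discussion explains that~\cite{CDGS} prove it by redoing the Achlioptas--Naor relaxation to \emph{singly} stochastic matrices for the finite-$\beta$ function $f_{d,\beta}$ directly. You instead strip out the temperature first: writing $\|\rho\|_2^2=1+t$ with $t=\|\rho-\bar\rho\|_2^2\ge0$ and observing that $c\mapsto c/(k-c)$ is increasing, you get $f_{d,\beta}(\rho)-f_{d,\beta}(\bar\rho)\le f_{d,\infty}(\rho)-f_{d,\infty}(\bar\rho)$ for every $\rho\in\cD$, and then invoke the zero-temperature maximization result of~\cite{AchNaor} verbatim. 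This is exactly the mechanism of the paper's own Lemma~\ref{lem:monotony} (which the paper deploys only in the harder regimes, e.g.\ in Corollary~\ref{cor:MaxCases}), so your proof is in effect a shorter derivation of the~\cite{CDGS} theorem from the~\cite{AchNaor} optimization plus the paper's monotonicity trick; what it buys is that one never has to re-run the singly-stochastic analysis for finite $\beta$, at the price of black-boxing the genuinely hard entropy-versus-norm optimization inside the citation. The second-moment packaging (Paley--Zygmund plus Fact~\ref{lem:logconc} plus Jensen) matches the paper's proofs of Corollary~\ref{cor:energy} and Proposition~\ref{prop:maxlnkh0}. One caution: your parenthetical description of the Achlioptas--Naor argument as a reduction to the segment $\alpha P+(1-\alpha)\bar\rho$ is not what they actually do (they relax to $\cS$ and solve a maximum-entropy-at-fixed-norm problem row by row); since you explicitly defer that step to the citation this does not create a gap, but the characterization should not be presented as the content of their proof.
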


Comparing this result with~(\ref{eqkcol}), we see that \Thm~\ref{Thm_CDGS} applies to degrees about an additive $\ln k$
below the $k$-colorability threshold.
The proof of \Thm~\ref{Thm_CDGS} builds upon ideas of Achlioptas and Naor~\cite{AchNaor}.
More precisely, solving the maximization problem from (\ref{eq:2mmoptbal}) directly emerges to be surprisingly difficult.
Hence, Achlioptas and Naor suggested to enlarge the domain to the set of {\em singly} stochastic matrices.
Clearly, the maximum over the larger space is an upper bound on the maximum over the set of doubly-stochastic matrices.
Further, because the set of singly-stochastic matrices is a product of simplices, the relaxed optimization problem can be tackled with a fair bit of
technical work.
Crucially, for $d<2(k-1)\ln(k-1)$ the maximum of the relaxed problem is attained at $\bar\rho$.
However, for only slightly larger values of $d$ the maximum is attained at a different point, and thus the relaxed second moment argument fails.

Apart from the case of small $d$, the second case that is relatively straightforward is that of small $\beta$
	(the ``high temperature'' case in physics jargon).
More precisely, in \Sec~\ref{sec:singlystoch} we will prove the following.

\begin{proposition}
	\label{prop:maxlnkh0}
	If $d\in[ 2(k-1)\ln(k-1),(2k-1)\ln k-2]$ and $\beta\le \ln k$, then (\ref{eqThm_main}) holds.
\end{proposition}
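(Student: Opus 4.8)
The plan is to verify the necessary and (by the general machinery of~\cite{Danny,AchNaor,CDGS} recalled in the outline) sufficient condition that $f_{d,\beta}$ attains its maximum over $\cDb$ at the barycenter $\bar\rho=k^{-1}\vecone$, for the stated range of $d$ and $\beta$. Following Achlioptas and Naor, I would first pass to the relaxed problem of maximizing $f_{d,\beta}$ over the larger set $\cS$ of singly-stochastic matrices, whose advantage is that $\cS$ is a product of $k$ simplices, so a row-by-row analysis becomes possible. Concretely, for each fixed row $i$ the contribution to $H(k^{-1}\rho)$ is (up to the additive $\ln k$) the entropy of the probability vector $\rho_i$, while the energy term $E(\rho)$ depends on the rows only through $\|\rho\|_2^2=\sum_i\|\rho_i\|_2^2$. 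Using the derivative formulas \eqref{eq:derivH}--\eqref{eq:derivE}, the stationarity condition on row $i$ forces $\rho_{ij}$ to depend on $\rho_i$ only through $\|\rho_i\|_2^2$ and $\rho_{ij}$ itself, which (exactly as in~\cite{AchNaor,CDGS}) means each row at a maximizer takes at most two distinct values. This reduces the optimization to a low-dimensional problem over such ``two-value'' rows.

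Next I would exploit the high-temperature hypothesis $\beta\le\ln k$, i.e.\ $c_\beta=1-\exp(-\beta)\le 1-1/k$, to control the size of the energy term and hence damp the incentive to make the rows concentrated. The point is that in the maximization over $\cS$, making a row close to a $0/1$ vector gains at most $\ln k$ from the entropy term but, since $c_\beta\le 1-1/k$ keeps the factor $\|\rho\|_2^2c_\beta^2/k^2$ bounded away from the regime where the logarithm in $E(\rho)$ can compensate, the energy gain from concentrating a row is of order $O(d/k^2)$ per row, which is $o(\ln k)$ because $d\le(2k-1)\ln k-2=O(k\ln k)$ and so $d/k^2=O(\ln k/k)$. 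I would make this quantitative by plugging the two-value rows into $f_{d,\beta}$, writing the resulting one-parameter (per row) function explicitly, and showing via a second-derivative / convexity estimate — in the spirit of the function $h(z)+z\ln|I|+(1-z)\ln(k-|I|)$ from Fact~\ref{fact:entropy} — that for $\beta\le\ln k$ its unique maximum over the relevant range is at the uniform row $\rho_i=k^{-1}\vecone$. Since this holds row by row and the only coupling is through $\|\rho\|_2^2$ entering $E$ monotonically, the global maximum over $\cS$ is at $\bar\rho$, and a fortiori the maximum over $\cDb\subseteq\cS$ is at $\bar\rho$ as well.

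Finally, with $\max_{\rho\in\cDb}f_{d,\beta}(\rho)=f_{d,\beta}(\bar\rho)=\frac2n\ln\Erw[\Zkb{\G}]+o(1)$ established, I would invoke \eqref{eq:2mmoptbal} to get $\frac1n\ln\Erw[\Zkb{\G}^2]\sim\frac2n\ln\Erw[\Zkb{\G}]$, i.e.\ the second moment of $\Zkb{\G}$ matches the square of its first moment up to a sub-exponential factor. Combined with the Paley--Zygmund inequality this yields $\Zkb{\G}\ge\exp(nf_{d,\beta}(\bar\rho)-o(n))$ with probability bounded away from zero, and then Fact~\ref{lem:logconc} (concentration of $\ln\Zbg{\G}$, noting $\Zbg{\G}\ge\Zkb{\G}$) upgrades this to $\whp$ and in expectation; together with the first-moment upper bound from \Prop~\ref{prop:firstmoment} and the computation of $f_{d,\beta}(\bar\rho)$ this gives \eqref{eqThm_main}. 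I expect the main obstacle to be the quantitative two-value-row estimate: one has to rule out maximizers with rows split unevenly into blocks (the analogue of the set $I$ in Fact~\ref{fact:entropy}) for \emph{all} block sizes $1\le|I|\le k-1$ simultaneously, and the $\beta\le\ln k$ bound has to be used carefully to beat the worst such split — presumably $|I|$ of size $1$ or close to $k/2$ — uniformly in $d$ over the whole interval $[2(k-1)\ln(k-1),(2k-1)\ln k-2]$.
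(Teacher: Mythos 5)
Your overall strategy is the same as the paper's: relax the optimization to the set $\cS$ of singly-stochastic matrices, argue row by row that the maximizer of $f_{d,\beta}$ is $\bar\rho$, and then conclude via \eqref{eq:2mmoptbal}, Paley--Zygmund and Fact~\ref{lem:logconc}. The final transfer step is fine (modulo the remark that Paley--Zygmund only gives probability $\exp(-o(n))$, not probability bounded away from zero, which is still enough against the exponential concentration of Fact~\ref{lem:logconc}).

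However, the central quantitative claim in your second paragraph is wrong, and it is exactly the point where the proof is delicate. You assert that concentrating a row gains at most $\ln k$ in entropy while the energy gain is $O(d/k^2)=O(\ln k/k)$, i.e.\ smaller by a factor $k$. But the entropy term in $f_{d,\beta}$ is $H(k^{-1}\rho)=\ln k+\frac1k\sum_iH(\rho_i)$, so collapsing one row costs only about $\frac{\ln k}{k}$ in $f$; meanwhile it increases $\|\rho\|_2^2$ by roughly $1$, and since $\partial E/\partial\|\rho\|_2^2=\frac{d}{2k^2}c_\beta^2(1+O_k(1/k))\approx\frac{\ln k}{k}c_\beta^2$ for $d\approx 2k\ln k$, the energy gain is \emph{also} $\Theta(\ln k/k)$ per row. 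The two effects are of the same order, and the inequality $f_{d,\beta}(\rho)<f_{d,\beta}(\bar\rho)$ holds only by a relative margin of order $1/k$, coming precisely from $c_\beta^2\le(1-1/k)^2=1-2/k+O(k^{-2})$ when $\beta\le\ln k$, together with the negative second-order term in the expansion of $\ln(1+x)$. Indeed, if your order-of-magnitude argument were correct it would prove the maximum is at $\bar\rho$ for \emph{all} $\beta$, which is false for $d$ near $(2k-1)\ln k$ (this is why the paper needs the separability machinery of Sections 4--5 for $\beta\ge\ln k$). The paper's Lemma~\ref{lem:barmax} carries out exactly this tight comparison via a second-order Mercator expansion, and the reduction to the candidate maximizers $\rho_s$ requires the structural Lemmas~\ref{lem:smax}--\ref{lem:FormMaxMatrix} (in particular ruling out stationary rows with a large entry other than $\approx1-1/k$, using the non-monotone function of Fact~\ref{lem:xi}, and handling entries near $1/2$ separately). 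Your plan gestures at ``plugging in two-value rows and a convexity estimate,'' but as written the justification for why $\beta\le\ln k$ suffices does not survive the correct bookkeeping; you must exhibit the $\Theta(1/k)$ relative margin explicitly.
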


For $d\in[ 2(k-1)\ln(k-1),(2k-1)\ln k-2]$ \Prop~\ref{prop:maxlnkh0} improves upon the result from~\cite{CDGS},
which yields (\ref{eqThm_main}) merely for $\beta\leq\beta_0$ for an absolute constant $\beta_0$ (independent of $k$).
The proof of \Prop~\ref{prop:maxlnkh0} is by way of relaxing (\ref{eq:2mmopt}) to singly-stochastic matrices as well
and builds upon arguments developed in~\cite{Danny} for $k$-colorability.

\subsection{Large degree and low temperature}
\label{subsec:highdeglowtemp}
The most challenging constellation is that of $d$ beyond $2(k-1)\ln(k-1)$ and $\beta$ large.
In this regime we do not know how to solve the maximization problem (\ref{eq:2mmopt}).
In particular, the trick of relaxing the problem to the set of all singly-stochastic matrices does not work.
Instead, following~\cite{Danny} we are going {\em add} further constraints to the problem.
That is, we are going to apply the second moment method to a modified random variable that is constructed
so as to ensure that certain parts of the domain $\cD$ cannot contribute to (\ref{eq:2mmopt}) significantly.

The construction is guided by the physics prediction~\cite{pnas} that for large $d$ and $\beta$ the Gibbs measure $\mu_{\G}$
``decomposes'' into an exponential number of well-separated clusters.
Of course, it would be non-trivial to turn this notion into a precise mathematical statement because the support of $\mu_{\G}$ is the entire cube $[k]^n$.
However, the probability mass is expected to be distributed very unevenly, with large swathes of the cube carrying very little mass.

Fortunately, we do not need to define clusters etc.\ precisely.
Instead, adapting the construction from~\cite{Danny}, we just define a new random variable $\Zkg{\G}$ that comes with a ``hard-wired''
 notion of well-separated clusters.
To be precise, for a graph $G$ denote by $\Sigma_{G,\beta}$ the set of all $\tau\in\cBd$ that enjoy the following property.
\begin{description}
\item[SEP1] 
	for every $i\in[k]$ the set
	$\tau^{-1}(i)$ spans at most $2n\exp(-\beta)k^{-1}\ln k$ edges.
\end{description}
Further, let $\kappa=\ln^{20}k/k$.
We call $\sigma\in\cBd$ {\em separable} if $\sigma\in\Sigma_{G,\beta}$ and if
\begin{description}
\item[SEP2] 
	for every $\tau\in \Sigma_{G,\beta}$ and all $i,j\in[k]$
	such that  $\rho_{ij}(\sigma,\tau)\ge 0.51$ we have $\rho_{ij}(\sigma,\tau)\ge 1-\kappa$.
\end{description}
Let $\cB_{sep}=\cB_{sep}(G,\beta)\subset\cB$ denote the set of all separable maps and define
	\begin{align*}
	\Zkg{\G}&=\sum_{\sigma\in\cB_{sep}(\G,\beta)}\exp(-\beta\cH_{\G}(\sigma)).
	\end{align*}

To elaborate, condition {\bf SEP1} provides that the subgraphs induced on the individual color classes are quite sparse.
Indeed, recalling that each monochromatic edge incurs a ``penalty factor'' of $\exp(-\beta)$, we expect that in a typical sample from the Gibbs
measure the total number of monochromatic edges is about $nd\exp(-\beta)/(2k)$.
Moreover, suppose that $\sigma\in\Sigma_{G,\beta}$ satisfies {\bf SEP2} and $\tau\in\Sigma_{G,\beta}$ is another color assignment.
Let $i,j\in[k]$.
Then {\bf SEP2} provides that there are only two possible scenarios.
\begin{enumerate}
\item[(i)] If $\rho_{ij}(\sigma,\tau)<0.51$, then the color classes $\sigma^{-1}(i)$, $\tau^{-1}(j)$ are ``quite distinct'' and we may think of
	$\sigma,\tau$ as belonging to different ``clusters''.
\item[(ii)] If $\rho_{ij}(\sigma,\tau)\geq0.51$, then in fact $\rho_{ij}(\sigma,\tau)\geq1-\kappa$.
	Thus, the color classes $\sigma^{-1}(i)$, $\tau^{-1}(j)$ are nearly identical.
	Hence, if there is a permutation $\pi:[k]\to[k]$ such that $\rho_{i\pi(i)}(\sigma,\tau)\geq0.51$ for all $i\in[k]$,
	then we may think of $\sigma,\tau$ as belonging to the same ``cluster''.
\end{enumerate}
The upshot is that separability rules out the existence of any ``middle ground'', i.e., we do not have to consider overlaps $\rho$
with entries $\rho_{ij}\in(0,51,1-\kappa)$.

The following proposition, which we prove in \Sec~\ref{sec:sep1mm}, shows that imposing separability has no discernible effect on the first moment.

\begin{proposition}\label{prop:expsepbal}
Assume that $d\in[2(k-1)\ln (k-1),(2k-1)\ln k-2]$ and $\beta\ge \ln k$.
Then
	\begin{align*}
	\Erw[\Zkn{\G}]\sim \Erw[\Zkb{\G}]. 
	\end{align*}
\end{proposition}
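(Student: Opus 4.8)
The plan is to show that the expected loss incurred by restricting from $\cBd$ to $\cB_{sep}(\G,\beta)$ is negligible, i.e.\ that $\Erw[\Zkb{\G}]-\Erw[\Zkn{\G}]=o(\Erw[\Zkb{\G}])$. Since $\Zkn{\G}\le\Zkb{\G}$ always, it suffices to bound $\Erw[\sum_{\sigma\in\cBd\setminus\cB_{sep}}\exp(-\beta\cH_{\G}(\sigma))]$. We split the non-separable $\sigma$ into two groups: those that already fail \textbf{SEP1}, and those that satisfy \textbf{SEP1} but fail \textbf{SEP2}. By symmetry over the $n!$ ways of labelling vertices, for a fixed balanced $\sigma$ we have $\Erw[\exp(-\beta\cH_{\G}(\sigma))\mathbbm 1\{\sigma\notin\cB_{sep}\}]=\Erw[\exp(-\beta\cH_{\G}(\sigma))\,;\,\sigma\notin\cB_{sep}]$ and we want to show this is $o\bc{\Erw[\exp(-\beta\cH_{\G}(\sigma))]}$ uniformly; summing over the $\Theta(k^n)$ balanced $\sigma$ then gives the claim via \eqref{eq:balO}.

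The first group is handled by a direct union bound. Fix $\sigma\in\cBd$ and $i\in[k]$; the number of edges of $\G$ inside $\sigma^{-1}(i)$ is, conditionally on the total edge count $m$, hypergeometric, and since $\binom{|\sigma^{-1}(i)|}{2}\sim\binom{n/k}{2}$ its mean is $\sim m/k^2\sim nd/(2k^2)$. The threshold in \textbf{SEP1} is $2n\exp(-\beta)k^{-1}\ln k$; since $\beta\ge\ln k$ we have $\exp(-\beta)\le 1/k$, so the threshold is $\le 2n\ln k/k^2$, which is only a factor $O(\ln k/d)=O(1/k)$ times the mean — wait, that is the wrong direction. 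The point is rather that the \emph{energy reweighting} $\exp(-\beta\cH_{\G}(\sigma))$ already pushes the conditional law of the monochromatic edge count down: under the tilted measure proportional to $\exp(-\beta\cH_{\G}(\sigma))$, the number of edges inside each class is dominated by a binomial with mean $\sim nd\exp(-\beta)/(2k)\le nd/(2k^2)$, and the \textbf{SEP1} threshold is a constant factor ($\approx 4/d\cdot k\ln k\ge$ a large constant) times this mean. So Lemma~\ref{lem:chernoff} (Chernoff) gives that $\pr_{\mathrm{tilted}}[\sigma^{-1}(i)\text{ spans}>2n\exp(-\beta)k^{-1}\ln k\text{ edges}]\le\exp(-\Omega(n\exp(-\beta)k^{-1}\ln k\cdot\ln\ln k))$, hence $\Erw[\exp(-\beta\cH_{\G}(\sigma))\,;\,\sigma\text{ fails \textbf{SEP1}}]\le k\exp(-\Omega(n))\cdot\Erw[\exp(-\beta\cH_{\G}(\sigma))]$, which is more than enough.

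The second group — $\sigma$ satisfying \textbf{SEP1} but some $\tau\in\Sigma_{\G,\beta}$ with $\rho_{ij}(\sigma,\tau)\in(0.51,1-\kappa)$ for some $i,j$ — is the main obstacle, because it is really a two-coordinate (pair) estimate disguised inside a first-moment bound. The strategy, following \cite{Danny}, is: first, $\rho_{ij}(\sigma,\tau)\ge 0.51$ for a single pair $(i,j)$ forces, via balancedness, a near-permutation structure — for each $i$ there is at most one $j$ with $\rho_{ij}\ge 0.51$, and if $\rho_{ij}(\sigma,\tau)\ge 0.51$ then the symmetric difference $|\sigma^{-1}(i)\triangle\tau^{-1}(j)|$ is governed by how far $\rho_{ij}$ is from $1$. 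So one bounds $\Erw[\sum_{\sigma,\tau}\exp(-\beta(\cH_{\G}(\sigma)+\cH_{\G}(\tau)))]$ restricted to pairs whose overlap has an entry in the ``forbidden band'' $(0.51,1-\kappa)$ and with $\tau$ satisfying \textbf{SEP1}, using Proposition~\ref{prop:exppairs}: this expected pair-count is $\exp(nf_{d,\beta}(\rho)+o(n))$ summed over such $\rho$, and one must show $f_{d,\beta}(\rho)<2\cdot\frac1n\ln\Erw[\Zkb{\G}]=2(\ln k+\frac d2\ln(1-c_\beta/k))+o_k(1)$ for every doubly-stochastic $\rho$ with an entry in $(0.51,1-\kappa)$. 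Here one invokes the \textbf{SEP1} constraint on $\tau$ (and on $\sigma$) to get an improved energy bound: sparsity of the monochromatic subgraphs restricts the effective geometry and lets one beat the trivial bound $f_{d,\beta}(\rho)\le\max_{\cDb}f_{d,\beta}$. Then, since $\Erw[\Zkb{\G}]\ge\exp(\Omega(n))$, Markov's inequality on the pair random variable transfers this to $\Erw[\sum_{\sigma\text{ fails \textbf{SEP2}}}\exp(-\beta\cH_{\G}(\sigma))]=o(\Erw[\Zkb{\G}])$: indeed, if $\sigma$ fails \textbf{SEP2} via some $\tau$, then the (random) quantity $\sum_{\sigma\text{ bad}}\exp(-\beta\cH_{\G}(\sigma))\cdot|\{\tau\in\Sigma_{\G,\beta}:\rho(\sigma,\tau)\text{ has a forbidden entry}\}|$ has small expectation and dominates $\sum_{\sigma\text{ bad}}\exp(-\beta\cH_{\G}(\sigma))$ whenever the inner set is nonempty. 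The delicate computation is the analytic inequality $f_{d,\beta}(\rho)<2\cdot\frac1n\ln\Erw[\Zkb{\G}]-\Omega_k(1)$ on the band, which is where the hypothesis $d\le(2k-1)\ln k-2$ and the asymptotics in $k$ are used, and which mirrors the corresponding separation estimate in \cite{Danny}; I expect to carry it out by the same convexity-plus-entropy bookkeeping, using Fact~\ref{fact:entropy} to bound $H(k^{-1}\rho)$ on matrices whose rows have a heavy entry, and \eqref{eqForb0}-type estimates for the energy term under \textbf{SEP1}.
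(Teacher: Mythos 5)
Your reduction to bounding $\Erw[\sum_{\sigma\in\cBd\setminus\cB_{sep}}\exp(-\beta\cH_{\G}(\sigma))]$ and your treatment of \textbf{SEP1} are sound and essentially coincide with the paper: the measure tilted by $\exp(-\beta\cH_{\G}(\sigma))$ is (up to $\Theta(1)$ factors) exactly the planted model of \Sec~\ref{sec:sep1mm}, and the paper also kills \textbf{SEP1} by a Chernoff bound there. The genuine gap is in your handling of \textbf{SEP2}, on two counts. First, the transfer from the pair estimate back to the weighted count of bad $\sigma$ fails. For the pair sum to dominate $\sum_{\sigma\ \mathrm{bad}}\exp(-\beta\cH_{\G}(\sigma))$ you would need $\sum_{\tau}\exp(-\beta\cH_{\G}(\tau))\geq1$ over the witnesses $\tau$, but membership in $\Sigma_{\G,\beta}$ only guarantees $\cH_{\G}(\tau)\le 2n\eul^{-\beta}\ln k$, so a single witness may contribute as little as $\exp(-2\beta n\eul^{-\beta}\ln k)$, which at $\beta=\ln k$ is $\exp(-2n\ln^2k/k)$. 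The margin available from any entropy--energy inequality on the forbidden band is far smaller: already the adjacent estimate of \Lem~\ref{lem:NoHalfEntries} yields only $\ln k/(5k)$ per unit $n$, and at $d$ near $d_\star$ the gap at the $k$-stable point is merely $\Theta(k^{-3/2})$ (see the $(c-2)/(2k)$ computation in \Lem~\ref{lem:MaxCases4}). So the exponential loss from the witnesses' Gibbs weights cannot be absorbed. Second, the analytic inequality you defer --- $f_{d,\beta}(\rho)<f_{d,\beta}(\bar\rho)-\Omega_k(1)$ for every doubly stochastic $\rho$ with an entry in $(0.51,1-\kappa)$ --- is not routine bookkeeping: it sits immediately next to the unconstrained maximization over $\cD$ that the authors explicitly say they cannot solve for $d>2(k-1)\ln(k-1)$ and large $\beta$, and the entire purpose of the separability device is to never have to optimize $f_{d,\beta}$ on that band.

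The paper's proof avoids both problems by never counting witnesses at all. \Lem~\ref{lem:contig} shows that any event of probability $o(n^{-1/2})$ in the planted model contributes $o(\Erw[\Zkb{\G}])$ to the first moment, and \Prop~\ref{prop:sepbal} shows that in the planted model, with probability $1-o(n^{-1/2})$, \emph{no} $\tau\in\Sigma_{\hat\G,\beta}$ has an overlap entry with $\hat\SIGMA$ in $(0.51,1-\kappa)$. The latter is a deterministic consequence of expansion-type properties of the planted graph (\Lem~\ref{lem:L1}, \Lem~\ref{lem:L2} and Claim~\ref{claim:smalledgespan}: few common non-neighbors of large subsets of a color class, few vertices with few neighbors in a class, and sparsity of small subgraphs), combined with \textbf{SEP1} for $\tau$; no optimization of $f_{d,\beta}$ enters. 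To repair your outline you need precisely this ingredient: a planted-model transfer plus a combinatorial exclusion of witnesses, in place of a second-moment bound over the forbidden band.
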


The point of working with separable color assignments is that the maximization problem that arises in the second moment computation of $\Zkg{\G}$
comes with further constraints that are not present in~(\ref{eq:2mmopt}).
Specifically, we only need to optimize over $\rho\in\cD$ such that $\rho_{ij}\not\in(0.51,1-\kappa)$ for all $i,j\in[k]$.
In \Sec~\ref{sec:doublystoch} we will use these constraints to derive the following.

\begin{proposition}\label{prop:smm}
Let $d\in[2(k-1)\ln (k-1),d_\star]$ and $\beta\ge \ln k$.
Then
	$
	\frac{1}{n}\ln\Erw[\Zkn{\G}^2]\sim \frac{2}{n}\ln\Erw[\Zkb{\G}].
	$
\end{proposition}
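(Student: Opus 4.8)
The plan is to prove the two matching bounds separately. The lower bound is immediate: by the Cauchy--Schwarz inequality $\Erw[\Zkn{\G}^2]\ge\Erw[\Zkn{\G}]^2$, and $\Erw[\Zkn{\G}]\sim\Erw[\Zkb{\G}]$ by \Prop~\ref{prop:expsepbal} (which applies since $d_\star<(2k-1)\ln k-2$), so that $\frac1n\ln\Erw[\Zkn{\G}^2]\ge\frac2n\ln\Erw[\Zkb{\G}]-o(1)$. Everything thus hinges on the matching upper bound.

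I would obtain the upper bound by reducing to an optimisation over the Birkhoff polytope. Writing out the square,
\[
\Erw[\Zkn{\G}^2]=\sum_{\sigma,\tau\in\cB}\Erw\bigl[\mathbbm{1}\{\sigma,\tau\in\cB_{sep}(\G,\beta)\}\exp(-\beta(\cH_{\G}(\sigma)+\cH_{\G}(\tau)))\bigr],
\]
and observing that $\sigma,\tau\in\cB_{sep}(\G,\beta)$ implies $\sigma$ separable and $\tau\in\Sigma_{\G,\beta}$, so that \textbf{SEP2} forces $\rho_{ij}(\sigma,\tau)\notin(0.51,1-\kappa)$ for all $i,j$ --- a property of the pair $(\sigma,\tau)$ that does not involve $\G$. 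Hence we may drop the indicator provided we keep only pairs whose overlap avoids the forbidden interval. Grouping as in (\ref{eq:balancedpairs}) and invoking \Prop~\ref{prop:exppairs}, the bound $|\cRb|\le n^{k^2}$, and the continuity of $f_{d,\beta}$ (overlap matrices being asymptotically doubly-stochastic, as in the plain second moment computation), we get
\[
\tfrac1n\ln\Erw[\Zkn{\G}^2]\ \le\ \max\bigl\{f_{d,\beta}(\rho):\rho\in\cDb,\ \rho_{ij}\notin(0.51,1-\kappa)\ \forall i,j\bigr\}+o(1).
\]
Since $\bar\rho=k^{-1}\vecone$ is feasible and $f_{d,\beta}(\bar\rho)=\lim_n\frac2n\ln\Erw[\Zkb{\G}]$, it remains to prove the sharp statement that this constrained maximum is attained exactly at $\bar\rho$.

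This is the content of \Sec~\ref{sec:doublystoch}. The structural point is that a doubly-stochastic $\rho$ whose entries all avoid $(0.51,1-\kappa)$ is nearly block-diagonal: the positions with $\rho_{ij}\ge1-\kappa$ form a partial permutation (two in one row or column would sum above $1$) of some size $t$ (with $t=k-1$ impossible), so after relabelling rows and columns $\rho$ consists of a near-identity $t\times t$ block, a residual $(k-t)\times(k-t)$ block within $O(k\kappa)$ of doubly-stochastic and with all entries $\le0.51$, and $O(k\kappa)$-small cross entries. Using $H(k^{-1}\rho)=\ln k+\tfrac1kH(\rho)$, bounding the residual block's entropy by its unconstrained maximum $(k-t)\ln(k-t)$ and the near-$1$ and cross entries by $O(k\kappa\ln(1/\kappa))$, together with $\|\rho\|_2^2=t+\|\text{block}\|_2^2+O(k\kappa)$ and $\|\text{block}\|_2^2\le0.51(k-t)$, one compares $f_{d,\beta}(\rho)$ with a one-parameter family in $t\in\{0,1,\dots,k\}$ interpolating between $\bar\rho$ $(t=0)$ and a permutation matrix $(t=k)$, and shows that for $\beta\ge\ln k$ (so that $c_\beta\ge1-1/k$) and $d\le d_\star$ the maximum over this family is the value at $t=0$. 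The intermediate $t$ follow by monotonicity of the family, and the endpoint $t=k$ reduces to $\ln k>\tfrac d2\ln\tfrac k{k-1}$, which holds once $d\le(2k-1)\ln k-2-k^{-1/2}$ thanks to $-\ln(1-1/k)\le\tfrac1k+\tfrac1{2k(k-1)}$.

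I expect the main obstacle to be this optimisation in the regime where $t$ is close to $k$, i.e.\ near a permutation matrix. There the gap $f_{d,\beta}(\bar\rho)-f_{d,\beta}(\text{permutation})$ is only $\Theta(1/k)$, while separability still allows each row of $\rho$ up to $\kappa$ of off-permutation mass; in particular $f_{d,\beta}$ is not even locally maximised at a permutation matrix, since perturbing one inside the feasible set strictly decreases $\|\rho\|_2^2$ (hence the energy term $E_{d,\beta}$) while increasing the entropy, both by $\mathrm{polylog}(k)/k$. A clean bound therefore cannot estimate the two terms of $f_{d,\beta}$ independently but must exploit the exact trade-off between $H(k^{-1}\rho)$ and $\|\rho\|_2^2$ for such $\rho$; it is precisely to keep the resulting slack strictly below the $\Theta(1/k)$ margin that the corrections $-2-k^{-1/2}$ in the definition of $d_\star$, and the choice $\kappa=\ln^{20}k/k$, are made.
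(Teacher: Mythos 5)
Your reduction of the second moment to the constrained optimisation is the same as the paper's: \textbf{SEP2} forces the overlap of any pair of separable assignments to avoid $(0.51,1-\kappa)$ entrywise, so by \Prop~\ref{prop:exppairs} and the polynomial bound on $|\cRb|$ everything comes down to showing $\max_{\rho\in\Dg}f_{d,\beta}(\rho)=f_{d,\beta}(\bar\rho)$ (this is \Prop~\ref{prop:maxsep}); your Cauchy--Schwarz lower bound together with \Prop~\ref{prop:expsepbal} is also fine. The gap is in your sketch of the optimisation itself. Bounding the residual $(k-t)\times(k-t)$ block's entropy by its unconstrained maximum $(k-t)\ln(k-t)$ \emph{and simultaneously} its Frobenius norm by $0.51(k-t)$ is far too lossy, because these two extremes are incompatible: the uniform block has squared norm $1$, not $0.51(k-t)$. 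Already at $t=0$ your bound reads $f_{d,\beta}(\rho)\le 2\ln k+\frac d2\ln\brk{1-\frac{2c_\beta}k+\frac{0.51c_\beta^2}{k}}\approx 2\ln k-1.49\ln k$, i.e.\ it exceeds $f_{d,\beta}(\bar\rho)$ (which by \eqref{eq:fbarrhofin} is only $\Theta(1/k)$) by $\Theta(\ln k)$. So the independent estimation of $H$ and $E$ fails not only near $t=k$, as you suggest, but for every $t$ bounded away from $k$; the $t=0$ case \emph{is} the full doubly-stochastic optimisation with a $0.51$ cap on the entries, which is exactly the hard problem the paper does not re-derive but imports from \cite{Danny} as \Prop~\ref{Prop_Danny} (at $\beta=\infty$, $d=(2k-1)\ln k-2\ln2$, for all $s$-stable matrices with $s<k$) and then transfers to finite $\beta\ge\ln k$ and $d\le d_\star$ via the two monotonicity statements, \Lem~\ref{lem:monotony} and \Lem~\ref{lem:monotonyd}. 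Without an argument of that calibre (in \cite{Danny} it occupies the singly-stochastic relaxation machinery of an entire section) your intermediate-$t$ ``monotonicity of the family'' claim has nothing to rest on.

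Two smaller points. First, your endpoint analysis at $t=k$ compares against the exact permutation matrix, where the margin is indeed governed by $\ln k>\frac d2\ln\frac k{k-1}$; but the binding comparison in the $k$-stable case is against $\rhos=(1-1/k)\matid+k^{-2}\vecone$, for which $f_{d,\infty}(\rhos)=\frac1k+\frac c{2k}+O_k(\ln k/k^2)$ versus $f_{d,\infty}(\bar\rho)=\frac ck+O_k(\ln k/k^2)$ with $d=(2k-1)\ln k-c$; this is \Lem~\ref{lem:MaxCases4} and is what actually forces $c>2$, i.e.\ the $-2-k^{-1/2}$ in $d_\star$. You correctly sense this, but your family parametrised by $t$ with identity blocks never sees it. Second, your parenthetical claim that $t=k-1$ is impossible does not hold here: since $(k-1)\kappa=\Theta(\ln^{20}k)\gg1$, the last row can have its remaining mass spread over entries $\le\kappa$ with a diagonal entry $\le 0.51$, so no contradiction with double stochasticity arises.
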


\begin{corollary}
	\label{cor:energy}
	If $d\in[ 2(k-1)\ln(k-1),d_{\star}]$ and $\beta\ge \ln k$, then (\ref{eqThm_main}) holds.
\end{corollary}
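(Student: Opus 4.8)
The plan is to derive both equalities in \eqref{eqThm_main} from the first moment estimate (\Prop~\ref{prop:firstmoment}), the two second moment bounds for the separable partition function (\Prop~\ref{prop:expsepbal} and \Prop~\ref{prop:smm}), and the concentration estimate of Fact~\ref{lem:logconc}, along the standard ``second moment plus Azuma'' route. Write $L=\ln k+\frac d2\ln(1-c_\beta/k)$ for the right-hand side of \eqref{eqThm_main}. First note that the hypotheses $d\in[2(k-1)\ln(k-1),d_\star]$, $\beta\ge\ln k$ of the corollary are compatible with those of \Prop~\ref{prop:expsepbal} (because $d_\star\le(2k-1)\ln k-2$) and coincide with those of \Prop~\ref{prop:smm}. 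The upper bound is immediate: since $m=\lceil dn/2\rceil$, \Prop~\ref{prop:firstmoment} gives $\tfrac1n\ln\Erw[\Zbg{\G}]\to L$, which is already the second equality in \eqref{eqThm_main}, and then Jensen's inequality $\Erw[\ln\Zbg{\G}]\le\ln\Erw[\Zbg{\G}]$ yields $\limsup_n\tfrac1n\Erw[\ln\Zbg{\G}]\le L$. Everything thus reduces to the matching lower bound $\liminf_n\tfrac1n\Erw[\ln\Zbg{\G}]\ge L$.

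For the lower bound I would use that $\Zbg{\G}\ge\Zkg{\G}\ge0$, since $\Zkg{\G}$ sums a subset of the nonnegative terms defining $\Zbg{\G}$, and apply the Paley--Zygmund inequality to $\Zkg{\G}$:
\[
\pr\big[\Zkg{\G}\ge\tfrac12\Erw[\Zkg{\G}]\big]\ \ge\ \frac{\Erw[\Zkg{\G}]^2}{4\,\Erw[\Zkg{\G}^2]}.
\]
By \eqref{eq:balO} we have $\tfrac1n\ln\Erw[\Zkb{\G}]\to L$; \Prop~\ref{prop:expsepbal} then upgrades this to $\tfrac1n\ln\Erw[\Zkg{\G}]\to L$, and \Prop~\ref{prop:smm} gives $\tfrac1n\ln\Erw[\Zkg{\G}^2]\to 2L$. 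Hence $\Erw[\Zkg{\G}]^2/\Erw[\Zkg{\G}^2]=\exp(o(n))$, so there is a sequence $\eta_n\to0$ with $\pr[\Zkg{\G}\ge\tfrac12\Erw[\Zkg{\G}]]\ge\exp(-\eta_n n)$ for all large $n$. Moreover, on the event $\{\Zkg{\G}\ge\tfrac12\Erw[\Zkg{\G}]\}$ one has $\tfrac1n\ln\Zbg{\G}\ge\tfrac1n\ln\Erw[\Zkg{\G}]-\tfrac{\ln 2}{n}\ge L-o(1)$.

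It then remains to convert this ``with probability $\exp(-o(n))$'' statement into one about $\Erw[\ln\Zbg{\G}]$, which is precisely what Fact~\ref{lem:logconc} is for. Fix $\delta>0$ and let $\eps=\eps(\delta/4,\beta,d)>0$ be as in Fact~\ref{lem:logconc}. Suppose toward a contradiction that $\tfrac1n\Erw[\ln\Zbg{\G}]\le L-\delta$ for some arbitrarily large $n$. For such $n$, taken large enough that the $o(1)$ above is below $\delta/2$, the event $\{\tfrac1n\ln\Zbg{\G}\ge L-\delta/2\}$ is contained in $\{\ln\Zbg{\G}-\Erw[\ln\Zbg{\G}]\ge\tfrac\delta2 n\}$ and hence in $\{|\ln\Zbg{\G}-\Erw[\ln\Zbg{\G}]|>\tfrac\delta4 n\}$, so Fact~\ref{lem:logconc} bounds its probability by $\exp(-\eps n)$; but the same event has probability at least $\exp(-\eta_n n)>\exp(-\eps n)$ as soon as $\eta_n<\eps$, a contradiction. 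Thus $\tfrac1n\Erw[\ln\Zbg{\G}]>L-\delta$ for all large $n$, and letting $\delta\to0$ gives $\liminf_n\tfrac1n\Erw[\ln\Zbg{\G}]\ge L$. Combined with the upper bound, both equalities in \eqref{eqThm_main} follow.

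I do not expect a genuine obstacle in this last deduction: all of the substance lives in \Prop~\ref{prop:expsepbal} and \Prop~\ref{prop:smm} (whose proofs are deferred to \Sec~\ref{sec:sep1mm} and \Sec~\ref{sec:doublystoch}), while the argument above is the routine ``second moment plus Azuma'' packaging. The only spot calling for a little care is the quantifier interplay in the last paragraph: one must ensure that the Paley--Zygmund lower bound $\exp(-\eta_n n)$ with $\eta_n\to0$ really dominates the fixed-rate Azuma tail $\exp(-\eps n)$, which forces one to pass to $n$ so large that $\eta_n<\eps$, and relatedly to feed Fact~\ref{lem:logconc} a deviation parameter ($\delta/4$ here) strictly smaller than the $\delta/2$ gap actually exploited.
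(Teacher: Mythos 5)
Your proposal is correct and follows essentially the same route as the paper: Jensen's inequality for the upper bound, then Paley--Zygmund applied to $\Zkn{\G}$ via Propositions~\ref{prop:expsepbal} and~\ref{prop:smm} together with (\ref{eq:balO}), upgraded to a statement about $\Erw[\ln\Zbg{\G}]$ by the Azuma concentration of Fact~\ref{lem:logconc}. The only difference is that you spell out the quantifier bookkeeping in the final concentration step, which the paper leaves implicit.
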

	\begin{proof}
On the one hand, Jensen's inequality gives 
	\begin{equation}\label{eqJensen1}
	\Erw[\ln\Zbg{\G}]\leq\ln\Erw[\Zbg{\G}].
	\end{equation}
On the other hand, by  
Propositions~\ref{prop:expsepbal} and~\ref{prop:smm} and the Paley-Zigmund inequality,
		\begin{align}\label{eqJensen2}
		\pr[\Zbg{\G}\geq\Erw[\Zkn{\G}]/2]\geq
		\pr[\Zkn{\G}\geq\Erw[\Zkn{\G}]/2]
			&\geq\frac{\Erw[\Zkn{\G}]^2}{4\Erw[\Zkn{\G}^2]}=\exp(o(n)).
		\end{align}
Combining (\ref{eqJensen2}) with \Prop~\ref{prop:firstmoment} , (\ref{eq:balO}) and \Prop~\ref{prop:expsepbal}, we obtain
	\begin{align}\label{eqJensen3}
	\pr[\ln\Zbg{\G}\geq\ln\Erw[\Zbg{\G}]-\ln\ln n]\geq\exp(o(n)).
	\end{align}
Further, (\ref{eqJensen3}) and Fact \ref{lem:logconc} yield $n^{-1}\Erw[\ln\Zbg{\G}]\geq n^{-1}\ln\Erw[\Zbg{\G}]+o(1)$.
Finally, combining this lower bound with the upper bound (\ref{eqJensen1}) completes the proof.
\end{proof}

\noindent
Finally, Theorem \ref{Thm_main} follows from Theorem \ref{Thm_CDGS}, Proposition \ref{prop:maxlnkh0} and Corollary \ref{cor:energy}.

\subsection{Outlook: the condensation phase transition}\label{Sec_cond}
According to non-rigorous physics methods~\cite{pnas,MM} for $d$ only slightly above the bound from \Thm~\ref{Thm_main}
the formula (\ref{eqThm_main}) does not hold for all $\beta>0$ anymore.
While the exact formula is quite complicated (e.g., it involves the solution to a distributional fixed point problem),
the critical degree satisfies $\dc=(2k-1)\ln k-2\ln 2+o_k(1)$.
Thus, for $d>\dc$ there occurs a phase transition at a certain critical inverse temperature $\beta_{k,\mathrm{cond}}(d)$.
The {\em existence} of a critical $\beta_{k,\mathrm{cond}}(d)$ follows from prior results on the random graph coloring problem~\cite{Cond}.
However, the value of $\beta_{k,\mathrm{cond}}(d)$ is not (rigorously) known.

The physics intuition of how this phase transition comes about is as follows.
For $\beta<\beta_{k,\mathrm{cond}}(d)$ the Gibbs measure decomposes into an exponential number of clusters that each have  probability mass $\exp(-\Omega(n))$.
Hence, if we sample $\SIGMA,\TAU$ independently from the Gibbs measure, then most likely they belong to different clusters,
in which case their overlap should be very close to $\bar\rho$.
By contrast, for $\beta>\beta_{k,\mathrm{cond}}(d)$ a bounded number of clusters dominate the Gibbs measure, i.e.,
there are individual clusters whose probability mass is $\Omega(1)$.
In effect, for $\beta>\beta_{k,\mathrm{cond}}(d)$ the overlap of two randomly chosen color assignments is
{\em not} concentrated on the single value $\bar\rho$ anymore, because there is a non-vanishing probability that both belong to the same cluster.
In effect, the second moment method fails.
In fact, we expect that $\Erw[\ln \Zbg{\G}]<\ln\Erw [\Zbg{\G}]-\Omega(n)$ for all $\beta>\beta_{k,\mathrm{cond}}(d)$.

But even the second moment argument for separable color assignments does not quite reach the expected critical degree $\dc$.
Indeed, for $d>(2k-1)\ln k-2+o_k(1)$ the maximum over the set of separable overlaps is attained at
		$\rho_{ij}=\alpha\vecone\{i=j\}+\frac{1-\alpha}{k-1}\vecone\{i\neq j\}$ with $\alpha=1-1/k+o_k(1/k)$.
In terms of the physics intuition, this overlap matrix corresponds to pairs of color assignments that belong to the same cluster.
In other words, the second moment method fails because the {\em expected} cluster size blows up.
A similar problem occurs in the $k$-colorability problem~\cite{Danny}.
There the issue was resolved by explicitly controlling the {\em median} cluster size, which is by an exponential factor smaller
than the expected cluster size~\cite{Cond}.
We expect that a similar remedy applies to the Potts model, although the fact that monochromatic edges are allowed
entails that the proof method from~\cite{Cond} does not apply.
In any case, \Thm~\ref{Thm_main} reduces the task of determining the phase transition to the problem of controlling the median cluster size.

Furthermore, also in the case of degrees above $\dk$ at least the existence of a phase transition has been established rigorously~\cite{CDGS}.
It would be most interesting to see if the present methods can be extended to $d>\dk$ in order to obtain a more precise estimate
of $\beta_{k,\mathrm{cond}}(d)$.

\section{Singly stochastic analysis}\label{sec:singlystoch}

\noindent
We prove \Prop~\ref{prop:maxlnkh0} by way of the following proposition regarding the maximum of $f_{d,\beta}$ over the set of singly-stochastic matrices.

\begin{proposition}
	\label{prop:maxlnkh}
	If $d\in[  2(k-1)\ln (k-1),(2k-1)\ln k-2]$ and $\beta\le \ln k$, then 
	$f_{d,\beta}(\bar{\rho}) > f_{d,\beta}(\rho)$
	for all $\rho\in \cS\setminus\{\bar\rho\}$.
	\end{proposition}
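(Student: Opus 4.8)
The plan is to exploit the decomposition of $f_{d,\beta}$ over the rows of a singly-stochastic matrix. Writing $\rho\in\cS$ as a tuple of probability vectors $\rho_1,\dots,\rho_k\in\Delta_{k-1}$ and using $\sum_j\rho_{ij}=1$, one has
\[
H(k^{-1}\rho)=\ln k+\frac1k\sum_{i=1}^kH(\rho_i),\qquad \|\rho\|_2^2=\sum_{i=1}^k\|\rho_i\|_2^2\in[1,k],
\]
so that $f_{d,\beta}(\rho)=\ln k+\frac1k\sum_iH(\rho_i)+E(\rho)$, where the energy $E(\rho)$ depends on $\rho$ only through $s=s(\rho):=\|\rho\|_2^2$, is strictly increasing in $s$, and is minimised (at $s=1$) exactly when $\rho=\bar\rho$. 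Hence
\[
f_{d,\beta}(\bar\rho)-f_{d,\beta}(\rho)=\frac1k\sum_{i=1}^k\bigl(\ln k-H(\rho_i)\bigr)-\bigl(E(\rho)-E(\bar\rho)\bigr),
\]
and the whole point is to show that this \emph{entropy deficit} beats the \emph{energy gain} whenever $\rho\neq\bar\rho$.

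First I would control the energy gain using the hypothesis $\beta\le\ln k$, which is equivalent to $c_\beta=1-\eul^{-\beta}\le1-1/k$ and hence gives $\tfrac{c_\beta}{k-c_\beta}\le\tfrac{k-1}{k^2-k+1}$. Combining this with the identity $1-\tfrac2kc_\beta+\tfrac{c_\beta^2}{k^2}=\bigl(1-\tfrac{c_\beta}{k}\bigr)^2$ and the bound $\ln(1+x)<x$, one obtains, for any $\rho$ with $s=s(\rho)>1$,
\[
E(\rho)-E(\bar\rho)=\frac d2\ln\!\Bigl(1+(s-1)\Bigl(\tfrac{c_\beta}{k-c_\beta}\Bigr)^{\!2}\Bigr)<\frac d2(s-1)\Bigl(\tfrac{c_\beta}{k-c_\beta}\Bigr)^{\!2}\le\frac{D_k}{k}\,(s-1),\qquad D_k:=\frac{dk}{2}\Bigl(\tfrac{k-1}{k^2-k+1}\Bigr)^{\!2}.
\]
Since $s-1=\sum_i(\|\rho_i\|_2^2-1/k)$, the Proposition follows once I establish the single-row inequality
\begin{equation}\label{eq:perrowS}
\ln k-H(p)\ \ge\ D_k\Bigl(\|p\|_2^2-\tfrac1k\Bigr)\qquad\text{for every }p\in\Delta_{k-1},
\end{equation}
because summing \eqref{eq:perrowS} over $i=1,\dots,k$ gives $\tfrac1k\sum_i(\ln k-H(\rho_i))\ge\tfrac{D_k}{k}(s-1)>E(\rho)-E(\bar\rho)$ whenever $s>1$, and $s=1$ forces $\rho=\bar\rho$.

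Establishing \eqref{eq:perrowS} is the technical core, and this is where the hypothesis $d\le(2k-1)\ln k-2$ enters. One has $D_k=\tfrac{d}{2k}(1+O(k^{-2}))$, so for $d$ as large as $(2k-1)\ln k-2$ one gets $D_k=\ln k-\tfrac{\ln k}{2k}-\tfrac1k+o(k^{-1})$; meanwhile the ratio $r(p)=(\ln k-H(p))/(\|p\|_2^2-\tfrac1k)$ tends to $k/2$ as $p\to k^{-1}\vecone$ and to $\tfrac{k\ln k}{k-1}=\ln k+\tfrac{\ln k}{k}+o(k^{-1})$ as $p$ approaches a coordinate vector. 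Thus \eqref{eq:perrowS} holds with slack of order only $\tfrac{\ln k}{k}$, which is exactly why one can afford neither $\beta=\infty$ (i.e.\ $c_\beta=1$) nor the cruder bound $d\le2k\ln k$: indeed, for proper $k$-colourings the analogous statement is known to be false once $d$ exceeds about $2(k-1)\ln(k-1)$.

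To prove \eqref{eq:perrowS} itself, observe that a minimiser over $\Delta_{k-1}$ of $g(p)=\ln k+\sum_jp_j\ln p_j-D_k\sum_jp_j^2+D_k/k$ has at most two distinct positive entries: on any face the stationarity condition reads $\ln p_j-2D_kp_j=\mathrm{const}$, and $x\mapsto\ln x-2D_kx$ is unimodal. For a two-value vector, writing $q_j=kp_j$ turns \eqref{eq:perrowS} into $\sum_j\phi(q_j)\ge k\phi(1)$ with $\phi(x)=x\ln x-\tfrac{D_k}{k}x^2$, which is convex on $[0,k/(2D_k)]$ and concave beyond. If both values of $q$ lie in the convex range, a weighted Jensen inequality reduces the claim to $t\,\phi(k/t)\ge k\phi(1)$ for integers $t\in[1,k]$, and $t\mapsto t\,\phi(k/t)-k\phi(1)$ vanishes at $t=k$ and is unimodal, so the binding case is $t=1$, which is the inequality $D_k\le\tfrac{k\ln k}{k-1}$ — a comfortable consequence of $d\le(2k-1)\ln k-2$. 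In the remaining case the larger value of $q$ exceeds $k/(2D_k)$; a short monotonicity computation (in the number of small entries, then in the number of large entries) shows the worst configuration is then $p_\alpha=(\alpha,\tfrac{1-\alpha}{k-1},\dots,\tfrac{1-\alpha}{k-1})$ with $\alpha$ close to $1$, and \eqref{eq:perrowS} becomes an explicit one-variable inequality for which $d\le(2k-1)\ln k-2$ leaves exactly the required margin (with equality only at $\alpha=1/k$, i.e.\ $p=k^{-1}\vecone$). This last, genuinely tight, one-variable estimate is the main obstacle; the rest is bookkeeping.
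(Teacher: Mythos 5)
Your argument is correct in outline, but it is a genuinely different route from the one the paper takes. You follow the Achlioptas--Naor/CDGS strategy: decompose $f_{d,\beta}$ over the rows of $\rho\in\cS$, linearise the energy via $\ln(1+x)<x$, and reduce everything to the single-simplex inequality $\ln k-H(p)\ge D_k(\|p\|_2^2-k^{-1})$. Your new ingredient is the observation that $\beta\le\ln k$ gives $c_\beta/(k-c_\beta)\le(k-1)/(k^2-k+1)$, so that $D_k=\frac d{2k}(1+O_k(k^{-2}))\le\ln k-\frac{\ln k}{2k}-\frac1k+O_k(\ln k/k^2)$ even at $d=(2k-1)\ln k-2$; this is exactly the $(1-2/k)$-factor gain over the $\beta=\infty$ coefficient $d/(2(k-1)^2)$ that buys the extra $\sim\ln k$ in the admissible degree. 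Two remarks. First, the ``main obstacle'' you identify at the end is not actually an obstacle: the infimum of $(\ln k-H(p))/(\|p\|_2^2-k^{-1})$ over the simplex equals $\frac{k\ln(k-1)}{k-1}=\ln k+\frac{\ln k-1}{k}+O_k(\ln k/k^2)$ (this is precisely the content of the Achlioptas--Naor lemma underlying \Thm~\ref{Thm_CDGS}), and your $D_k$ sits below it with margin $\frac{3\ln k}{2k}$, so the per-row inequality can simply be quoted rather than re-proved; your sketch of re-proving it is also essentially sound, except that in the ``both values in the convex range'' case plain Jensen already finishes (the $t\,\phi(k/t)$ reduction you invoke there belongs to the other case). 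Second, the paper proceeds quite differently: it first reduces to the single temperature $\beta=\ln k$ via the monotonicity statement (\Lem~\ref{lem:monotony}) and then runs the local-maxima classification of \cite{Danny} (\Lem s~\ref{lem:smax}, \ref{lem:NoHalfEntries}, \ref{lem:FormMaxMatrix} and the comparison with the candidate maximisers $\rho_s$ in \Lem~\ref{lem:barmax}). That machinery is heavier for the singly-stochastic problem than your clean per-row bound, but it is not wasted: the same structural lemmas are what make the doubly-stochastic, separability-constrained optimisation of \Sec~\ref{sec:doublystoch} tractable, where no row decomposition is available. Your approach yields a shorter self-contained proof of \Prop~\ref{prop:maxlnkh} but would not by itself extend to \Prop~\ref{prop:maxsep}.
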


To prove Proposition \ref{prop:maxlnkh} we will closely follow the proof strategy developed for the graph coloring problem in \cite[\Sec~4]{Danny}.
Basically, that argument dealt with optimizing the function $f_{d,\infty}$ (i.e., $c_\beta$ is replaced by $1$) over $\cS$
and we extend that argument to finite values of $\beta$.
In fact, the following monotonicity statement shows that it suffices to prove \Prop~\ref{prop:maxlnkh} for $\beta=\ln k$;
	related monotonicity statements were used in~\cite{h2c} for hypergraph $2$-coloring and in~\cite{Victor} for regular $k$-SAT.

\begin{lemma}
	\label{lem:monotony}
	For all $d>0$, $\beta\geq0$, $\rho\in\cS$ 
	we have
	$$\frac{\partial}{\partial\beta}f_{d,\beta}(\bar\rho)\leq\frac{\partial}{\partial\beta}f_{d,\beta}(\rho)<0.$$
Hence, if $f_{d,\beta'}(\bar{\rho})\ge f_{d,\beta'}(\rho)$ for $\beta'\in[0,\infty]$, then $f_{d,\beta}(\bar{\rho})\ge f_{d,\beta}(\rho)$ for all $\beta<\beta'$.
	\begin{proof}
		Differentiating by $\beta$ reveals that $\beta\mapsto f_{d,\beta}(\rho)$ is monotonous.
		\begin{align}
		\frac{\partial}{\partial \beta}f_{d,\beta}(\rho)=-\frac{d}{2}\frac{\frac{2}{k}-\|\rho\|_2^2\frac{2c_\beta}{k^2} e^{-\beta}}{1-\frac{2}{k}c_\beta+\frac{\|\rho\|_2^2}{k^2}c_\beta^2}<0.\label{eq:monbeta}
		\end{align}
		Setting $y=\|\rho\|_2^2$ and construing $\frac{\partial}{\partial \beta}f_{d,\beta}(\rho)$ as a map of $y$,
		\begin{align*}
		\phi:[1,k]\to \RR,\quad \phi(y)\mapsto -\frac{d}{2}\frac{\frac{2}{k}-y\frac{2c_\beta}{k^2} e^{-\beta}}{1-\frac{2}{k}c_\beta+\frac{y}{k^2}c_\beta^2},
		\end{align*}
		differentiating $\frac{\partial}{\partial \beta}f_{d,\beta}(\rho)$ by $y$, we obtain
		\begin{align}
		\frac{\partial}{\partial y}\phi(y)&=\frac{\frac{1}{k^2}2c_\beta e^{-\beta}\left(1-\frac{2}{k}c_\beta+\frac{y}{k^2}c_\beta^2\right)-\left(-\frac{2}{k}e^{-\beta}+\frac{y}{k^2}2c_\beta e^{-\beta}\right)\frac{c_\beta^2}{k^2}}{\left(1-\frac{2}{k}c_\beta+\frac{y}{k^2}c_\beta^2\right)^2}\nonumber \\
		&=\frac{\frac{2c_\beta e^{-\beta}}{k^2}\left(1-\frac{c_\beta}{k}\right)+y\frac{2c_\beta^3e^{-\beta}}{k^3}\left(1-\frac{1}{k}\right)}{\left(1-\frac{2}{k}c_\beta+\frac{y}{k^2}c_\beta^2\right)^2}\ge 0\qquad \text{ for }y\in[1,k].\label{eq:mony}
		\end{align}
		Hence, $y\mapsto\frac{\partial}{\partial \beta}f_{d,\beta}(\rho)$ has a global minimum at $y=1$.  Because $\|\rho\|_2^2=1$ is only the case for $\rho=\bar{\rho}$ the combination of \eqref{eq:monbeta} and \eqref{eq:mony} yields the assertion.
	\end{proof}
\end{lemma}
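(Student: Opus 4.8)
The plan is to differentiate $f_{d,\beta}$ directly in $\beta$ and analyse the resulting rational function of the single scalar $\|\rho\|_2^2$. Since the entropy term $H(k^{-1}\rho)$ in \eqref{eq:optfunc} carries no $\beta$-dependence, we have $\frac{\partial}{\partial\beta}f_{d,\beta}(\rho)=\frac{\partial}{\partial\beta}E(\rho)$, and differentiating through $c_\beta=1-\exp(-\beta)$ (so that $\partial c_\beta/\partial\beta=\exp(-\beta)$) gives
$$\frac{\partial}{\partial\beta}f_{d,\beta}(\rho)=-\frac{d}{2}\exp(-\beta)\,\frac{\frac{2}{k}-\frac{2}{k^2}\|\rho\|_2^2c_\beta}{1-\frac{2}{k}c_\beta+\frac{1}{k^2}\|\rho\|_2^2c_\beta^2}.$$
Before analysing this I would record the elementary bound $1\le\|\rho\|_2^2\le k$ for every $\rho\in\cS$, with $\|\rho\|_2^2=1$ exactly for $\rho=\bar\rho=k^{-1}\vecone$: each row $\rho_i$ of $\rho$ has entries in $[0,1]$ summing to $1$, so $\sum_j\rho_{ij}^2\le\sum_j\rho_{ij}=1$, while Cauchy--Schwarz together with $\sum_j\rho_{ij}=1$ gives $\sum_j\rho_{ij}^2\ge1/k$ with equality only for the uniform row; summing over $i\in[k]$ yields $\|\rho\|_2^2=\sum_{i,j}\rho_{ij}^2\in[1,k]$, and $\|\rho\|_2^2=1$ forces every row to be uniform.

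The strict inequality $\frac{\partial}{\partial\beta}f_{d,\beta}(\rho)<0$ then follows from the positivity of the numerator and denominator in the display. Using $\|\rho\|_2^2\ge1$ the denominator is at least $(1-c_\beta/k)^2>0$, and using $\|\rho\|_2^2\le k$ the numerator is at least $\frac{2}{k}(1-c_\beta)=\frac{2}{k}\exp(-\beta)>0$; combined with the prefactor $-\frac{d}{2}\exp(-\beta)<0$ this settles the rightmost inequality of the lemma.

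For the comparison $\frac{\partial}{\partial\beta}f_{d,\beta}(\bar\rho)\le\frac{\partial}{\partial\beta}f_{d,\beta}(\rho)$, I would view the right-hand side of the display as a function $\phi(y)=-\frac{d}{2}\exp(-\beta)N(y)/D(y)$ of $y=\|\rho\|_2^2\in[1,k]$, where $N(y)=\frac{2}{k}-\frac{2c_\beta}{k^2}y$ and $D(y)=1-\frac{2}{k}c_\beta+\frac{c_\beta^2}{k^2}y$. On $[1,k]$ the affine function $N$ has non-positive slope and, by the previous paragraph, stays strictly positive, while $D$ has non-negative slope and stays strictly positive; hence $N/D$ is non-increasing in $y$ and $\phi$ is non-decreasing in $y$. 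Since $\bar\rho$ is the unique matrix of $\cS$ attaining the minimal value $y=1$, this gives $\frac{\partial}{\partial\beta}f_{d,\beta}(\bar\rho)=\phi(1)\le\phi(\|\rho\|_2^2)=\frac{\partial}{\partial\beta}f_{d,\beta}(\rho)$. (Equivalently, one differentiates $\phi$ and checks that the $y$-dependent contributions to $N'D-ND'$ cancel, leaving $\phi'(y)=d\,c_\beta\exp(-\beta)(1-c_\beta/k)/(k^2D(y)^2)\ge0$.)

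Finally, the ``Hence'' part follows by integrating. Put $g(\beta)=f_{d,\beta}(\bar\rho)-f_{d,\beta}(\rho)$; the two inequalities just proved give $g'(\beta)=\frac{\partial}{\partial\beta}f_{d,\beta}(\bar\rho)-\frac{\partial}{\partial\beta}f_{d,\beta}(\rho)\le0$, so $g$ is non-increasing on $[0,\infty)$, whence $g(\beta')\ge0$ forces $g(\beta)\ge g(\beta')\ge0$ for every $\beta<\beta'$. For the borderline $\beta'=\infty$ one observes that $f_{d,\beta}(\rho)$ extends continuously as $\beta\to\infty$ (simply replace $c_\beta$ by $1$), so $g(\infty)=\lim_{\beta\to\infty}g(\beta)$ exists and the same monotonicity argument applies. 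The lemma is in essence a calculus exercise; the only place that requires genuine care is checking that the numerator $\frac{2}{k}-\frac{2c_\beta}{k^2}\|\rho\|_2^2$ stays strictly positive over the whole range of $\|\rho\|_2^2$ --- which is exactly where the upper bound $\|\rho\|_2^2\le k$, and thus the row-stochasticity of $\rho$, enters --- together with the cancellation that makes $\phi$ monotone in $y$.
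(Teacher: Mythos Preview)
Your proof is correct and follows essentially the same route as the paper: compute $\partial f_{d,\beta}/\partial\beta$, reduce it to a function of the single variable $y=\|\rho\|_2^2\in[1,k]$, and show this function is non-decreasing with its minimum attained uniquely at $y=1$, i.e.\ at $\rho=\bar\rho$. Your argument is in fact a bit tidier---you justify the bounds $1\le\|\rho\|_2^2\le k$ explicitly, observe directly that a positive decreasing numerator over a positive increasing denominator is decreasing (rather than brute-forcing the quotient rule), and spell out the integration step including the $\beta'=\infty$ case---but the substance is the same.
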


The following basic observation concerning the partial derivatives of $f_{d,\beta}$ is reminiscent of~\cite[\Lem~4.11]{Danny}.

\begin{claim}\label{claim:bestrho}
Let $\rho\in\cS$. With $i,j,l\in[k]$ such that $\rho_{il},\rho_{ij}>0$ set $\delta=\rho_{il}-\rho_{ij}$.
	\begin{enumerate}[i)]
		\item Then 
		\[\sign\left(\frac{\partial}{\partial \rho_{ij}}f_{d,\beta}(\rho)-\frac{\partial}{\partial \rho_{il}}f_{d,\beta}(\rho)\right)=\sign\left(1+\frac{\delta}{\rho_{ij}}-\exp\left(\frac{dc_\beta\delta}{k-2c_\beta+c_\beta^2\|\rho\|_2^2/k}\right)\right).\]
		\item If ${\partial E(\rho)}/{\partial\rho_{ij}}<1/k$ then there is $\delta^\ast>0$ such that for all $0<\delta<\delta^\ast$ 
		\begin{align}
		1+\frac{\delta}{\rho_{ij}}-\exp\left(\frac{dc_\beta\delta}{k-2c_\beta+c_\beta^2\|\rho\|_2^2/k}\right)>0.\label{eq:sign}
		\end{align}
		If ${\partial E(\rho)}/{\partial\rho_{ij}}\ge1/k$, the left hand side of \eqref{eq:sign} is negative for all $\delta>0$.
	\end{enumerate}
\end{claim}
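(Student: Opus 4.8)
The plan is to reduce both parts to one elementary inequality in the scalar $\delta$, controlled by the strict convexity of the exponential.

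For part~(i) I would differentiate $f_{d,\beta}$ via the explicit formulas \eqref{eq:derivH} and \eqref{eq:derivE} and form the difference of the two partial derivatives. The only bookkeeping point is that the entry-independent constant appearing in $\partial H(k^{-1}\rho)/\partial\rho_{ij}$ cancels, so that
\[
\frac{\partial}{\partial\rho_{ij}}f_{d,\beta}(\rho)-\frac{\partial}{\partial\rho_{il}}f_{d,\beta}(\rho)
=\frac1k\left(\ln\frac{\rho_{il}}{\rho_{ij}}-\frac{dc_\beta^{2}\,\delta}{k-2c_\beta+c_\beta^{2}\|\rho\|_2^{2}/k}\right).
\]
Dropping the positive factor $1/k$ and using $\rho_{il}/\rho_{ij}=1+\delta/\rho_{ij}$ together with the monotonicity of $t\mapsto\ln t$ (equivalently of $t\mapsto\eul^{t}$), so that $\sign(\ln a-b)=\sign(a-\eul^{b})$, one obtains the asserted identity.

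For part~(ii) I would fix $\rho_{ij}>0$ and $\|\rho\|_2^{2}$ and view the left-hand side of \eqref{eq:sign} as a function $G(\delta)=1+\delta/\rho_{ij}-\eul^{c\delta}$ of the real variable $\delta$, with $c=dc_\beta^{2}/(k-2c_\beta+c_\beta^{2}\|\rho\|_2^{2}/k)>0$. Then $G(0)=0$ and $G'(\delta)=1/\rho_{ij}-c\,\eul^{c\delta}$, so $G'(0)=1/\rho_{ij}-c$. Reading $c$ off \eqref{eq:derivE}, the hypothesis $\partial E(\rho)/\partial\rho_{ij}<1/k$ is equivalent to $c\rho_{ij}<1$, i.e.\ to $G'(0)>0$; since $G$ is $C^{1}$ with $G(0)=0$, there is then $\delta^{\ast}>0$ with $G>0$ on $(0,\delta^{\ast})$. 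In the complementary case $\partial E(\rho)/\partial\rho_{ij}\ge1/k$ one has $c\ge1/\rho_{ij}$, and strict convexity of the exponential yields $\eul^{c\delta}>1+c\delta\ge1+\delta/\rho_{ij}$ for every $\delta>0$, whence $G(\delta)<0$ throughout.

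The argument is routine calculus. The two points that merit a little care are the cancellation of the entropy constant in part~(i) and the observation that $\partial E(\rho)/\partial\rho_{ij}\lessgtr1/k$ is exactly the dichotomy deciding whether the linear term $1+\delta/\rho_{ij}$ or the exponential $\eul^{c\delta}$ dominates for small $\delta>0$; beyond that I anticipate no obstacle.
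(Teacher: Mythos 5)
Your proposal is correct and follows essentially the same route as the paper: compute the difference of the partial derivatives from \eqref{eq:derivH} and \eqref{eq:derivE} to get $\frac1k[\ln(1+\delta/\rho_{ij})-\psi(\delta)]$, exponentiate to obtain the sign identity, and settle part~(ii) by comparing the slope at $\delta=0$ of the line $1+\delta/\rho_{ij}$ with that of the strictly convex exponential, noting that $\partial E(\rho)/\partial\rho_{ij}\lessgtr 1/k$ is exactly the dichotomy $c\rho_{ij}\lessgtr1$. Your write-up is in fact slightly more explicit than the paper's (which argues via "at most one intersection" of the line with the convex curve), but the substance is identical.
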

\begin{proof}
By \eqref{eq:derivH}, \eqref{eq:derivE} and the choice of $\delta$,
	\begin{align}
	\frac{\partial}{\partial\rho_{ij}}f_{d,\beta}(\rho)-\frac{\partial}{\partial\rho_{il}}f_{d,\beta}(\rho)=\frac{1}{k}\left[\ln\left(1+\frac{\delta}{\rho_{ij}}\right)-\frac{dc_\beta^2\delta}{k-2c_\beta+c_\beta\|\rho\|_2^2/k}\right].\label{eq:derivF}
	\end{align} 
	The first part of the claim follows because the signs of the terms in \eqref{eq:derivF} are invariant under exponentiation of the minuend $\phi(\delta)=\ln(1+\delta/\rho_{ij})$ and subtrahend $\psi(\delta)=dc_\beta^2\delta/(k-2c_\beta+c_\beta\|\rho\|_2^2/k)$. The second part follows from the observation that the linear function $\exp(\phi):\RR^+\to\RR$ intersects at most once with the strictly convex function $\exp(\psi):\RR^+\to\RR$. This is only the case if the derivative of $\exp(\phi)$ in $\delta=0$ is strictly greater than that of $\exp(\psi)$.
\end{proof}

\noindent
The following lemma provides a general ``maximum entropy'' principle that we will use repeatedly (cf.~\cite[\Prop~4.7]{Danny}).

\begin{lemma}
	\label{lem:smax}
	Let $d\le (2k-1)\ln k$ and $\beta>0$. For $\rho\in\cS$, a fixed row $i$ and a set of columns $J\subset[k]$, 
	set
	$\hat\rho_{ab}=\sum_{j\in J}\rho_{ij}/|J|$ for all $(a,b)\in\{i\}\times J$ and $\hat{\rho}_{ab}=\rho_{ab}$ for all $(a,b)\notin\{i\}\times J$. Let $\lambda\ge3\ln\ln k/\ln k$. If $|J|\ge k^\lambda$ and $\max_{j\in J}\rho_{ij}<\lambda/2-\ln\ln k/\ln k$, then $f_{d,\beta}(\hat\rho)> f_{d,\beta}(\rho)$ if $\rho\neq\hat{\rho}$.
\end{lemma}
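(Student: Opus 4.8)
The plan is to show that averaging a row of $\rho$ over a set of columns $J$ can only increase $f_{d,\beta}$, provided the entries involved are small and $J$ is not too small. The entropy term $H(k^{-1}\rho)$ is concave and symmetric under permutations of coordinates, so replacing the values $(\rho_{ij})_{j\in J}$ by their common average $\hat\rho_{ij}=\sum_{j\in J}\rho_{ij}/|J|$ can only increase $H(k^{-1}\rho)$; in fact, strictly so unless the $\rho_{ij}$, $j\in J$, were already all equal. The difficulty is that the energy term $E(\rho)$ depends on $\rho$ only through $\|\rho\|_2^2$, and averaging a block of coordinates \emph{decreases} $\|\rho\|_2^2$ (by the QM–AM inequality applied within the block). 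Since $E$ is increasing in $\|\rho\|_2^2$ (the derivative \eqref{eq:derivE} is nonnegative), this pushes in the wrong direction. So the heart of the argument is to show that the entropy gain dominates the energy loss.

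First I would quantify both effects. Write $s=\sum_{j\in J}\rho_{ij}$ and $a=s/|J|$, so the averaged entries all equal $a$. For the entropy: by the concavity of $t\mapsto -t\ln t$ and a Taylor-type estimate, the increase $H(k^{-1}\hat\rho)-H(k^{-1}\rho)$ equals $\frac1k\sum_{j\in J}\bigl(\rho_{ij}\ln\rho_{ij}-a\ln a\bigr)$, which one lower-bounds using that each $\rho_{ij}<\lambda/2-\ln\ln k/\ln k$ and $|J|\ge k^\lambda$; the point is that $-\ln a\ge -\ln(\max_j\rho_{ij})\ge (\lambda/2)\ln k - O(\ln\ln k)$ is large, so each term where $\rho_{ij}$ differs appreciably from $a$ contributes a gain of order $\rho_{ij}\cdot(\lambda\ln k)/k$ up to lower-order corrections, and even the crudest bound on the total entropy gain is at least $c\lambda\ln k/k$ times $\sum_{j\in J}|\rho_{ij}-a|$ or a comparable quadratic quantity. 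For the energy: $E(\hat\rho)-E(\rho) = \frac d2\bigl(\ln[\cdots + \|\hat\rho\|_2^2 c_\beta^2/k^2] - \ln[\cdots+\|\rho\|_2^2 c_\beta^2/k^2]\bigr)$, which by the mean value theorem and \eqref{eq:derivE} is bounded below by $-\frac{d c_\beta^2}{2k^2}\cdot\frac{\|\rho\|_2^2-\|\hat\rho\|_2^2}{1-2c_\beta/k}$, i.e. of order $-\frac{d}{k^2}\bigl(\sum_{j\in J}\rho_{ij}^2 - |J|a^2\bigr) = -\frac{d}{k^2}\sum_{j\in J}(\rho_{ij}-a)^2$.

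Now I would compare. The energy loss is $O(d/k^2)$ times $\sum_{j\in J}(\rho_{ij}-a)^2$, and since each $\rho_{ij}<1$ we have $\sum_j(\rho_{ij}-a)^2 \le \sum_j|\rho_{ij}-a|$ up to constants, while also $\sum_j(\rho_{ij}-a)^2$ is at most $\max_j\rho_{ij}$ times $\sum_j|\rho_{ij}-a|$, which is $o(\lambda)\cdot\sum_j|\rho_{ij}-a|$. With $d\le (2k-1)\ln k$ the energy-loss coefficient is $O(\ln k/k)$. Against this, the entropy gain carries a factor $\Omega(\lambda\ln k/k)$ times a comparable first-moment (or second-moment) deviation quantity. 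Because $\lambda\ge 3\ln\ln k/\ln k$ and the constraint $\max_{j\in J}\rho_{ij}<\lambda/2-\ln\ln k/\ln k$ forces $-\ln a$ to be at least of order $\lambda\ln k$, the extra $\lambda$-type factor and the gap between $-\ln a$ and $-\ln\max_j\rho_{ij}$ are exactly what make the entropy term win; one reads off $f_{d,\beta}(\hat\rho)-f_{d,\beta}(\rho) > 0$ unless $\rho=\hat\rho$. I expect the main obstacle to be the bookkeeping in the entropy lower bound: one must be careful that it is not merely a second-order (quadratic) gain that could be swamped, and exploit that $-t\ln t$ is genuinely ``more concave'' when $t$ is exponentially small, so that the relevant comparison is between $\sum_j \rho_{ij}\ln(\rho_{ij}/a)$ (a Kullback–Leibler-type quantity, always nonnegative and of the right order once $-\ln a$ is large) and the tiny $O(d/k^2)$ energy perturbation. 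Matching the two against the precise thresholds on $\lambda$ and $\max_{j\in J}\rho_{ij}$ is the delicate point; everything else is routine estimation using \eqref{eq:derivH} and \eqref{eq:derivE}.
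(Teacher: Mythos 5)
Your route is genuinely different from the paper's: you try to bound the total increment $f_{d,\beta}(\hat\rho)-f_{d,\beta}(\rho)$ directly, pitting the Kullback--Leibler-type entropy gain $\frac1k\sum_{j\in J}\rho_{ij}\ln(\rho_{ij}/a)$ (with $a=\sum_{j\in J}\rho_{ij}/|J|$) against the energy loss $\le\frac{d c_\beta^2}{2k^2(1-2c_\beta/k)}\sum_{j\in J}(\rho_{ij}-a)^2$. The paper instead takes a maximizer of $f_{d,\beta}$ over the compact set of matrices agreeing with $\rho$ off $\{i\}\times J$, and uses the first-order exchange criterion of Claim~\ref{claim:bestrho} (the sign of $\partial f/\partial\rho_{ij}-\partial f/\partial\rho_{il}$) to show that any maximizer must have all $J$-entries equal; this only requires checking one scalar sign condition, $1+\delta/\check\rho_{ia}>\exp(dc_\beta^2\delta/(k-2c_\beta+\cdots))$, using $1/\check\rho_{ia}\ge|J|\ge k^\lambda$ and $\delta\le\lambda/2-\ln\ln k/\ln k$, and avoids ever having to lower-bound a KL divergence against a quadratic.

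Your version has a genuine gap exactly at the decisive comparison, and some of the supporting estimates are wrong. First, $-\ln(\max_j\rho_{ij})\ge(\lambda/2)\ln k-O(\ln\ln k)$ is false: the hypothesis bounds the \emph{value} $\max_{j\in J}\rho_{ij}$ by the number $\lambda/2-\ln\ln k/\ln k$ (e.g.\ $0.49$ when $\lambda=1$), not by $k^{-\lambda/2}$; what is true is $-\ln a\ge\lambda\ln k$, coming from $|J|\ge k^\lambda$ and the unit row sum. Second, the entropy gain is \emph{not} $\Omega(\lambda\ln k/k)\sum_j|\rho_{ij}-a|$ (take all $\rho_{ij}=a(1\pm\eps)$ with $\eps\to0$), and $\sum_j(\rho_{ij}-a)^2\le(\max_j\rho_{ij})\sum_j|\rho_{ij}-a|$ gives a factor $\le\lambda/2$, which is $\Theta(\lambda)$, not $o(\lambda)$ --- and that factor of $\tfrac12$ is precisely the margin the lemma lives on, since the energy coefficient is then $\approx\frac{\lambda\ln k}{2k}$ against an entropy coefficient of at most $\approx\frac{(\lambda+\ln M/\ln k)\ln k}{k}$. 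Third, the generic bound $x\ln(x/a)-x+a\ge(x-a)^2/(2\max(x,a))$ yields coefficient $1/(2\max_j\rho_{ij})$, which is $O(1)$ and hence far below the required $\ln k$ when $\max_j\rho_{ij}$ is of constant order. The argument can be repaired --- per term one must verify $x\ln(x/a)-x+a\ge L(x-a)^2$ with $L=(1+o_k(1))\ln k$ by splitting at $x=1/(2L)$ (convexity below, concavity plus an endpoint check at $x=\max_j\rho_{ij}$ above), and the endpoint check is exactly where the thresholds $\max_j\rho_{ij}<\lambda/2-\ln\ln k/\ln k$ and $\lambda\ge3\ln\ln k/\ln k$ enter --- but this case analysis is the entire content of the lemma and is precisely the step you defer as ``the delicate point''.
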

\begin{proof}
	We may assume that $0\le\min_{j\in J}\rho_{ij}<\max_{j\in J}\rho_{i,j}$. Otherwise, we would have $\hat{\rho}=\rho$ and there is nothing to prove.
	Now let 
	\begin{align*}
	\cS_\rho=\left\{\tilde{\rho}\,:\,\tilde{\rho}_{ab}=\rho_{ab}\text{ for all }(a,b)\in\{i\}\times J\text{ and }\max_{j\in J}\tilde\rho_{ij}\le \max_{j\in J}\rho_{i,j} \right\}
	\end{align*}
	denote the set of all possible overlaps. $\cS_\rho$ is a closed subset of $\cS$ and therefore contains a maximal overlap $\check{\rho}\in\arg\max_{\tilde{\rho}\in\cS}f_{d,\beta}(\tilde{\rho})$. 
	Evidently the derivative of $H$ tends to infinity as $\rho_{ij}$ tends to zero, while the derivative of $E$ remains bounded. Therefore in a maximal overlap each entry $\check{\rho}_{ij}$, $j\in J$ is positive. As a whole, we know that $0<\min_{j\in J}\check{\rho}_{ij}\le \max_{j\in J}\check{\rho}_{ij}\le 1$. By means of Claim \ref{claim:bestrho} it remains to show that $\check\delta=\max_{j\in J}\check{\rho}_{ij}-\min_{j\in J}\check{\rho}_{ij}=0$.

	Let $a\in J$ denote the index of $\check\rho_{ia}=\min_{j\in J}\check\rho_{ij}$. Because $|J|\check\rho_{ia}\le \sum_{j\in J}\check\rho_{ij}$ and $d\le 2k\ln k-\ln k$, we have
	 \[\frac{1}{\check\rho_{ia}}\ge |J|\ge k^\lambda\ge 3\ln k>2\ln k\left(\frac{kc_\beta^2}{k-2c_\beta+c_\beta^2/k}\right)\ge \frac{k}{\check\rho_{ia}}\frac{\partial}{\partial\check\rho_{ia}}E(\check{\rho}).\]
	As $\hat{\delta}=\lambda/2-\ln\ln k/\ln k$, $\|\check\rho\|_2^2\ge1$ and $d\le 2k\ln k-\ln k$,
	\begin{align*}
	&\exp\left(\frac{dc_\beta^2\hat\delta}{k-2c_\beta+c_\beta^2\|\check\rho\|_2^2/k}\right)\le\exp\left(\frac{d\hat\delta}{k(1-c_\beta/k)^2}c_\beta^2\right) 
	\le \exp(2\hat{\delta}\ln k)\\
	\le& k^{\lambda}\ln^{-2}k\le |J|\ln^{-2}k\le \frac{1}{\check{\rho}_{ia}}\frac{1}{\ln^2k}<\frac{1}{\check{\rho}_{ia}}\frac{\ln\ln k}{2\ln k}\le\frac{1}{\check{\rho}_{ia}}\left(\frac{\lambda}{2}-\frac{\ln\ln k}{\ln k}\right)\le \frac{\hat{\delta}}{\check{\rho}_{ia}}
	\end{align*}
	confirms that  \[\sign\left(1+\frac{\delta}{\check{\rho}_{ia}}-\exp\left(\frac{dc_\beta\delta}{k-2c_\beta+c_\beta^2\|\check\rho\|_2^2/k}\right)\right)=1\]
	holds for any $\delta<\hat{\delta}$. Suppose that $\check{\delta}>0$. Then $0<\delta\le\max_{j\in J}\check{\rho}_{ij}\le\hat{\delta}$ and Claim \ref{claim:bestrho} imply that a matrix $\check\rho'$ obtained from $\check\rho'$ by decreasing $\max_{j\in J}\check{\rho}_{ij}$ by a sufficiently small $\xi>0$ and increasing $\check{\rho}_{ia}$ by the same value $\xi$ results in $f_{d,\beta}(\check{\rho}')>f_{d,\beta}(\check{\rho})$, which contradicts the maximality of $\check\rho$. Hence, a maximal overlap $\rho$ satisfies $\check\delta=\max_{j\in J}\check{\rho}_{ij}-\min_{j\in J}\check{\rho}_{ij}=0$ for any $i$, $J$ chosen according to our assumption.
\end{proof}

In order to achieve a global bound on $\max_{\rho\in \cSd}f_{d,\beta}(\rho)$ we need to pin down the structure of a maximizing matrix $\rho$.
To this end, the following elementary fact is going to be useful.

\begin{fact}[{\cite[Lemma 4.15]{Danny}}]
		\label{lem:xi}
		Let $\xi:\eps\in(0,k/2)\mapsto k^{2\eps/k}(\eps^{-1}-k^{-1})$. Let $\mu=\frac{k}{2}(1-\sqrt{1-2/\ln k})$. Then $\xi$ is decreasing on the interval $(0,\mu)$ and increasing on $(\mu,k/2)$. Furthermore, we have 
		$
		-1/2\le \xi'(\eps)\le -3/2\text{ for }b\in(0.99,1.01).
		$
	\end{fact}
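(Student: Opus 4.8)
The function $\xi$ is elementary, so this is a short calculus exercise; the plan is to exhibit the factorisation of $\xi'$, locate its unique zero in $(0,k/2)$, and then estimate $\xi'$ near $\eps=1$.

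First I would write $k^{2\eps/k}=\exp(c\eps)$ with $c=\frac{2\ln k}{k}$, so that $\xi(\eps)=e^{c\eps}(\eps^{-1}-k^{-1})$, and differentiate to obtain
\[
\xi'(\eps)=e^{c\eps}\Bigl(c\eps^{-1}-ck^{-1}-\eps^{-2}\Bigr)=e^{c\eps}\,\eps^{-2}\,g(\eps),\qquad g(\eps):=-\tfrac{c}{k}\eps^{2}+c\eps-1 .
\]
Since $e^{c\eps}\eps^{-2}>0$ on $(0,k/2)$, the sign of $\xi'$ there is exactly the sign of the concave quadratic $g$.

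Next I would compute the zeros of $g$. Solving $-\frac ck\eps^{2}+c\eps-1=0$ gives $\eps=\frac k2\bigl(1\pm\sqrt{1-\frac{4}{ck}}\bigr)$, and since $ck=2\ln k$ this is $\eps=\frac k2\bigl(1\pm\sqrt{1-2/\ln k}\bigr)$ (the radicand is positive because $k\ge k_0$ forces $\ln k>2$). The smaller zero is precisely $\mu$, while the larger zero $\frac k2\bigl(1+\sqrt{1-2/\ln k}\bigr)$ exceeds $k/2$ and hence lies outside the relevant range. As $g$ opens downwards, $g<0$ on $(0,\mu)$ and $g>0$ on $(\mu,k/2)$, which gives $\xi'<0$ on $(0,\mu)$ and $\xi'>0$ on $(\mu,k/2)$, i.e.\ the asserted monotonicity.

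For the final two-sided bound I would simply estimate each factor of $\xi'(\eps)=e^{c\eps}\eps^{-2}g(\eps)$ on $\eps\in(0.99,1.01)$: there $c\eps\le 1.01\cdot\frac{2\ln k}{k}=o_k(1)$ and $\frac ck\eps^{2}=o_k(1)$, so $g(\eps)=-1+o_k(1)$ and $e^{c\eps}=1+o_k(1)$ uniformly in $\eps$, while $\eps^{-2}\in(1.01^{-2},0.99^{-2})$; multiplying yields $\xi'(\eps)=-1+o_k(1)$ uniformly on the interval, so for $k\ge k_0$ we get $-\tfrac32\le\xi'(\eps)\le-\tfrac12$. There is no genuine obstacle here beyond bookkeeping: the only points deserving a word of care are checking $1-2/\ln k>0$ (so that $\mu$ is well defined and the factorisation of $g$ is meaningful), which holds for $k\ge k_0$, and observing that the $o_k(1)$ error terms in the last step are uniform over the compact interval $(0.99,1.01)$, which is immediate since that interval is bounded away from $0$.
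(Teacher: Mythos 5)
Your proof is correct. The paper itself gives no argument for this statement --- it is quoted verbatim from \cite[Lemma 4.15]{Danny} --- so your self-contained calculus verification (factor $\xi'(\eps)=e^{c\eps}\eps^{-2}g(\eps)$ with $c=2\ln k/k$, identify $\mu$ as the smaller root of the concave quadratic $g$, check the larger root lies beyond $k/2$, and estimate $\xi'=-1+o_k(1)$ near $\eps=1$) is exactly what a proof of this fact should look like. One remark: as printed, the final inequality ``$-1/2\le\xi'(\eps)\le-3/2$'' is a typo (vacuous as written, and ``$b$'' should be ``$\eps$''); you correctly proved the intended claim $-3/2\le\xi'(\eps)\le-1/2$ on $(0.99,1.01)$ for $k\ge k_0$.
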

	
\noindent
The following lemma rules out the possibility that the maximizer of $f_{d,\beta}$ has an entry close to $1/2$
	(cf.\ \cite[\Lem~4.13]{Danny}).

\begin{lemma}
	\label{lem:NoHalfEntries}
	Let $\beta>0$ and $d=2k\ln k -c$, where $c=O_k(\ln k)$. If $\rho\in\cSd$ has an entry $\rho_{ij}\in[0.49,0.51]$, then there is $\rho'\in \cSd$ such that $f_{d,\beta}(\rho')\ge f_{d,\beta}(\rho)+\frac{\ln k}{5k}$.
	\begin{proof}
		By means of Lemma \ref{lem:smax} we will specify $\rho'$ and provide above bound for $f_{d,\beta}(\rho)-f_{d,\beta}(\rho')$ in a distinction of two cases. Without loss of generality we may assume that the entry in the interval $[0.49,0.51]$ is $\rho_{11}$. Suppose $\rho$ maximizes $f_{d,\beta}$ subject to the condition that $\rho_{11}\in[0.49,0,51]$. 
		
		For the first case, suppose that $\rho_{1j}<0.49$ for all $j\ge 2$. By setting $J=\{2,\ldots,k\}$ and $\lambda=\ln (k-1)/\ln k$ in Lemma \ref{lem:smax}, we have $\rho_{1j}=(1-\rho_{11})/(k-1)$ for all $j\ge 2$. Let $\rho'$ denote the matrix obtained from $\rho$ by setting $\rho'_1=(1/k,\ldots,1/k)$ and $\rho_i'=\rho_i$ for $i\ge 2$. In the following assume that $k$ is sufficiently large. By Fact \ref{fact:entropy} we have
		\begin{align*}
			H(\rho_1)\le h(\rho_{11})+(1-\rho_{11})\ln(k-1)\le \ln 2 + 0.51\ln k.
		\end{align*}
		Consequently
		\begin{align}
			H(k^{-1}\rho'_1)-H(k^{-1}\rho_1)\ge \frac{0.48 \ln k}{k}.\label{eq:Case3Hdiff}
		\end{align}
		In comparison, the Frobenius norm of $\rho_1$ is bounded by
		\begin{align*}
			\|\rho_1\|_2^2\le 0.51^2+(k-1)\left(\frac{0.51}{k-1}\right)^2\le 0.261,
		\end{align*}
		while 
		\begin{align}
			\label{eq:derivEy}
			\frac{\partial}{\partial \|\rho\|_2^2}E(\rho)&=\frac{d}{2k^2}\frac{c_\beta^2}{1-2/kc_\beta+\|\rho\|_2^2/k^2c_\beta^2} =\frac{2k\ln k + O_k(\ln k)}{2k}O_k\left(\frac1k\right) 
			\le\frac{\ln k}{k}\left(1+O_k\left(\frac{1}{k}\right)\right).
		\end{align}
		Therefore 
		\begin{align}
			E(\rho)-E(\rho')\le \frac{0.262\ln k}{k}.\label{eq:Case3Ediff}
		\end{align}
		The combination of \eqref{eq:Case3Hdiff} and \eqref{eq:Case3Ediff} verifies
		\begin{align*}
			f_{d,\beta}(\rho')\ge f_{d,\beta}(\rho)+0.218\frac{\ln k}{k}\ge  f_{d,\beta}(\rho)+\frac{\ln k}{5k}
		\end{align*}
		for $\beta\ge\ln k$. By Lemma \ref{lem:monotony} 
		\begin{align}
			f_{d,\beta}(\rho')\ge f_{d,\beta}(\rho) +
			\frac{\ln k}{5k} \label{eq:case3ineq}
		\end{align} holds for any $0\le \beta \le \ln k$.
		Finally we show \eqref{eq:case3ineq} for the case that a row consists of two entries greater than 0.49. Without loss of generality we may assume that $\rho_{11}\ge\rho_{12}\ge 0.49$ and $\rho_{1j}<0.02$ for $j\ge 3$. Lemma \ref{lem:smax} with parameters $J=\{2,\ldots,k\}$ and $\lambda=\ln (k-1)/\ln k$ gives  $\rho_{1j}=(1-\rho_{11}-\rho_{12})/(k-2)$ for all $j\ge 3$. Hence, for sufficiently large $k$
		\begin{align*}
			H(\rho_1)\le h(\rho_{11}) + h(\rho_{12})+(0.02)\ln(k-2)\le 2\ln 2 + 0.02\ln k\le 0.03\ln k.
		\end{align*}
		Moreover the norm is bounded by
		\begin{align*}
			\|\rho_1\|_2^2= \rho_{11}^2+\rho_{12}^2+(k-2)\left(\frac{1-\rho_{11}-\rho_{12}}{k-2}\right)^2\le 0.501.
		\end{align*}
		Consequently
		\begin{align}
			E(\rho)-E(\rho')&\le \frac{0.51 \ln k}{k},\label{eq:Case31Ediff}\\
			H(k^{-1}\rho'_1)-H(k^{-1}\rho_1)&\ge 0.97\frac{\ln k}{k}. \label{eq:Case31Hdiff}
		\end{align}
		The combination of  \eqref{eq:Case31Ediff} and \eqref{eq:Case31Hdiff} yields \eqref{eq:case3ineq} for $\beta\ge\ln k$. By Lemma \ref{lem:monotony} the assertion follows for $0\le \beta\le \ln k$.
	\end{proof}
\end{lemma}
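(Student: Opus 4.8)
The plan is to adapt the argument of \cite[\Lem~4.13]{Danny} to the function $f_{d,\beta}$: if some row of $\rho$ carries an entry near $1/2$, then replacing that row by the uniform vector $k^{-1}\vecone$ raises the entropy term $H(k^{-1}\rho)$ by substantially more than it can lower the energy term $E(\rho)$. By the row-symmetry of $f_{d,\beta}$ I may assume the entry in $[0.49,0.51]$ is $\rho_{11}$, and since the claim only asks for the \emph{existence} of an improving $\rho'$, I may additionally assume that $\rho$ maximises $f_{d,\beta}$ over $\{\rho\in\cSd:\rho_{11}\in[0.49,0.51]\}$, a maximum that exists by compactness and continuity. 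This normalisation is exactly what lets Lemma~\ref{lem:smax} bite: on any set $J\subseteq[k]\setminus\{1\}$ with $|J|\ge k^{\lambda}$ on which $\max_{j\in J}\rho_{1j}$ stays below $\lambda/2-\ln\ln k/\ln k$, the maximiser must be constant along $J$, since otherwise the flattened matrix would still satisfy $\rho_{11}\in[0.49,0.51]$ yet have a strictly larger value of $f_{d,\beta}$.

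First I would split into two cases according to whether the first row has a second entry at least $0.49$. In Case~A, $\rho_{1j}<0.49$ for all $j\ge2$, and Lemma~\ref{lem:smax} with $J=\{2,\dots,k\}$ and $\lambda=\ln(k-1)/\ln k$ applies once $k$ is large enough that $0.49<\lambda/2-\ln\ln k/\ln k$; it forces $\rho_{1j}=(1-\rho_{11})/(k-1)$ for all $j\ge2$. In Case~B, some $\rho_{1j}\ge0.49$ with $j\ge2$, so $\rho_{11}+\rho_{1j}\ge0.98$ pushes the remaining $k-2$ entries of the row below $0.02$, and Lemma~\ref{lem:smax} on those columns again makes them equal. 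In both cases I set $\rho'$ to be $\rho$ with its first row replaced by $k^{-1}\vecone$; then $\rho'\in\cSd$ and $\|\rho'\|_2\le\|\rho\|_2$. Replacing the first row changes $H(k^{-1}\rho)$ by $\frac1k(\ln k-H(\rho_1))$, and Fact~\ref{fact:entropy} bounds $H(\rho_1)$ by $h(\rho_{11})+(1-\rho_{11})\ln(k-1)\le\ln 2+0.51\ln k$ in Case~A (and, after splitting off the two heavy columns, by roughly $\ln 2+0.02\ln k\le0.03\ln k$ in Case~B), so the entropy gain is at least $0.48(\ln k)/k$ (resp.\ $0.97(\ln k)/k$) for $k$ large. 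For the energy, $E$ is increasing in $\|\rho\|_2^2$, with $\partial E/\partial\|\rho\|_2^2=\frac{d}{2k^2}\cdot\frac{c_\beta^2}{1-\frac2kc_\beta+\|\rho\|_2^2c_\beta^2/k^2}\le\frac{\ln k}{k}(1+O_k(1/k))$ because $d=2k\ln k+O_k(\ln k)$ and $c_\beta\le1$; together with the crude bounds $\|\rho_1\|_2^2\le0.261$ (resp.\ $\le0.501$) this gives $E(\rho)-E(\rho')\le0.262(\ln k)/k$ (resp.\ $\le0.51(\ln k)/k$). Subtracting, $f_{d,\beta}(\rho')-f_{d,\beta}(\rho)\ge0.218(\ln k)/k$ (resp.\ $\ge0.46(\ln k)/k$), which exceeds $(\ln k)/(5k)$.

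These estimates are cleanest for $\beta\ge\ln k$, where $c_\beta^2/(1-\frac2kc_\beta+\cdots)=1+O_k(1/k)$. For $0<\beta\le\ln k$ I would invoke the monotonicity behind Lemma~\ref{lem:monotony}: the computation in its proof shows that $\partial_\beta f_{d,\beta}$ is a non-decreasing function of $\|\rho\|_2^2$, so from $\|\rho'\|_2\le\|\rho\|_2$ the map $\beta\mapsto f_{d,\beta}(\rho')-f_{d,\beta}(\rho)$ is non-increasing, and the bound obtained at $\beta=\ln k$ propagates down to all smaller $\beta$.

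I expect the main obstacle to be quantitative rather than structural. The energy cost of flattening a row is of order $\frac{d}{2k^2}\|\rho_1\|_2^2$, which has the same order $(\ln k)/k$ as the entropy gain, so the inequality has only a constant factor to spare, and it is precisely the hypothesis that $d$ is essentially the first-moment value $2k\ln k$ that keeps the entropy term on top; a larger leading constant in $d$ would make the statement false. A more technical but routine point is the application of Lemma~\ref{lem:smax} in Case~A, where the threshold $\lambda/2-\ln\ln k/\ln k$ tends to $1/2$ and must be compared against $0.49$, so the reduction is valid only once $k$ is sufficiently large.
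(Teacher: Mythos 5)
Your proposal is correct and follows essentially the same route as the paper's own proof: the same normalisation to a constrained maximiser, the same two cases distinguished by whether the first row has a second entry of size at least $0.49$, the same use of Lemma~\ref{lem:smax} to flatten the light entries, the same replacement of the first row by $k^{-1}\vecone$, and the same numerical entropy/energy bounds followed by the monotonicity argument to pass from $\beta=\ln k$ to smaller $\beta$. Your explicit remark that the quantitative gap (not merely its sign) propagates to smaller $\beta$ because $\partial_\beta f_{d,\beta}$ is non-decreasing in $\|\rho\|_2^2$, and your application of Lemma~\ref{lem:smax} only to the columns with entries below $0.02$ in the second case, are in fact slightly more careful than the paper's wording, but the argument is the same.
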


\noindent
Generalizing \cite[\Lem~4.16]{Danny}, as  a next step we characterize the structure of the local maxima of $f_{d,\beta}$ on $\cS$.

\begin{lemma}
	\label{lem:FormMaxMatrix}
	Let $\beta>0$ and $d= 2k\ln k -c$, where $c=O_k(\ln k)$. Let $\rho\in \cSd$.
	\begin{enumerate}[(1)]
		\item Suppose that row $i\in[k]$ has no entries in $[0.49,0.51]$ and $\rho_{ij}\le 0.49$ for all $j\in[k]$. Let $\rho'$ be the stochastic matrix with entries
		\begin{align} 
			\rho'_{hj}=\rho_{hj}\text{ and }\rho'_{ij}=\frac{1}{k}\quad\text{ for all }j\in [k], h\in [k]\setminus\{i\}.\label{eq:bettermatrix}
		\end{align}
		Then $f_{d,\beta}(\rho)\le f_{d,\beta}(\rho')$.
		\item Suppose that row $i\in[k]$ has no entries in $[0.49,0.51]$ and $\rho_{ij}\ge 0.51$ for some $j\in [k]$. Then there is a number $\alpha=\frac1k+\tilde{O}_k(1/k^2)$ such that for the stochastic matrix $\rho''$ with entries
		\begin{align}
			\rho''_{hj}=\rho_{hj} \text{ and }\rho''_{ii}=1-\alpha, \rho''_{ih}=\frac{\alpha}{k-1}\quad\text{ for all }j\in [k], h\in [k]\setminus\{i\}\label{eq:bettermatrix2}
		\end{align}
		we have $f_{d,\beta}(\rho)\le f_{d,\beta}(\rho'')$.
		\item Let $\beta\le \ln k$. Suppose that row $i\in[k]$ has an entry $\rho_{ij}\in[0.49,0.51]$. Then the matrix $\rho'$ with \eqref{eq:bettermatrix} satisfies $f_{d,\beta}(\rho)\le f_{d,\beta}(\rho')$.
	\end{enumerate}
	\begin{proof}
		Claim (1) is an immediate consequence of Lemma \ref{lem:smax} when setting $J=[k]$, $\lambda=1$ and applying the $\rho\mapsto\hat\rho$ operation on the $i$-th row.
		
		For Claim (2) we may again assume that $i=j=1$ and therefore $\rho_{11}\ge0.51$. Let $\hat{\rho}\in\cS$ maximize $f_{d,\beta}$ subject to the conditions that $\hat{\rho}$ coincides with $\rho$ everywhere but in the first row and $\hat{\rho}_{11}\ge0.51$. A necessary condition for $\hat{\rho}$ to be maximal is that the mass in the remaining open entries is equally distributed. $\hat{\rho}_{11}\ge0.51$ implies that for all $j\ge2$ the entries $\hat{\rho}_{1j}$ are bounded by 0.49. Setting $\lambda=\ln(k-1)/\ln k$, Lemma \ref{lem:smax} applies to row $i=1$ and $J=\{2,\ldots,k\}$ confirming that for all $j\ge2$ we have $\hat{\rho}_{1j}=(1-\hat{\rho}_{11})/(k-1)$.
		
		Let $0\le \eps\le 0.49k$ be such that $\hat{\rho}_{11}=1-\eps/k$. To prove the assertion we need to show that $\eps=1+\tilde{O}_k(1/k)$. Set $\delta=\hat{\rho}_{11}-\hat{\rho}_{12}$. Then because $\hat{\rho}$ maximizes $f_{d,\beta}$ Claim \ref{claim:bestrho} implies that
		\begin{align}
			\text{either }\eps\in\{0,0.49k\}\text{, or } 1 +\frac{\delta}{\hat{\rho}_{12}}=\exp\left(\frac{dc_\beta^2 \delta}{k-2c_\beta+c_\beta^2\frac{\|\hat{\rho}\|_2^2}{k}}\right).\label{eq:epscases}
		\end{align}
		Equations \eqref{eq:derivF} and \eqref{eq:derivH} show that $\partial/\partial \rho_{11}H(\rho_1)$ tends to $-\infty$ as $\rho_{11}$ tends to 1, while $\partial/\partial \rho_{11}E(\rho_1)$ remains bounded. Hence, a maximal $\hat\rho$ is bound to satisfy $\eps>0$.
		
		By $\|\hat{\rho}\|_2^2\ge1$ we have $k-2c_\beta+c_\beta\frac{\|\hat{\rho}\|_2^2}{k}\ge k(1-c_\beta/k)^2$. Moreover we have $\delta=\hat{\rho}_{11}-O_k(1/k)$ due to all entries in the first row being $(1-\hat{\rho}_{11})/(k-1)$. With $d=2k\ln k + O_k(\ln k)$ and $\beta\ge \ln k$ we obtain
		\begin{align*}
			\exp\left(\frac{dc_\beta^2 \delta}{k-2c_\beta+c_\beta^2\frac{\|\hat{\rho}\|_2^2}{k}}\right)=k^{2\hat{\rho}_{11}}\left(1+\tilde{O}_k(1/k)\right) =k^{2(1-\eps/k)}\left(1+O_k(1/k)\right)
		\end{align*}
		and
		\begin{align*}
			1 +\frac{\delta}{\hat{\rho}_{12}}=\frac{\hat{\rho}_{11}}{\hat{\rho}_{12}}=\frac{(k-1)\hat{\rho}_{11}}{1-\hat{\rho}_{11}}=k^2(1/\eps-1/k)(1+O_k(1/k)).
		\end{align*}
		Thus, setting $\xi:\eps\mapsto k^{2\eps/k}(1/\eps-1/k)$ there is $\eta=O_k(\ln k/k)$ such that
		\begin{align}
			(1-\eta)\xi(\eps)\le \left(1+\frac{\delta}{\hat{\rho}_{12}}\right)\exp\left(\frac{dc_\beta^2 \delta}{k-2c_\beta+c_\beta\frac{\|\hat{\rho}\|_2^2}{k}}\right)\le 	(1+\eta)\xi(\eps).\label{eq:xibounds}
		\end{align}
		Fact \ref{lem:xi} reveals that $\xi$ has a unique local minimum in $\mu=\frac{k}{2}(1-\sqrt{1-2/\ln k})$ while $\xi$ is decreasing on $(0,\mu)$ and increasing on $(\mu,k/2)$. Furthermore we have $\xi(\eps)\in[-3/2,-1/2]$ for $\eps\in(0.99,1.01)$. Therefore, setting $\gamma=\ln^2k/k$, we have
		\begin{align*}
			\xi(\eps)\le\begin{cases}
				\xi(0.49k)\le k^{0.98}\left(\frac{1}{0.49k}-\frac{1}{k}\right)<\frac{1}{1+\eta}&\text{ for }\eps\in[\mu,0.49k]\\
				\xi(1+\gamma)<\frac{1}{1+\eta}&\text{ for }\eps\in[1+\gamma,\mu]
			\end{cases}
		\end{align*}
		and
		\begin{align*}
			\xi(\eps)\ge \xi(1-\gamma)>\frac{1}{1-\eta},\qquad\text{ for } \eps\in(0,1-\gamma).
		\end{align*}
		These bounds applied to \eqref{eq:xibounds} yield
		\begin{align}
			1 +\frac{\delta}{\hat{\rho}_{12}}-\exp\left(\frac{dc_\beta^2 \delta}{k-2c_\beta+c_\beta\frac{\|\hat{\rho}\|_2^2}{k}}\right)\begin{cases}>0&\text{ for }\eps\in(0,1-\gamma),\\
				<0&\text{ for }\eps\in[1+\gamma,0.49k].
			\end{cases}\label{eq:epsineq}
		\end{align}
		Altogether \eqref{eq:epscases} and \eqref{eq:epsineq} with $\eps>0$ imply $\eps=1+\tilde{O}_k(1/k)$ and therefore  $\hat{\rho}_{11}=1-1/k+\tilde{O}_k(1/k^2)$ by Claim \ref{claim:bestrho}. Hence $\hat\rho$ satisfies \eqref{eq:bettermatrix2} and $f_{d,\beta}(\hat{\rho})\ge f_{d,\beta}(\rho)$ for any $\beta\ge \ln k$. By Lemma \ref{lem:monotony} $f_{d,\beta}(\hat{\rho})\ge f_{d,\beta}(\rho)$ holds for any $0\le \beta\le \ln k$ as well.
		
		By definition of $\rho'$ Claim (3) is a Corollary of Lemma \ref{lem:NoHalfEntries}.
	\end{proof}
\end{lemma}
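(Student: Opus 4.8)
\emph{Proof plan.} All three claims concern a single row $i$, and in each case only the entries of row $i$ are modified, so one may freeze the rows $h\neq i$ throughout and optimise $f_{d,\beta}$ over the simplex of admissible $i$-th rows. The common toolkit is the maximum-entropy principle of Lemma~\ref{lem:smax}, the first-order sign test of Claim~\ref{claim:bestrho}, the no-half-entries estimate of Lemma~\ref{lem:NoHalfEntries}, and the $\beta$-monotonicity of Lemma~\ref{lem:monotony} (used to pass from $\beta\ge\ln k$ down to $\beta\le\ln k$). Claim~(1) is the shortest: since every entry of row $i$ lies in $[0,0.49]$, for $k$ large we have $\max_j\rho_{ij}\le 0.49<\tfrac12-\ln\ln k/\ln k$, so Lemma~\ref{lem:smax} with $J=[k]$ and $\lambda=1$ (the hypothesis $|J|\ge k^\lambda$ holding with equality) shows that replacing row $i$ by its average --- which is exactly the uniform row of \eqref{eq:bettermatrix} --- does not decrease $f_{d,\beta}$.

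For Claim~(2) assume without loss of generality that $i=j=1$, so $\rho_{11}\ge 0.51$, and let $\hat\rho$ maximise $f_{d,\beta}$ among all stochastic matrices that agree with $\rho$ off the first row and satisfy $\hat\rho_{11}\ge 0.51$; a maximiser exists by compactness, and $f_{d,\beta}(\hat\rho)\ge f_{d,\beta}(\rho)$ since $\rho$ itself is feasible. Because $\hat\rho_{11}\ge 0.51$ forces $\hat\rho_{1j}\le 0.49$ for $j\ge 2$, Lemma~\ref{lem:smax} applied to the first row with $J=\{2,\dots,k\}$ and $\lambda=\ln(k-1)/\ln k$ pins down $\hat\rho_{1j}=(1-\hat\rho_{11})/(k-1)$ for all $j\ge 2$; thus the first row of $\hat\rho$ is parametrised by a single number $\eps\in[0,0.49k]$ via $\hat\rho_{11}=1-\eps/k$. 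Since $\partial_{\rho_{11}}H$ diverges to $-\infty$ as $\rho_{11}\to 1$ while $\partial_{\rho_{11}}E$ stays bounded, $\eps=0$ is not optimal, so Claim~\ref{claim:bestrho} leaves only the possibility $\eps=0.49k$ or the first-order equation $1+\delta/\hat\rho_{12}=\exp\!\big(dc_\beta^2\delta/(k-2c_\beta+c_\beta^2\|\hat\rho\|_2^2/k)\big)$ with $\delta=\hat\rho_{11}-\hat\rho_{12}$.

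The heart of the matter is solving this equation asymptotically. Using $\|\hat\rho\|_2^2\in[1,k]$, so that the denominator equals $k\,(1+O_k(1/k))$, together with $\delta=\hat\rho_{11}+O_k(1/k)$, $d=2k\ln k+O_k(\ln k)$ and $\beta\ge\ln k$ (whence $c_\beta=1-O_k(1/k)$), the right-hand side becomes $k^{2\hat\rho_{11}}(1+\tilde{O}_k(1/k))=k^{2-2\eps/k}(1+\tilde{O}_k(1/k))$, while the left-hand side equals $(k-1)\hat\rho_{11}/(1-\hat\rho_{11})$. Rearranging, the equation reduces (up to a multiplicative $1+\tilde{O}_k(1/k)$) to $\xi(\eps)=1$ for the function $\xi$ of Fact~\ref{lem:xi}; since $\xi$ decreases on $(0,\mu)$ and increases on $(\mu,k/2)$, since $\xi(0.49k)<1$ excludes the endpoint branch, and since $\xi$ has slope of order $-1$ near $\eps=1$, the crossing is transversal and forces $\eps=1+\tilde{O}_k(1/k)$, i.e.\ $\hat\rho_{11}=1-1/k+\tilde{O}_k(1/k^2)$. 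Hence $\hat\rho$ has the form \eqref{eq:bettermatrix2} with $\alpha=1/k+\tilde{O}_k(1/k^2)$, which proves Claim~(2) for $\beta\ge\ln k$, and Lemma~\ref{lem:monotony} extends it to $0\le\beta\le\ln k$. Claim~(3) is then immediate: the proof of Lemma~\ref{lem:NoHalfEntries} already produces the matrix $\rho'$ of \eqref{eq:bettermatrix} (first row flattened to uniform) and shows $f_{d,\beta}(\rho')\ge f_{d,\beta}(\rho)+\tfrac{\ln k}{5k}$ for $\beta\le\ln k$, which is more than enough.

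I expect the main obstacle to be the $\eps$-analysis in Claim~(2). The map $\xi(\eps)=k^{2\eps/k}(\eps^{-1}-k^{-1})$ has an interior minimum at $\mu=\Theta(k/\ln k)$, so one cannot merely invoke monotonicity; the solution of the first-order equation must be confined to the thin window $\eps\in(1-\ln^2 k/k,\,1+\ln^2 k/k)$ by faithfully tracking the $1+O_k(\ln k/k)$ errors stemming from $c_\beta$, from $\|\hat\rho\|_2^2$, and from the $O_k(\ln k)$ slack in $d=2k\ln k-c$, and the spurious solution near $\eps=0.49k$ must be ruled out using $\xi(0.49k)<1$. The only other point requiring attention is verifying that $\beta$-monotonicity actually transports the inequality from $\beta=\ln k$ to smaller $\beta$ for the specific pairs of matrices involved.
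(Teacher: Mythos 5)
Your proposal follows essentially the same route as the paper: Claim (1) via Lemma \ref{lem:smax} with $J=[k]$, $\lambda=1$; Claim (2) by taking a constrained maximiser $\hat\rho$, flattening the off-diagonal entries of the first row with Lemma \ref{lem:smax}, reducing the first-order condition of Claim \ref{claim:bestrho} to the asymptotic equation $\xi(\eps)=1+\tilde O_k(1/k)$ and locating its root near $\eps=1$ via Fact \ref{lem:xi}, then transporting the conclusion to $\beta\le\ln k$ with Lemma \ref{lem:monotony}; and Claim (3) from Lemma \ref{lem:NoHalfEntries}. The argument is correct and matches the paper's proof step for step, including the exclusion of the boundary case $\eps\in\{0,0.49k\}$.
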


\noindent
The following Lemma, which extends~\cite[\Lem~4.14]{Danny} to finite $\beta$,
 estimates the function values attained at points near the ``candidate maxima'' from \Lem~\ref{lem:FormMaxMatrix}.
	
\begin{lemma}
	\label{lem:barmax}
	Let $\rho_s$ denote the matrix whose the top $s$ rows coincide with the identity matrix and whose last $k-s$ rows coincide with $\bar{\rho}$.
	If $\beta=\ln k$ and $d\le (2k-1)\ln k$ then $f_{d,\beta}(\bar{\rho})>f_{d,\beta}(\rho_s)$ for all $s=1,\ldots,k$.
\end{lemma}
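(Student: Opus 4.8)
The plan is a direct closed-form computation: I would evaluate $f_{d,\beta}$ at $\bar\rho$ and at $\rho_s$ and show the difference is positive for every $s\ge1$. Since $k^{-1}\bar\rho$ is the uniform distribution on $[k]\times[k]$ we have $H(k^{-1}\bar\rho)=2\ln k$, and $\|\bar\rho\|_2^2=1$; using $1-\frac2kc_\beta+\frac{c_\beta^2}{k^2}=(1-c_\beta/k)^2$ this yields
\[
f_{d,\beta}(\bar\rho)=2\ln k+d\ln\bc{1-c_\beta/k}.
\]
For $\rho_s$, each of the top $s$ rows of $k^{-1}\rho_s$ is a scaled standard basis vector contributing $\frac{\ln k}{k}$ to the entropy, while each of the remaining $k-s$ rows is uniform and contributes $\frac{2\ln k}{k}$, so $H(k^{-1}\rho_s)=(2-s/k)\ln k$; likewise $\|\rho_s\|_2^2=s+(k-s)/k=1+s(k-1)/k$. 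Therefore
\[
f_{d,\beta}(\bar\rho)-f_{d,\beta}(\rho_s)=\frac{s\ln k}{k}-\frac d2\ln\bc{1+\frac{s(k-1)c_\beta^2}{k^3(1-c_\beta/k)^2}}.
\]

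To conclude I would bound the logarithm using $\ln(1+x)\le x$, which is strict here because $s\ge1$ forces $x>0$. This reduces the claim to the inequality
\[
\frac d2\cdot\frac{(k-1)c_\beta^2}{k^2(1-c_\beta/k)^2}<\ln k.
\]
Now I substitute $c_\beta=1-1/k$, the value at $\beta=\ln k$ (which is where that hypothesis enters): then $k^2(1-c_\beta/k)^2=(k^2-k+1)^2/k^2$ and $(k-1)c_\beta^2=(k-1)^3/k^2$, so the left-hand side equals $\frac d2\cdot\frac{(k-1)^3}{(k^2-k+1)^2}$. Since $k^2-k+1>k(k-1)$ this is below $\frac d2\cdot\frac{k-1}{k^2}<\frac d{2k}$, and $\frac d{2k}<\ln k$ by the hypothesis $d\le(2k-1)\ln k$. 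Combining the two strict inequalities gives $f_{d,\beta}(\bar\rho)-f_{d,\beta}(\rho_s)>0$.

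There is no genuine obstacle here; the argument is elementary. The only points needing care are the bookkeeping of the entropy and Frobenius-norm contributions of the identity rows of $\rho_s$ and the rational estimate $(k^2-k+1)^2>k^2(k-1)^2$. Conceptually, the mechanism is that replacing $s$ rows of $\bar\rho$ by rows of the identity matrix raises $\|\rho\|_2^2$ only by the linear amount $s(k-1)/k$ while it costs $\frac{s\ln k}{k}$ in entropy, and at $d=2k\ln k-O(\ln k)$ the entropy cost wins — which is exactly why the threshold $d\le(2k-1)\ln k$ appears.
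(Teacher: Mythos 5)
Your proof is correct and follows essentially the same route as the paper's: explicit evaluation of the entropy and energy at $\bar\rho$ and at $\rho_s$ (with $H(k^{-1}\rho_s)=(2-s/k)\ln k$ and $\|\rho_s\|_2^2=s+(k-s)/k$), followed by a reduction to showing $\frac d2\ln(1+x)<\frac sk\ln k$ for $x=\frac{s(k-1)c_\beta^2}{k^3(1-c_\beta/k)^2}$. The only difference is in the closing estimate: you use the first-order bound $\ln(1+x)\le x$ together with exact arithmetic at $c_\beta=1-1/k$ and the observation $(k^2-k+1)^2>k^2(k-1)^2$, which cleanly yields $\frac d2 x<\frac sk\cdot\frac d{2k}\le\frac sk\ln k(1-\frac1{2k})$, whereas the paper carries a Mercator expansion to second order before verifying a more involved inequality; your version is simpler and all steps check out.
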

\begin{proof}
We have
	\begin{align*}
	H(k^{-1}\bar\rho)&=\ln k + \frac{1}{k}\sum_{i=1}^kH(\rho_i)=2\ln k,
	&
	E(\bar\rho)&=\frac{d}{2}\ln\left[1-\frac{2}{k}c_\beta+\frac{1}{k^2}c_\beta^2\right]=d\ln\left[1-\frac{c_\beta}{k}\right].
	\end{align*}
Further,
	\begin{align}
	H(k^{-1}\rho_s)&=\ln k + \frac{1}{k}\sum_{i=1}^kH(\rho_i)=\ln k + \frac{k-s}{k}\ln k,\label{eq:barmaxHS}\\
	E(\rho_s)&=\frac{d}{2}\ln\left[1-\frac{2}{k}c_\beta+\left(\frac{k-s}{k}+s\right)\frac{c_\beta^2}{k^2}\right].\label{eq:barmaxES}
	\end{align}
Hence,
	\begin{align*}
	f_{d,\beta}(\bar{\rho})&=2\ln k + d\ln[1-c_\beta/k],&
	f_{d,\beta}(\rho_s)&=\frac{2k-s}{k}\ln k + \frac{d}{2}\ln\left[1-\frac{2}{k}c_\beta+\left(\frac{k-s}{k}+s\right)\frac{c_\beta^2}{k^2}\right].
	\end{align*}
	The assertion $f_{d,\beta}(\bar{\rho})>f_{d,\beta}(\rho_s)$ holds iff $H(k^{-1}\bar{\rho})-H(k^{-1}\rho_s)=\frac{s}{k}\ln k > E(\rho_s)-E(\bar{\rho})$, i.e.
	\begin{align}
	E(\rho_s)-E(\bar{\rho})=\frac{d}{2}\ln\left[\frac{1-\frac{2}{k}c_\beta+\left(\frac{k-s}{k}+s\right)\frac{c_\beta^2}{k^2}}{(1-\frac{c_\beta}{k})^2}\right]=\frac{d}{2}\ln\left[1+\frac{\left(s-\frac{s}{k}\right)\frac{c_\beta^2}{k^2}}{\left(1-\frac{c_\beta}{k}\right)^2}\right]<\frac{s}{k}\ln k.\label{eq:barmaxineq}
	\end{align}
	Setting $x=\left(s-{s}/{k}\right){c_\beta^2}/{k^2}\left(1-{c_\beta}/{k}\right)^{-2}$ a mercator series expansion 
	\begin{align*}
	\frac{d}{2}\ln(1+x)=\frac{d}{2}\left[x-\frac{x^2}{2}+O_k(x^3)\right]\le \frac{2k\ln k-\ln k}{2}\left[x-\frac{x^2}{2}\right]=\ln k\left[kx-k\frac{x^2}{2}-\frac{x}{2}+\frac{x^2}{4}\right]
	\end{align*}
	along with the representation
	\begin{align*}
	\frac{\left(s-\frac{s}{k}\right)\frac{c_\beta^2}{k^2}}{\left(1-\frac{c_\beta}{k}\right)^2}&=\frac{c_\beta^2}{k}\frac{s}{k}\left(1-\frac{1}{k}\right)\frac{1}{(1-c_\beta/k)^2}=\frac{c_\beta^2}{k}\frac{s}{k}\left(1-\frac{1}{k}\right)\left(1+2c_\beta/k+O_k(1/k^2)\right)\\
	&=\frac{1}{k}\frac{s}{k}\left(1-\frac{2}{k}+O_k(k^{-2})\right)\left(1+\frac{1}{k}+O_k(k^{-2})\right)\tag*{[as $\beta=\ln k$]}
	\end{align*}
	reduces the proof to validating the inequality
	\begin{align}
	\frac{(k-1/2)}{k}\left(1-\frac{2}{k}+O_k(k^{-2})\right)\left(1+\frac{1}{k}+O_k(k^{-2})\right)+\frac{(1/4-k/2)}{k^2}\left[\left(1-\frac{2}{k}+O_k(k^{-2})\right)\left(1+\frac{1}{k}+O_k(k^{-2})\right)\right]^2\frac{s}{k}<1
	.\label{eq:barmaxcrit}
	\end{align}
	This is indeed true, since the first summand is bounded by $1-k^{-2}$ and the second summand is negative.
\end{proof}

\begin{corollary}
	\label{cor:monotonb}
	With $\rho_s$ defined as in Lemma \ref{lem:barmax} the inequality $f_{d,\beta}(\bar{\rho})>f_{d,\beta}(\rho_s)$ holds for all $s<k$ and $0<\beta\le \ln k$.
\end{corollary}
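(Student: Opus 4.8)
The plan is to obtain \Cor~\ref{cor:monotonb} as a short consequence of \Lem~\ref{lem:barmax} together with the monotonicity lemma \Lem~\ref{lem:monotony}: the former already pins down the inequality $f_{d,\beta}(\bar\rho)>f_{d,\beta}(\rho_s)$ at the single temperature $\beta=\ln k$, and we extend it to all smaller $\beta$ by comparing $\beta$-derivatives. First I would record that $\rho_s$ is a genuine singly-stochastic matrix in the sense of \Lem~\ref{lem:monotony}: by construction its first $s$ rows are rows of the $k\times k$ identity matrix and its remaining $k-s$ rows equal $\bar\rho=k^{-1}\vecone$, so every row is a probability vector, i.e.\ $\rho_s\in\cS$. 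A direct computation gives $\|\rho_s\|_2^2=s(1-1/k)+1$, which for $1\le s\le k-1$ lies strictly between $1$ and $k$; in particular $\rho_s\neq\bar\rho$.

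Next I would apply \Lem~\ref{lem:monotony} with $\rho=\rho_s$. That lemma yields $\frac{\partial}{\partial\beta}f_{d,\beta}(\bar\rho)\le\frac{\partial}{\partial\beta}f_{d,\beta}(\rho_s)$ for every $\beta\ge0$, so the map $\beta\mapsto g(\beta):=f_{d,\beta}(\bar\rho)-f_{d,\beta}(\rho_s)$ is non-increasing; indeed, since $\|\rho_s\|_2^2>1$, the right-hand side of \eqref{eq:mony} is strictly positive, so the derivative comparison is strict and $g$ is strictly decreasing, although mere monotonicity is all we use. On the other hand, \Lem~\ref{lem:barmax}, whose hypothesis $d\le(2k-1)\ln k$ holds in the regime of interest, gives $g(\ln k)>0$. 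Combining the two, $g(\beta)\ge g(\ln k)>0$ for every $0<\beta\le\ln k$, which is precisely the assertion $f_{d,\beta}(\bar\rho)>f_{d,\beta}(\rho_s)$.

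I do not anticipate a real obstacle here: the argument is a routine monotonicity extension, and the only points needing care are (i) verifying the hypotheses of \Lem~\ref{lem:monotony}, namely $\rho_s\in\cS$, and (ii) noting that the strict inequality at $\beta=\ln k$ survives the extension simply because $g$ is non-increasing and already strictly positive at $\beta=\ln k$, so no strictness of the derivative comparison is even required. The same reasoning in fact covers $s=k$ as well, since $\|\rho_k\|_2^2=k$ still lies in the range $[1,k]$ on which \Lem~\ref{lem:monotony} operates; we only state the bound for $s<k$ because that is the range used in the sequel.
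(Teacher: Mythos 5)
Your proposal is correct and matches the paper's (implicit) argument: the corollary is exactly the combination of \Lem~\ref{lem:barmax} at $\beta=\ln k$ with the $\beta$-monotonicity of \Lem~\ref{lem:monotony}, applied to $\rho_s\in\cS$. Your extra care in noting that the non-strict derivative comparison still preserves the strict inequality at $\beta=\ln k$ (since $g(\beta)\ge g(\ln k)>0$ for $\beta\le\ln k$) is exactly the right way to close the only subtle point.
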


\begin{proof}[Proof of Proposition \ref{prop:maxlnkh}]
	In the case $\beta=0$ we have $f_{d,\beta}(\rho)=H(k^{-1}\rho)$. On $[0,1]^{k\times k}\supset \cS$ the entropy function is maximized by the uniform distribution on $[k]^2$, i.e. the matrix $\bar{\rho}$.
	Consider the case $0<\beta\le\ln k$. Because $\rho$ is stochastic each row of $\rho$ has at most one entry greater than 0.51. We call $\rho$ $s$-stable if there are precisely $s$ rows with entries greater than 0.51. Let $\rho_s$ denote the matrix where the top $s$ rows coincide with the identity matrix and the last $k-s$ rows with $\bar{\rho}$. For any $s\in\{0,1,\ldots,k\}$ and any $s$-stable matrix $\rho$, using Lemma \ref{lem:FormMaxMatrix} we obtain a matrix $\rho'$ such that $f_{d,\beta}(\rho')\ge f_{d,\beta}(\rho)$ where $\rho'$ is achieved by moving from $\rho$ in direction $\rho_s$. Together with Corollary \ref{cor:monotonb} this yields the assertion.
\end{proof}	

\begin{proof}[Proof of Proposition \ref{prop:maxlnkh0}]
	For any choice of $n, \beta$ or $d$ Jensen's inequality shows
	\begin{align}
	\frac{1}{n}\ln\Erw[\Zbg{\G}]\ge\frac{1}{n}\Erw[\ln\Zbg{\G}].\label{eq:jensen}
	\end{align}
	We claim that $d\in[2(k-1)\ln (k-1), (2k-1)\ln k- 2]$ and $\beta\le\ln k$ allows for
	\begin{align}
	\frac{1}{n}\ln\Erw[\Zbg{\G}]\le\frac{1}{n}\Erw[\ln\Zbg{\G}]+o(1).\label{eq:reversal}
	\end{align}
	By \eqref{eq:balO}, there is $C_b>$ such that 
	\begin{align}
	\Erw[\Zbg{\G}]\le C_b\Erw[\Zkb{\G}].\label{eq:OCB}
	\end{align} Hence, combining Propositions \ref{prop:exppairs} and \ref{prop:maxlnkh} we have 
	\begin{align*}\Erw[\Zkb{\G}^2]=\sum_{\rho\in\cR} \exp(nf_{d,\beta}(\rho)+o(n))\le \exp(o(n))\exp(nf_{d,\beta}(\bar{\rho})/2)\le \exp(o(n))\Erw[\Zkb{\G}]^2.
	\end{align*}
	Analogously to the proof of Corollary \ref{cor:energy} we apply the Paley-Zigmund inequality and obtain
	\begin{align*}
	\liminf_{n\to\infty}\pr[n^{-1}\ln(\Zbg{\G})\ge n^{-1}\ln\Erw[\Zbg{\G}]-o(1)]\ge \exp(o(n)).
	\end{align*} The concentration result in Fact \ref{lem:logconc} therefore yields
	$\frac{1}{n}\Erw [\ln \Zbg{\G}]\ge\frac{1}{n}\ln\Erw [\Zbg{\G}] -o(1)$.
\end{proof}
	
\section{High degree, low temperature: the first moment} \label{sec:sep1mm}

\noindent
{\em Throughout this section we assume that $d\in[2(k-1)\ln (k-1),(2k-1)\ln k-2]$ and $\beta\ge \ln k$.}
In this section we prove \Prop~\ref{prop:expsepbal}.
The principal tool is going to be the following experiment called the
\emph{planted model};
	similar constructions for hypergraph $2$-coloring or $k$-SAT played an important role in~\cite{Victor,h2c}.

\begin{description}
	\item[PM1] Choose a map $\hat\SIGMA:[n]\to[k]$ uniformly at random.
	\item[PM2] Letting
	\begin{align*}
	p_1=\frac{dk\exp(-\beta)}{n(k-c_\beta)},\qquad p_2=\frac{dk}{n(k-c_\beta)},
	\end{align*}
	obtain a random graph $\hat\G$ on $[n]$ by independently including every edge $\{v,w\}$ of the complete graph such that
		$\hat\SIGMA(v)\neq\hat\SIGMA(w)$ with probability $p_2$ and every edge $\{v,w\}$ such that
		$\hat\SIGMA(v)=\hat\SIGMA(w)$ with probability $p_1$.
\end{description}

\noindent
The following lemma sets out the connection between the planted model and the first moment.

\begin{lemma}\label{lem:contig}
If $\cA$ is a set of graph/color assignment pairs $(G,\sigma)$ such that
	$\pr\brk{(\hat\G,\hat\SIGMA)\in\cA|\hat\SIGMA\in\cB}=o(n^{-1/2})$, then
	$$\Erw\sum_{\sigma\in\cB}\exp(-\beta\cH_{\G}(\sigma))\vecone_{\{(\G,\sigma)\in\cA\}}=o(\Erw[\Zkb{\G}]).$$
\end{lemma}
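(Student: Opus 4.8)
The plan is to establish a change-of-measure identity showing that the left-hand side is, up to the factor $\Erw[\Zkb{\G}]$, exactly the probability that $(\hat\G,\hat\SIGMA)\in\cA$ conditioned on $\hat\SIGMA\in\cB$. First I would compute the first moment $\Erw[\Zkb{\G}]$ together with the joint distribution of the pair $(\G,\sigma)$ where $\sigma$ ranges over $\cB$ and is weighted by $\exp(-\beta\cH_{\G}(\sigma))$. The key observation is that the measure on pairs $(G,\sigma)$ defined by
\begin{align*}
\pr[(\G,\sigma)=(G,\tau)]\propto\vecone_{\{\tau\in\cB\}}\exp(-\beta\cH_G(\tau))\pr[\G=G]
\end{align*}
is, after normalization, essentially the planted model $(\hat\G,\hat\SIGMA)$ conditioned on $\hat\SIGMA\in\cB$. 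More precisely, sampling $\hat\SIGMA$ uniformly and then including monochromatic edges with probability $p_1$ and bichromatic edges with probability $p_2$ as in \textbf{PM1}--\textbf{PM2} reweights each graph $G$ by a factor proportional to $p_1^{\#\text{mono}}(1-p_1)^{\cdots}p_2^{\#\text{bi}}(1-p_2)^{\cdots}$, and because $p_1/p_2=\exp(-\beta)$ the ratio of the two edge-probabilities accounts precisely for the Boltzmann factor $\exp(-\beta\cH_G(\tau))$, while the remaining factors, after summing over $\tau\in\cB$, contribute only a subexponential (indeed $\Theta(\sqrt n)$-type) correction because $\G=\G(n,m)$ has a \emph{fixed} number of edges whereas $\hat\G$ is a binomial-type model.

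Concretely, I would proceed as follows. Step one: express $\Erw\sum_{\sigma\in\cB}\exp(-\beta\cH_{\G}(\sigma))\vecone_{\{(\G,\sigma)\in\cA\}}$ as a sum over $\tau\in\cB$ and over graphs $G$ with exactly $m$ edges, of $\pr[\G=G]\exp(-\beta\cH_G(\tau))\vecone_{\{(G,\tau)\in\cA\}}$. Step two: relate $\pr[\G=G]\exp(-\beta\cH_G(\tau))$ to $\pr[\hat\G=G\mid\hat\SIGMA=\tau]$ by comparing the $\gnm$ model with the planted binomial model; the Poissonisation/conditioning argument (conditioning $\hat\G$ on having exactly $m$ edges) shows these agree up to a factor that depends only on $m$, $n$, and the number of monochromatic \emph{slots} $\Forb(\tau)$, which by \eqref{eqForb0} is the same for all $\tau\in\cB$ up to $O(n)$ error, hence up to an $\exp(o(n))$ factor. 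Step three: sum over $\tau$; since $|\cB|=\Theta(k^n)$ and the per-$\tau$ weight is uniform up to the above corrections, one gets $\Erw[\Zkb{\G}]\cdot\pr[(\hat\G,\hat\SIGMA)\in\cA\mid\hat\SIGMA\in\cB]$ up to a multiplicative $\exp(o(n))$ — but that is too lossy. The fix, and the point of the $o(n^{-1/2})$ hypothesis, is to be more careful: the local central limit theorem for the number of monochromatic edges in $\hat\G$ gives that $\pr[\hat\G$ has $m$ edges$]=\Theta(n^{-1/2})$, so conditioning costs only a $\Theta(n^{-1/2})$ factor, not $\exp(o(n))$; consequently the identity becomes $\Erw\sum_{\sigma\in\cB}\exp(-\beta\cH_{\G}(\sigma))\vecone_{\{(\G,\sigma)\in\cA\}}=\Theta(\sqrt n)\,\Erw[\Zkb{\G}]\,\pr[(\hat\G,\hat\SIGMA)\in\cA\mid\hat\SIGMA\in\cB]$, and the $o(n^{-1/2})$ bound on the conditional probability then makes the right-hand side $o(\Erw[\Zkb{\G}])$.

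The main obstacle I anticipate is the bookkeeping in Step two: making the comparison between $\gnm$ and the planted binomial model precise enough to control the ratio of normalizing constants to within a constant factor (rather than merely $\exp(o(n))$). This requires a local limit theorem: one writes $\hat\G$'s total edge count as a sum of two independent binomials (over monochromatic and bichromatic slots) with parameters chosen exactly so that the mean equals $m=\lceil dn/2\rceil$ — this is why $p_1,p_2$ are defined as in \textbf{PM2}, the normalization $k-c_\beta$ precisely balancing the two contributions — and then invokes a local CLT to get $\pr[e(\hat\G)=m]=\Theta(n^{-1/2})$ \emph{uniformly} over $\hat\SIGMA\in\cB$ (the variance is $\Theta(n)$ and bounded away from $0$ and $\infty$ on $\cB$ by \eqref{eqForb0}). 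A secondary subtlety is handling the $O(n)$-fluctuation in $\Forb(\tau)$ across $\tau\in\cB$; but since this only enters through ratios of binomial coefficients $\binom{\Forb(\tau)}{m_1}\binom{\binom n2-\Forb(\tau)}{m-m_1}$ already analyzed in \eqref{eqForb0}--\eqref{eqForb1}, it contributes a uniform $1+o(1)$ factor after the local-limit normalization. Granting these two analytic inputs, the proof is a direct substitution: the claimed conclusion follows from the displayed identity and the hypothesis $\pr[(\hat\G,\hat\SIGMA)\in\cA\mid\hat\SIGMA\in\cB]=o(n^{-1/2})$.
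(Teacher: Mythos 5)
Your proposal is correct and follows essentially the same route as the paper: both rest on the change of measure between $\G(n,m)$ and the planted binomial model, with $p_1/p_2=\exp(-\beta)$ producing the Boltzmann factor, the normalization $k^{-1}p_1+(1-k^{-1})p_2=d/n$ matching the edge density, the $O(n)$ fluctuation of $\Forb(\tau)$ over $\cB$ absorbed into a $\Theta(1)$ factor, and the local limit theorem $\pr[|E(\hat\G)|=m]=\Theta(n^{-1/2})$ explaining the $o(n^{-1/2})$ hypothesis. The only nitpick is that your displayed relation with $\Theta(\sqrt n)$ should be the one-sided bound $\pr[(\hat\G,\hat\SIGMA)\in\cA\mid\hat\SIGMA\in\cB,|E(\hat\G)|=m]\le O(\sqrt n)\,\pr[(\hat\G,\hat\SIGMA)\in\cA\mid\hat\SIGMA\in\cB]$, since $\cA$ may occur mostly on graphs with $|E(\hat\G)|\neq m$; this is exactly the direction the paper uses and the conclusion is unaffected.
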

\begin{proof}
Because $k^{-1}p_1+(1-1/k)p_2=d/n$, the expected number of edges of $\hat\G$ is $m+O(\sqrt n)$.
Hence, the assumption $\pr\brk{(\hat\G,\hat\SIGMA)\in\cA|\hat\SIGMA\in\cB}=o(n^{-1/2})$ implies that
	\begin{align}\label{eqcontig1}
	\pr\brk{(\hat\G,\hat\SIGMA)\in\cA|\hat\SIGMA\in\cB,\,|E(\hat\G)|=m}=o(1).
	\end{align}
Writing out the l.h.s.\ of (\ref{eqcontig1}), we obtain
	\begin{align*}
	\pr\brk{(\hat\G,\hat\SIGMA)\in\cA|\hat\SIGMA\in\cB,\,|E(\hat\G)|=m}\\
		&\hspace{-4cm}=
		\Theta(k^{-n})\sum_{(G,\sigma)\in\cA,\sigma\in\cB,|E(G)|=m}
			\frac{p_1^{\cH_G(\sigma)}p_2^{m-\cH_G(\sigma)}(1-p_1)^{\Forb(\sigma)-\cH_G(\sigma)}(1-p_2)^{\bink n2-\Forb(\sigma)-m+\cH_G(\sigma)}}
				{\pr\brk{|E(\hat\G)=m|}}\\
		&\hspace{-4cm}=\frac{\Theta(k^{-n})}{(1-c_\beta/k)^m}\bcfr dn^m
			\sum_{(G,\sigma)\in\cA,\sigma\in\cB,|E(G)|=m}
				\frac{\exp(-\beta \cH_G(\sigma))
					(1-p_1)^{k^{-1}\bink n2-\cH_G(\sigma)}(1-p_2)^{(1-k^{-1})\bink n2-m+\cH_G(\sigma)}
					}{\pr\brk{\Bin(\bink n2,d/n)=m}};
	\end{align*}
in the last step we used (\ref{eqForb0}) and the observation that  $k^{-1}p_1+(1-1/k)p_2=d/n$.
Further, combining the above with (\ref{eq:balO}), we get
	\begin{align*}
	\pr\brk{(\hat\G,\hat\SIGMA)\in\cA|\hat\SIGMA\in\cB,\,|E(\hat\G)|=m}\Erw[\Zkb{\G}]\\
		&\hspace{-4cm}=
		\Theta(1)\bink{\bink n2}{m}^{-1}
		\sum_{(G,\sigma)\in\cA,\sigma\in\cB,|E(G)|=m}\exp(-\beta\cH_G(\sigma))\bcfr{1-p_1}{1-p}^{k^{-1}\bink n2-\cH_G(\sigma)}
			\bcfr{1-p_2}{1-p}^{(1-k^{-1})\bink n2-m+\cH_G(\sigma)}\\
		&\hspace{-4cm}=
				\Theta(1)\bink{\bink n2}{m}^{-1}\sum_{(G,\sigma)\in\cA,\sigma\in\cB,|E(G)|=m}\exp(-\beta\cH_G(\sigma))
				=\Theta(1)\sum_{\sigma\in\cB}\Erw\brk{\exp(-\beta\cH_{\G}(\sigma))\vecone_{\{(\G,\sigma)\in\cA\}}}.
	\end{align*}
Thus, the assertion follows from (\ref{eqcontig1}).
\end{proof}

\noindent
We are going to combine \Lem~\ref{lem:contig} with the following proposition, which shows that separability is a likely event in the planted model.

\begin{proposition}	\label{prop:sepbal}
We have $\pr[\hat\SIGMA\text{ is separable in }\hat\G|\hat\SIGMA\in\cB]=1-o(n^{-1/2})$.
\end{proposition}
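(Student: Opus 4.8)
The plan is to verify the two defining properties of separability in the planted model separately, conditioning throughout on the likely event $\hat\SIGMA\in\cB$ (so all colour classes have size $n/k+O(\sqrt n)$). For \textbf{SEP1}, fix a colour $i\in[k]$ and observe that, given $\hat\SIGMA$, the number of edges of $\hat\G$ inside $\hat\SIGMA^{-1}(i)$ is a binomial random variable $\Bin\bigl(\binom{|\hat\SIGMA^{-1}(i)|}{2},p_1\bigr)$ with mean $\sim \frac{d\exp(-\beta)}{2k}\,n$. Since the required bound in \textbf{SEP1} is $2n\exp(-\beta)k^{-1}\ln k$, which (using $\beta\ge\ln k$ and $k$ large) exceeds the mean by at least a factor of $3$ say, the Chernoff bound of Lemma~\ref{lem:chernoff} gives a failure probability $\exp(-\Omega(n))$ for each $i$; a union bound over the $k$ colours still leaves $\exp(-\Omega(n))=o(n^{-1/2})$. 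This also shows that $\hat\SIGMA\in\Sigma_{\hat\G,\beta}$ with probability $1-o(n^{-1/2})$.

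For \textbf{SEP2} I would argue by a first-moment (union bound) over the ``bad'' event that there exists some $\tau\in\Sigma_{\hat\G,\beta}$ and indices $i,j$ with $\rho_{ij}(\hat\SIGMA,\tau)\in[0.51,1-\kappa)$, where $\kappa=\ln^{20}k/k$. The key point is that if $\tau\in\Sigma_{\hat\G,\beta}$ and $\rho_{ij}(\hat\SIGMA,\tau)\ge 0.51$, then the set $S=\hat\SIGMA^{-1}(i)\cap\tau^{-1}(j)$ has size at least $0.51\,n/k$, and \emph{both} $\hat\SIGMA$ and $\tau$ are constant on $S$; hence $S$ spans few edges of $\hat\G$ — at most the \textbf{SEP1} bound for $\hat\SIGMA$ and at most the analogous $O(n\exp(-\beta)k^{-1}\ln k)$ bound for $\tau$. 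But the set $W=\hat\SIGMA^{-1}(i)$ has size $\sim n/k$, and on the edges of $\hat\G$ inside $W$ each present with probability $p_1$, a set $S\subseteq W$ of relative size in $[0.51,1-\kappa)$ that spans few $\hat\G$-edges is atypical: one estimates, for each fixed $i,j$ and each target size $|S|=s$, the probability over $\hat\G$ that some $s$-subset of $W$ spans at most $t:=4n\exp(-\beta)k^{-1}\ln k$ edges, by $\binom{|W|}{s}\pr[\Bin(\binom s2,p_1)\le t]$. Taking logarithms, $\log\binom{|W|}{s}=O(n/k)$ whereas $\binom s2 p_1\sim \frac{d\exp(-\beta)}{2k}s^2/(n/k)^2\cdot\binom{n/k}{?}$... more precisely $\binom s2 p_1$ is of order $(s/(n/k))^2\cdot n\exp(-\beta)/k$, which for $s\ge 0.51\,n/k$ is at least $\sim 0.13\,n\exp(-\beta)\ln k\cdot$ (a constant), comfortably exceeding $t$ by a large constant factor for $k$ large; so the Chernoff lower-tail bound gives $\pr[\Bin(\binom s2,p_1)\le t]=\exp(-\Omega(n/k))$ with the implied constant beating the $O(n/k)$ from the binomial coefficient once $k$ is large. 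Summing over the $O(n)$ choices of $s$, over $i,j\in[k]$, and over the two colours $\tau$ could be coloured with (really: over the colour $j$ that $\tau$ assigns to $S$, plus the implicit universal quantifier over $\tau$), the total is still $\exp(-\Omega(n/k))=o(n^{-1/2})$.

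Combining the two parts by a union bound gives $\pr[\hat\SIGMA\text{ not separable in }\hat\G\mid\hat\SIGMA\in\cB]=o(n^{-1/2})$, which is the claim. The \textbf{main obstacle} is the bookkeeping in the \textbf{SEP2} estimate: one must set it up so that a \emph{single} large deviation event for $\hat\G$ (``some $\ge0.51\,n/k$-subset of a fixed colour class spans few edges'') covers \emph{all} possible competitors $\tau$ simultaneously, rather than trying to union-bound over the doubly-exponentially many maps $\tau$; the observation that makes this work is that membership of $\tau$ in $\Sigma_{\hat\G,\beta}$, together with $\rho_{ij}(\hat\SIGMA,\tau)\ge0.51$, forces the intersection set $S$ to be simultaneously nearly-monochromatic-and-sparse under $\hat\G$, a property that depends only on $\hat\G$ and on $S$, not on $\tau$ itself. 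One then only needs the entropy of choosing $S$ (an $O(n/k)$ term) to be dominated by the large-deviation rate, which is where the gap $\beta\ge\ln k$ and largeness of $k$ are used; I would double-check that the constant $0.51$ (as opposed to, say, $1/2$) is genuinely needed to keep $s^2$ bounded away from the regime where the rate degrades, and that $\kappa=\ln^{20}k/k$ is large enough that the ``upper'' end of the forbidden interval causes no trouble (it does not, since we only used the lower bound $s\ge0.51\,n/k$).
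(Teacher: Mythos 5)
The \textbf{SEP1} half of your plan is fine and coincides with the paper's: the number of edges inside a colour class of $\hat\SIGMA$ is $\Bin\bigl(\binom{|V_i|}{2},p_1\bigr)$ with mean $(1+o_k(1))nk^{-1}\exp(-\beta)\ln k$, half the \textbf{SEP1} threshold, and Chernoff plus a union bound over $i$ finishes it.

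The \textbf{SEP2} half has a fatal gap: the event you propose to union-bound over is not a rare event. For $S\subseteq W=\hat\SIGMA^{-1}(i)$ of size $s=\alpha n/k$ the mean number of $\hat\G$-edges inside $S$ is
$\binom{s}{2}p_1\sim\frac{\alpha^2n^2}{2k^2}\cdot\frac{dk\exp(-\beta)}{n(k-c_\beta)}\sim\alpha^2 nk^{-1}\exp(-\beta)\ln k$,
which is \emph{at most a quarter of} your threshold $t=4nk^{-1}\exp(-\beta)\ln k$ (your intermediate estimate ``$0.13\,n\exp(-\beta)\ln k$'' drops a factor $1/k$). So $\pr[\Bin(\binom s2,p_1)\le t]$ is close to $1$, not $\exp(-\Omega(n/k))$: a typical large subset of a colour class is exactly as sparse as the subsets produced by competitors $\tau$, because the whole colour class is already that sparse. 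Worse, the defect is structural and cannot be repaired by tuning $t$: any large-deviation rate for counts of \emph{monochromatic} edges is at most $O(nk^{-1}\exp(-\beta)\ln k)$, which for large $\beta$ (and $\beta$ is unbounded in this section) is negligible compared with the entropy $\ln\binom{|W|}{s}=\Theta(n/k)$ of choosing $S$. Sparseness of $S$ under $\hat\SIGMA$ simply carries no information at low temperature. The paper's proof instead extracts its large deviation from \emph{bichromatic} edges, which occur with probability $p_2$ and suffer no $\exp(-\beta)$ suppression: Lemma~\ref{lem:L1} shows that w.h.p.\ no set $S\subseteq V_i$ with $\frac kn|S|=\alpha\in[0.501,1-k^{-0.499}]$ admits more than $\frac nk(1-\alpha-\kappa)-n^{2/3}$ vertices outside $V_i$ having no neighbour in $S$ (here the union bound over $S$ is beaten because the relevant binomial has mean $\approx nk^{-2\alpha}\ll n/k$), whereas a competitor $\tau$ with overlap $\alpha$ on $S$ forces about $(1-\alpha)\frac nk$ such vertices. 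This pushes $\alpha$ above $1-k^{-0.49}$, and a separate argument (Lemma~\ref{lem:L2} on vertices with fewer than $15$ neighbours in $V_i$, together with Claim~\ref{claim:smalledgespan} on the sparsity of small subgraphs) excludes the remaining window $\alpha\in(1-k^{-0.49},1-\kappa)$ — a regime your proposal does not address at all.
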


To prove Proposition \ref{prop:sepbal} we generalize the argument for proper $k$-colorings from \cite[Section 3]{Danny} to the Potts antiferromagnet.
In the following we let $V_i=\hat\SIGMA^{-1}(i)$ for $i\in[k]$.

\begin{lemma}	\label{lem:L1}
Let $i\in[k]$.
For $S\subset V_i$ let $X_{S,i}=|\cbc{v\in V\setminus V_i:\partial_{\hat\G} v\cap S = \emptyset}|$.
Given $\hat\SIGMA\in\cB$  the following statement holds with probability $1-\exp(-\Omega(n))$.
	\begin{equation}\label{eqPM1}
	\parbox{14cm}{Let $i\in[k]$.
	Then for all $S\subset V_i$ of size $\frac kn|S|\in[0,501,1-k^{-0.499}]$
	we have $X_{S,i}\leq\frac{n}{k}(1-\alpha-\kappa)-n^{2/3}$.}
	\end{equation}
\end{lemma}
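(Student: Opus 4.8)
Fix $i\in[k]$ and a set $S\subseteq V_i$ with $\frac kn|S|\in[0.501,1-k^{-0.499}]$. For a vertex $v\in V\setminus V_i$, the events $\{\partial_{\hat\G}v\cap S=\emptyset\}$ are, conditionally on $\hat\SIGMA$, mutually independent over such $v$, and each occurs with probability exactly $(1-p_2)^{|S|}$ since every potential edge from $v$ to a vertex of $S$ (which lies in a different color class) is present independently with probability $p_2$. Hence $X_{S,i}$ is a binomial random variable $\Bin(|V\setminus V_i|,(1-p_2)^{|S|})$ with $|V\setminus V_i|=(1-1/k)n+O(\sqrt n)$ by balancedness. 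The first step is therefore to compute $\Erw[X_{S,i}]$ to leading order: writing $\frac kn|S|=\theta\in[0.501,1-k^{-0.499}]$ and using $p_2=\frac{dk}{n(k-c_\beta)}=\frac{d}{n}(1+O(1/k))$ together with $d=2k\ln k-c$, we get $(1-p_2)^{|S|}=\exp(-p_2|S|(1+o(1)))=\exp(-\theta\,\frac{d}{k}(1+O(1/k)))=k^{-2\theta(1+o_k(1))}$, so $\Erw[X_{S,i}]=\frac nk(k-1)k^{-2\theta(1+o_k(1))}$.

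**Reducing to the extremal $\theta$ and to a Chernoff bound.**
Since $\theta\mapsto k^{-2\theta}$ is decreasing, over the stated range the expectation is largest at $\theta=0.501$, where $\Erw[X_{S,i}]\le \frac nk(k-1)k^{-1.002(1+o_k(1))}=o(n/k^{1+\Omega(1)})$ — in particular this is by a $k^{\Omega(1)}$ factor smaller than the target $\frac nk(1-\alpha-\kappa)=\frac nk(1-1/k+o_k(1/k)-\ln^{20}k/k)=\Theta(n/k)$, which is positive and of order $n/k$ for large $k$ (recall $\alpha=1/k+\tilde O_k(1/k^2)$ from Lemma~\ref{lem:FormMaxMatrix}, so $1-\alpha-\kappa$ is bounded away from $0$). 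Thus for each individual $S$ the desired inequality $X_{S,i}\le \Erw[X_{S,i}]+ (\Theta(n/k)-\Erw[X_{S,i}])$ requires a deviation of order $n/k$, which is a constant-factor-of-$n$ deviation above the mean; Lemma~\ref{lem:chernoff} (the Chernoff bound for binomials) gives $\pr[X_{S,i}>\tfrac nk(1-\alpha-\kappa)-n^{2/3}]\le\exp(-\Omega(n/k))\le\exp(-\Omega(n))$ since $k$ is a fixed constant. Actually, to be safe one takes $t$ large in Lemma~\ref{lem:chernoff}: with $\mu=\Erw[X_{S,i}]=o(n/k)$ and target $t\mu=\Theta(n/k)$ one has $t=k^{\Omega(1)}\to\infty$, giving a bound $\exp[-t\mu\ln(t/e)]\le\exp[-\Omega(n\ln k/k)]\le \exp(-\Omega(n))$.

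**Union bound over $S$ and over $i$.**
The number of sets $S\subseteq V_i$ is at most $2^{|V_i|}\le 2^{n}$, and there are only $k=O(1)$ choices of $i$, so a union bound costs a factor $2^n$. The final step is to check that the per-set failure probability beats this: we need $\exp(-\Omega(n\ln k/k))\cdot 2^n=o(1)$, i.e.\ $\Omega(\ln k/k)>\ln 2$ — which fails for large $k$! So the naive union bound over all $S$ is too lossy, and this is the main obstacle. The fix, exactly as in \cite[Section~3]{Danny}, is to union-bound only over the \emph{size} $s=|S|$ (at most $n+1$ values) after observing that the event in \eqref{eqPM1} for a given size is monotone: if $X_{S,i}$ is small then so is $X_{S',i}$ for any $S'\supseteq S$, so it suffices to control $\max_{|S|=s}X_{S,i}$, and since $X_{S,i}$ depends on $S$ only through which vertices of $V\setminus V_i$ have \emph{all} their $S$-neighbours absent, one bounds $\max_{|S|=s}X_{S,i}$ by a single $\Bin((1-1/k)n, (1-p_2)^{s'})$ variable with $s'=\lceil \tfrac nk\cdot 0.501\rceil$ the minimal relevant size; the worst case over $s$ is then the smallest $s$, handled once. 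After this reduction the union bound is over $O(n)\cdot O(1)$ events each failing with probability $\exp(-\Omega(n))$, which is $\exp(-\Omega(n))$ overall, as claimed. I would structure the write-up as: (i) identify the conditional distribution of $X_{S,i}$; (ii) monotonicity reduction to the minimal size $s'$ and a single binomial; (iii) the expectation computation showing $\Erw=o(n/k)$ while the threshold is $\Theta(n/k)$; (iv) Chernoff plus the $O(n)$ union bound.
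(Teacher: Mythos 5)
There is a genuine gap, and it starts with a misreading of the statement. In \eqref{eqPM1} the quantity $\alpha$ is $\frac kn|S|$, i.e.\ it ranges over the whole interval $[0.501,1-k^{-0.499}]$ together with $|S|$; it is \emph{not} the constant $\alpha=1/k+\tilde O_k(1/k^2)$ from Lemma~\ref{lem:FormMaxMatrix}. Consequently the threshold $\frac nk(1-\alpha-\kappa)-n^{2/3}$ is size-dependent and shrinks to order $nk^{-1.499}$ as $\alpha\to1-k^{-0.499}$, while the mean of $X_{S,i}$ is of order $nk^{-2\alpha}\approx nk^{-2}$ there. Your analysis, which fixes the threshold at $\Theta(n/k)$ and declares $\theta=0.501$ the worst case, therefore does not cover the actual extremal regime: near $\alpha=1$ the entropy of choosing $S$ (which is $\exp(\frac nk h(\alpha))\approx\exp(\Theta(k^{-1.499}n\ln k))$) and the Chernoff exponent (of order $k^{-1.499}n\ln k$ as well) are genuinely comparable, which is exactly why the paper's proof substitutes $y=1-\alpha$, computes three derivatives, and checks negativity at both endpoints using convexity. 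This delicate balance cannot be waved away by "the deviation is a constant factor of $n$."

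The second problem is your union bound and its "fix." You count the sets $S\subseteq V_i$ as $2^{|V_i|}\le 2^n$, but $|V_i|\sim n/k$ since $\hat\SIGMA\in\cB$, so the correct count for sets of size $\alpha n/k$ is $\binom{|V_i|}{\alpha n/k}\le\exp(\frac nk h(\alpha))$ — the naive union bound you reject is in fact the one the paper uses, and it works precisely because the entropy carries the factor $1/k$. Your proposed replacement is not valid: while $X_{S',i}\le X_{S,i}$ for $S'\supseteq S$ is true, $\max_{|S|=s}X_{S,i}$ over exponentially many (correlated) sets is \emph{not} stochastically dominated by a single $\Bin((1-1/k)n,(1-p_2)^{s'})$ variable, and the monotonicity does not even cleanly reduce to the minimal size because the target bound $\frac nk(1-\alpha-\kappa)-n^{2/3}$ decreases with $\alpha$ as well. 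The correct route is the paper's: fix the size class, bound its cardinality by $\exp(\frac nk h(\alpha))$, apply Lemma~\ref{lem:chernoff} with the size-dependent threshold, sum the two exponents, and verify that the resulting function of $y=1-\alpha$ is negative on all of $[k^{-0.499},0.499]$ via the convexity argument.
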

\begin{proof}
It suffices to prove the statement for $i=1$ and we set $X_S=X_{S,1}$.
Moreover, let $\alpha\in[0.501,1-k^{-0.499}]$.
For a fixed $S\subset V_1$ and $v\in V\setminus V_1$ the number $|\partial_{\hat\G} v\cap S|$ is a binomial random variable
	with parameters $|S|=\frac{\alpha n}k$ and $p_2$.
Hence, $\pr[\partial v\cap S=\emptyset]=(1-p_2)^{|S|}$.
Consequently, $X_S$ itself is a binomial variable with mean $|V\setminus V_1|(1-p_2)^{|S|}$. Because $\sigma$ is balanced, we have
	 $|V\setminus V_1|\sim n(1-1/k)$.
Further, our assumptions on $d,\beta$ entail
	\begin{align*}
	(1-p_2)^{|S|}\le\exp(-p_2|S|)\le\exp\left(-\alpha\frac{2k\ln k}{k-c_\beta}+\alpha \frac{3\ln k}{k-c_\beta}\right)\le(1+o_k(1))k^{-2\alpha}.
	\end{align*}
Therefore, $\Erw[X_S]\le n(1+o_k(1))(1-1/k)k^{-2\alpha}$.
Thus, Lemma \ref{lem:chernoff} yields
	\[\pr[X_S>(1-\alpha-\kappa)\frac{n}{k}-n^{2/3}]\le
		\exp\left[-(1-\alpha-\kappa+o(1))\frac{n}{k}\ln\left(\frac{1-\alpha-\kappa}{ek^{1-2\alpha}}\right)\right].\]
The total number of sets $S$ of size $\alpha n/k$ is
	$\binom{|V_1|}{\alpha \frac{n}{k}}
		\leq\exp\left[
	\frac{n}{k}h(\alpha)\right].$
Hence, by the union bound
	\begin{align}\nonumber
	\pr\left[\exists S:X_S\ge(1-\alpha-\kappa)\frac{n}{k}-n^{2/3}\right]
	&\le  
	\exp\left[\frac nk\bc{2h(\alpha)+(1-\alpha)+(1-2\alpha)(1-\alpha-\kappa)\ln k
		+o(1)}\right]\\
	&\hspace{-2cm}\leq\exp\left[\frac nk\bc{(1-\alpha)(3-2\ln(1-\alpha))+(2(1-\alpha)^2-(1-2\kappa)(1-\alpha)+\kappa)\ln k
		+o(1)}\right].
	\label{eq:unionS}
	\end{align}
Substituting $y=1-\alpha$ and differentiating, we obtain
	\begin{align*}
	\frac{\partial}{\partial y}y(3-2\ln y)+(2y^2-(1-2\kappa)y+\kappa)\ln k&=1-2\ln y+4y\ln k-(1-2\kappa)\ln k,\\
	\frac{\partial^2}{\partial y^2}y(3-2\ln y)+(2y^2-(1-2\kappa)y+\kappa)\ln k&=
		-\frac2y+4\ln k,\qquad
	\frac{\partial^3}{\partial y^3}y(3-2\ln y)+(2y^2-(1-2\kappa)y+\kappa)\ln k=2.
		\end{align*}
Hence, the first derivative is negative at the left boundary point $y=k^{-0.499}$, positive at the right boundary point $y=0.499$
and convex on the entire interval.
Furthermore, we check that
$y(3-2\ln y)+(2y^2-(1-2\kappa)y+\kappa)\ln k<0$ for $y\in\{0.499,k^{-0.499}\}$.
Therefore, the assertion follows from (\ref{eq:unionS}).
\end{proof}

\begin{lemma}	\label{lem:L2}
Given $\hat\SIGMA\in\cB$ the random graph $\hat\G$ has the following property with probability $1-\exp(-\Omega(n))$.
	\begin{equation}\label{eqPM2}
	\parbox{14cm}{Let $i\in[k]$ and let $Y=Y(\hat\G,\hat\SIGMA)$ be the number of vertices $v\not\in V_i$ with fewer than 15 neighbors in $V_i$.
		Then $Y\leq\frac{\kappa n}{3k\ln k}$.}
	\end{equation}
\end{lemma}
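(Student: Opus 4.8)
The plan is to fix a colour $i\in[k]$, write $Y=Y_i$ as a sum of independent Bernoulli variables, bound its conditional expectation, apply the Chernoff bound of \Lem~\ref{lem:chernoff}, and finish with a union bound over the $k=O(1)$ choices of $i$; by the symmetry of the construction in PM1--PM2 the distribution of $Y_i$ given $\hat\SIGMA\in\cB$ does not depend on $i$, so it suffices to treat one value of $i$.

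First I would identify the independence structure. Conditionally on $\hat\SIGMA$ (which we take to be balanced), for every vertex $v\notin V_i$ the degree $|\partial_{\hat\G}v\cap V_i|$ is a binomial random variable with parameters $|V_i|$ and $p_2$, since each potential edge $\{v,w\}$ with $w\in V_i$ joins vertices of distinct colours under $\hat\SIGMA$ and is therefore present with probability $p_2$. Crucially, the edge sets $\{\{v,w\}:w\in V_i\}$ are pairwise disjoint over the vertices $v\notin V_i$, so the events $\cE_v=\{|\partial_{\hat\G}v\cap V_i|<15\}$ are mutually independent, and hence $Y=\sum_{v\notin V_i}\vecone_{\cE_v}$ is a sum of independent indicators.

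Next I would bound $\pr[\cE_v]$ and thus $\Erw[Y]$. Since $\hat\SIGMA\in\cB$ we have $|V_i|=(1+o(1))n/k$, so $\mu:=|V_i|p_2=(1+o(1))d/(k-c_\beta)$; using $d\ge2(k-1)\ln(k-1)$ and $c_\beta\in(0,1)$ this gives $\mu\ge2\ln k-O(\ln k/k)$, whence $\eul^{-\mu}\le(1+o(1))k^{-2}$. A routine lower-tail estimate for the binomial distribution --- bounding $\binom{|V_i|}{j}p_2^{\,j}(1-p_2)^{|V_i|-j}\le(1+o(1))\eul^{-\mu}\mu^{j}/j!$ and summing over $j\le14$, the term $j=14$ dominating because $\mu\gg14$ --- yields $\pr[\cE_v]\le(1+o(1))C\,\eul^{-\mu}\mu^{14}\le(1+o(1))C'k^{-2}(\ln k)^{14}$ for absolute constants $C,C'$. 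Therefore $\Erw[Y]\le|V\setminus V_i|\cdot\pr[\cE_v]\le(1+o(1))C'n\,k^{-2}(\ln k)^{14}$, which is smaller than the target threshold $\kappa n/(3k\ln k)=n(\ln k)^{19}/(3k^2)$ by a factor $t\ge(\ln k)^{5}/(4C')$, hence $t\to\infty$ with $k$ and in particular $t>\eul$ once $k\ge k_0$.

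Finally, applying \Lem~\ref{lem:chernoff} to $Y$ with mean $\mu_0=\Erw[Y]>0$ and the value $t=\kappa n/(3k\ln k\,\mu_0)>1$ gives $\pr[Y>\kappa n/(3k\ln k)]\le\exp(-t\mu_0\ln(t/\eul))=\exp(-\Omega(n))$, since $t\mu_0=\kappa n/(3k\ln k)$ is of order $n$ and $\ln(t/\eul)$ is a positive constant depending only on $k$; a union bound over $i\in[k]$ then establishes \eqref{eqPM2} with probability $1-\exp(-\Omega(n))$. The only point that needs care is the bookkeeping in the previous paragraph: one must verify that $\eul^{-\mu}$ really is of order $k^{-2}$ --- and not merely $k^{-2+o_k(1)}$ with a divergent correction --- so that the polylogarithmic factor $\mu^{14}$ arising from the constant $15$ in the definition of $Y$ is genuinely dominated by the $(\ln k)^{20}$ packed into $\kappa$; this comparison of exponents ($14<19$) is exactly what makes $\Erw[Y]$ negligible against the threshold and hence makes the Chernoff bound bite.
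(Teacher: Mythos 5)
Your proposal is correct and follows essentially the same route as the paper: compute that $|\partial_{\hat\G}v\cap V_i|$ is binomial with mean $\lambda\sim d/(k-c_\beta)\ge 2\ln k-O_k(\ln k/k)$, bound the probability of fewer than $15$ neighbours by $O(k^{-2}\ln^{14}k)$, observe that $Y$ is (dominated by) a binomial variable with mean $O(nk^{-2}\ln^{14}k)$, and conclude via Lemma~\ref{lem:chernoff} and the choice $\kappa=\ln^{20}k/k$, which leaves the factor $\ln^{5}k$ of slack you identify. The extra details you supply (independence via disjoint edge sets, the explicit Poisson-type tail estimate, the union bound over $i$) are exactly what the paper leaves implicit.
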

\begin{proof}
Suppose $i=1$.
Given $\hat\SIGMA\in\cB$ for $v\notin V_1$ the number $|\partial_{\hat\G}v\cap V_1|$ of neighbors in $V_1$ is a binomial variable with mean
	$\lambda=|V_1|p_2\sim {d}/\bc{k-c_\beta}>2\ln k+O_k(\ln k/k).$
Hence, the probability of a vertex having at most 14 neighbors in $V_1$ is upper bounded by $2\lambda^{14}\exp(-\lambda)\le 3k^{-2}\ln^{14}k$.
Therefore, $Y$ is dominated by a binomial variable with mean $\mu\le 3nk^{-2}\ln^{14}k$.
Finally, the assertion follows from Lemma \ref{lem:chernoff} and the choice of $\kappa$.
\end{proof}

\begin{claim}\label{claim:smalledgespan}
Given $\hat\SIGMA\in\cB$ the random graph $\hat\G$ has the following property with probability $1-O(n^{-1})$.
	\begin{equation}\label{eqPM3}
	\parbox{14cm}{If $W\subset V$ has size $W\le k^{-4/3}n$, then  $W$ spans no more than $5|W|$ edges.}
	\end{equation}
\begin{proof}
Given $\hat\SIGMA$ for any edge of the complete graph the probability of being present in $\hat\G$ is bounded by $p_2$.
Therefore, by the union bound and with room to spare, for any $0<\gamma\le k^{-4/3}$ we find
		\begin{align*}
		&\pr\left[\exists W\subset V,\,|W|=\gamma n:W\mbox{ spans $5|W|$ edges}\right|\hat\SIGMA\in\cB]
			\le \bink{n}{\gamma n}\binom{\binom{\gamma n}{2}}{5\gamma n}p_2^{5\gamma n}
		\le 
			\brk{\frac{e}{\gamma}\bcfr{e\gamma d}{5}^5}^{\gamma n}\leq\bc{\gamma^4d^5}^{\gamma n}.
		\end{align*}
Summing over $1/n\leq\gamma\leq k^{-4/3}$ completes the proof.
	\end{proof}
\end{claim}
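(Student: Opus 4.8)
The plan is to establish Claim~\ref{claim:smalledgespan} by a routine first-moment union bound, in the spirit of the argument for proper $k$-colorings in \cite{Danny}. The one structural input is that, conditionally on \emph{any} fixed color assignment $\hat\SIGMA$ (balanced or not), each edge of the complete graph on $[n]$ is present in $\hat\G$ independently of all the others with probability at most $p_2$, because $p_1=\exp(-\beta)p_2\le p_2$. Hence for a fixed $W\subseteq V$ with $|W|=s$ the number of edges of $\hat\G$ inside $W$ is stochastically dominated by $\Bin\bc{\bink{s}{2},p_2}$, and therefore
\[
\pr\brk{\,W\text{ spans at least }5s\text{ edges}\mid\hat\SIGMA\,}\ \le\ \bink{\bink{s}{2}}{5s}p_2^{5s}.
\]

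First I would union-bound this over the $\bink{n}{s}$ sets $W$ of a prescribed size $s=\gamma n$ with $1\le s\le k^{-4/3}n$, and simplify with the standard estimates $\bink{n}{s}\le(en/s)^s$, $\bink{\bink{s}{2}}{5s}\le(es/10)^{5s}$ and $p_2\le 2d/n$ (the latter valid for $n$ large). This collapses to
\[
\pr\brk{\,\exists W,\ |W|=\gamma n:\ W\text{ spans }5|W|\text{ edges}\mid\hat\SIGMA\in\cB\,}\ \le\ \brk{\frac{e}{\gamma}\bcfr{e\gamma d}{5}^{5}}^{\gamma n}\ \le\ \bc{C\gamma^4d^5}^{\gamma n},
\]
with the absolute constant $C=e^6/5^5<1$; conditioning on $\hat\SIGMA\in\cB$ costs nothing here since the per-$W$ bound holds for every $\hat\SIGMA$.

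It then remains to sum $\bc{C\gamma^4d^5}^{\gamma n}$ over the integers $s=\gamma n$ in $[1,k^{-4/3}n]$, and this is the one spot that needs a bit of care: one has to confirm that the sum really is $O(n^{-1})$ rather than merely $o(1)$, which forces a split into two regimes. For ``small'' $s$, say $s\le\sqrt n$, one has $s^4\le n^2$, so the $s$-th summand is at most $\bc{Cd^5/n^2}^s$, and (for $n$ large, with $d,k$ fixed) these form a geometric series summing to $O(n^{-2})$. For ``large'' $s$, i.e.\ $\sqrt n<s\le k^{-4/3}n$, the density cut-off $\gamma\le k^{-4/3}$ built into the claim does the work: combined with $d\le(2k-1)\ln k$ it gives $C\gamma^4d^5\le Ck^{-16/3}\bc{2k\ln k}^5=\tilde O_k(k^{-1/3})<1$ for $k\ge k_0$ --- here $\gamma^4\le k^{-16/3}$ is exactly what beats $d^5=\tilde O_k(k^5)$ --- so each such summand is at most $q^{\sqrt n}$ for a fixed $q=q(k)<1$, and the at most $n$ of them together contribute $\exp(-\Omega(\sqrt n))$. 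Altogether the bad event has probability $O(n^{-2})=O(n^{-1})$, as asserted. (One may observe in addition that the event is vacuous unless $\bink{s}{2}\ge 5s$, i.e.\ $s\ge 11$, which improves the bound to $O(n^{-44})$; but $O(n^{-1})$ is all that is needed downstream.) Beyond this bookkeeping there is no genuine obstacle: the whole computation is elementary, its only structural ingredient being the pointwise bound that every potential edge appears with probability at most $p_2$ regardless of $\hat\SIGMA$.
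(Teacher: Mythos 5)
Your proposal is correct and follows essentially the same route as the paper: the identical union bound over sets of each fixed size, the same stochastic domination by $\Bin\bc{\bink{s}2,p_2}$, and the same simplification to $\bc{C\gamma^4d^5}^{\gamma n}$. The only difference is that you carefully justify the final summation (the paper simply says ``summing over $1/n\le\gamma\le k^{-4/3}$ completes the proof''), and your two-regime split correctly yields the claimed $O(n^{-1})$ bound.
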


\begin{proof}[Proof of Proposition \ref{prop:sepbal}]
Suppose that $\hat\SIGMA$ is balanced.
By our assumptions on $d,\beta$ for each $i$ the number of edges spanned by $\hat\SIGMA^{-1}(i)$ in $\hat\G$ is a binomial random variable with mean
	$$(1+o(1))\bink{n/k}2p_1\leq (1+o(1))\frac{dn}{2k(k-c_\beta)}\exp(-\beta)\leq(1+o_k(1))nk^{-1}\exp(-\beta)\ln k.$$
Hence, \Lem~\ref{lem:chernoff} shows that $(\hat\G,\hat\SIGMA)$ satisfies {\bf SEP1} with probability $1-\exp(-\Omega(n))$.

With respect to {\bf SEP2}, we continue to condition on $\hat\SIGMA\in\cB$.
By \Lem~\ref{lem:L1}, \Lem~\ref{lem:L2} and Claim~\ref{claim:smalledgespan}
	we may assume that $\hat\G$ has the properties (\ref{eqPM1}), (\ref{eqPM2}) and (\ref{eqPM3}).
In order to show separability we may without loss of generality restrict ourselves to the case of $i=j=1$.
Thus, suppose that $\tau\in\Sigma_{\G,\beta}$ satisfies $\rho_{11}(\hat\SIGMA,\tau)\ge0.51\frac{n}{k}$
and assume for contradiction that $\alpha=\frac kn|S|=\rho_{11}(\hat\SIGMA,\tau)<1-\kappa$.
Let $$S=\hat\SIGMA^{-1}(1)\cap \tau^{-1}(1),\quad R=\hat\SIGMA^{-1}(1)\setminus\tau^{-1}(1),\quad T=\tau^{-1}(1)\setminus\hat\SIGMA^{-1}(1).$$
Because $\sigma$ and $\tau$ are balanced, we have
	\begin{align}\label{eqContra1}
	|T\cup S|\sim\frac{n}{k}\sim|R\cup S|.
	\end{align}
Let $T_0=\{v\in T:\partial_{\hat\G}v\cap S=\emptyset\}$ and let $T_1=T\setminus T_0$.
Then \textbf{SEP1} and our assumptions on $d$ and $\beta$ ensure that
	$|T_1|\le \frac{4n\ln k}{k\exp(\beta)}.$
Consequently, the assumption $\beta\geq\ln k$ yields
	\begin{align*}
	|T_0|\ge \frac{n}{k}(1-\alpha- O_k(\ln k/k)).
	\end{align*}
Since the vertices in $T_0$ do not have neighbors in $S$, (\ref{eqPM1}) implies that 
	\begin{align}\label{eqContra3}
	\alpha>1-k^{-0.49}.
	\end{align}
Further, let
	$U=\{v\in T\,:\,|\partial v \cap \hat\SIGMA^{-1}(1)|\ge 15\}.$
Then (\ref{eqPM2}) implies that $|T|\leq|U|+\kappa n/(k\ln k)$.
Therefore, (\ref{eqContra1}) and our assumption $\alpha<1-\kappa$ yield
	\begin{align}\label{eqContra2}
	|U|\geq(1-o_k(1))\frac{\kappa n}{k}\qquad\mbox{and}\qquad
	|R|-o_k(\kappa)\frac nk\leq|U|\leq|R|+o(n).
	\end{align}
Hence, {\bf SEP1} implies that $S\cup U$ spans no more than $2nk^{-1}\exp(-\beta)\ln k\leq|U|$ edges.
Consequently, $U\cup R$ spans at least $14|U|$ edges.
Thus, combining (\ref{eqPM3}) and (\ref{eqContra2}), we conclude that $|U\cup R|> nk^{-4/3}$.
But then (\ref{eqContra1}) and (\ref{eqContra2}) show that $1-\alpha+o(1)\geq\frac kn|R|>\frac k{3n}|U\cup R|\geq\frac13k^{-1/3}$,
in contradiction to (\ref{eqContra3}).
\end{proof}

\begin{proof}[Proof of Proposition \ref{prop:expsepbal}]
	By linearity of expectation, applying Lemma \ref{lem:contig} to Proposition \ref{prop:sepbal} yields $\Erw[\Zkb{\G}]\sim \Erw[\Zkg{\G}]$. 
\end{proof}

\section{High degree, low temperature: the second moment}\label{sec:doublystoch}

\noindent
To prove \Prop~\ref{prop:smm} we call a doubly-stochastic $k\times k$-matrix $\rho$ {\em separable} if $\rho_{ij}\not\in(0.51,1-\kappa)$ for all $i,j\in[k]$.
Moreover, $\rho$ is \emph{$s$-stable} if $s=|\{(i,j)\in[k]^2:\rho_{ij}>0.51\}|$.
Let $\Dg\subset\cD$ be the set of all separable matrices and let $\Dsg\subset\Dg$ be the set of all $s$-stable matrices so that $\Dg=\bigcup_{s=0}^k\Dsg$.
The key step is to optimize the function $f_{d,\beta}$ over $\Dg$.

\begin{proposition}\label{prop:maxsep}
If $2(k-1)\ln(k-1)\leq d\leq d_\star$ and $\beta\geq\ln k$, then
$f_{d,\beta}(\rho)<f_{d,\beta}(\bar{\rho})$
for all $\rho\in\Dg\setminus\{\bar{\rho}\}$.
\end{proposition}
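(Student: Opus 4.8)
The plan is to carry the analysis of \cite[\Sec~4]{Danny} for proper $k$-colourings over to the Potts energy, handling $\beta$ as in \Sec~\ref{sec:singlystoch} and --- decisively --- using that here one optimises over doubly- rather than singly-stochastic matrices. Since $\cD\subseteq\cS$, \Lem~\ref{lem:monotony} applies to every $\rho\in\cD$: the map $\beta\mapsto f_{d,\beta}(\bar\rho)-f_{d,\beta}(\rho)$ is non-increasing and, by the proof of that lemma, has strictly negative derivative when $\rho\neq\bar\rho$ (as then $\|\rho\|_2^2>1$). Hence it suffices to prove the \emph{non-strict} inequality $f_{d,\infty}(\bar\rho)\ge f_{d,\infty}(\rho)$ for every $\rho\in\Dg$, i.e.\ with $c_\beta=1$; there $E(\rho)=\tfrac d2\ln[(1-1/k)^2+(\|\rho\|_2^2-1)/k^2]$ and $f_{d,\infty}(\bar\rho)=2\ln k+d\ln(1-1/k)$. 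Since $d\le d_\star=2k\ln k-\ln k-2-k^{-1/2}=2k\ln k-O_k(\ln k)$, the structural lemmas \ref{lem:smax}, \ref{lem:NoHalfEntries} and \ref{lem:FormMaxMatrix} of \Sec~\ref{sec:singlystoch} stay valid at $c_\beta=1$ and are now applied to the rows of doubly-stochastic matrices.

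As $\Dg$ is compact, $f_{d,\infty}$ attains its maximum over $\Dg$ at some $\rho$; let $s$ be its number of stable entries. By the argument of \Lem~\ref{lem:NoHalfEntries} we may take $\rho$ to have no entry in $[0.49,0.51]$, so every entry lies in $[0,0.49)\cup[1-\kappa,1]$, the latter being exactly the $s$ stable entries. If $s=0$, iterating \Lem~\ref{lem:FormMaxMatrix}(1) over the rows gives $f(\rho)\le f(\bar\rho)$, strict unless $\rho=\bar\rho$. For $s\ge1$: a row and a column each sum to $1$, so no two entries of a row or of a column exceed $0.51$; hence the $s$ stable entries lie in pairwise distinct rows and columns --- distinct \emph{columns} because the column sums are $1$, which is where double-stochasticity first bites --- and, relabelling, $\rho_{ii}\ge1-\kappa$ for $i\le s$ with every other entry in $[0,0.49)$. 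Writing $B=(\rho_{ij})_{s<i,j\le k}$, the first-order optimality conditions on $\cD$ --- read off the $2\times2$ exchanges $\rho_{ij}\!\mapsto\!\rho_{ij}+\xi$, $\rho_{il}\!\mapsto\!\rho_{il}-\xi$, $\rho_{hj}\!\mapsto\!\rho_{hj}-\xi$, $\rho_{hl}\!\mapsto\!\rho_{hl}+\xi$, whose first-order effect on $f$ is $\tfrac1k\ln\tfrac{\rho_{il}\rho_{hj}}{\rho_{ij}\rho_{hl}}$ plus a bounded multiple of $\rho_{ij}+\rho_{hl}-\rho_{il}-\rho_{hj}$ (the doubly-stochastic analogue of Claim~\ref{claim:bestrho}) --- force, for $d$ in our range, the shape $\rho_s^\ast$: each stable row is near-permutation, $\rho_{ii}=1-\alpha_i$, $\rho_{ij}=\alpha_i/(k-1)$ with $\alpha_i=1/k+\tilde O_k(k^{-2})$ (cf.\ \Lem~\ref{lem:FormMaxMatrix}(2)), the two cross blocks carry mass $O_k(s/k)$, and $B$ is uniform up to $o_k(1)$. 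For $k-s\le2$ no near-uniform block is admissible --- a $1\times1$ or $2\times2$ doubly-stochastic block with no entry in $[0.49,0.51]$ has an entry $\ge1-\kappa$, i.e.\ a further stable entry --- so those cases collapse to $s=k$, where $\rho_k^\ast$ is the near-permutation matrix $\rho_{ii}=1-\alpha$, $\rho_{ij}=\alpha/(k-1)$.

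Evaluating $f_{d,\infty}$ at $\rho_s^\ast$ one finds, for $1\le s\le k-3$,
\[
2\ln k-H(k^{-1}\rho_s^\ast)=\ln k-\frac{k-s}{k}\ln(k-s)+\tilde O_k\!\Big(\frac{s\ln k}{k^2}\Big),\qquad \|\rho_s^\ast\|_2^2-1=s+O_k(1),
\]
(with $\|\rho_k^\ast\|_2^2-1=k-3+O_k(1/k)$), whence $E(\rho_s^\ast)-E(\bar\rho)=\tfrac d2\ln[1+(s+O_k(1))/(k-1)^2]$. The factor $\ln(k-s)$ --- rather than $\ln k$ --- records that each non-stable row is effectively spread over only $k-s$ columns, and this is precisely where double- beats single-stochasticity: the matching \emph{singly}-stochastic candidate (non-stable rows uniform over all $k$ columns) satisfies $f>f(\bar\rho)$ for every $s\ge1$, so relaxing to $\cS$ --- which works for $\beta\le\ln k$, cf.\ \Prop~\ref{prop:maxlnkh} --- is useless here. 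One is thereby reduced, exactly as in \Lem~\ref{lem:barmax}, to the inequality
\[
\ln k-\frac{k-s}{k}\ln(k-s)\;>\;\frac d2\ln\!\Big[1+\frac{s}{(k-1)^2}\Big]+\tilde O_k\!\Big(\frac{s\ln k}{k^2}\Big),\qquad 1\le s\le k,
\]
a polynomial inequality in $s/k$ and $1/k$ once the logarithms are expanded. For $s$ bounded away from $k$ the left side beats the right by $\Omega_k(\ln k/k)$ (and by $\Omega_k(1)$ when also bounded away from $0$), so the only delicate range is $s\in\{k-2,k-1,k\}$ --- perturbed permutation matrices --- which all reduce to comparing $f(\bar\rho)$ with $f(\rho_k^\ast)$.

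I expect that comparison to be the main obstacle. Here $f_{d,\infty}(\bar\rho)$ and $f_{d,\infty}(\rho_k^\ast)$ agree to leading order, so the sign of their difference is decided only at order $k^{-2}$; a crude bound like $\ln(1+x)\le x$ already fails at $\rho_k^\ast$, and it is exactly this second-order balance that forces the precise value $d_\star=(2k-1)\ln k-2-k^{-1/2}$ in place of a cruder bound such as $(2k-1)\ln k-2\ln2$. Pushing the Mercator expansion to the needed order --- with the optimal $\alpha=1/k+\tilde O_k(k^{-2})$ in place and a check that the slack $\kappa=\ln^{20}k/k$ contributes only lower-order error --- completes this step. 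Strictness for $\rho\neq\bar\rho$ is then automatic: the reductions through Lemmas~\ref{lem:smax}, \ref{lem:NoHalfEntries} and \ref{lem:FormMaxMatrix} are strict unless $\rho$ already has the stated form, the displayed inequality is strict for $s\ge1$, and strictness at every finite $\beta\ge\ln k$ then follows, as noted at the outset, for free from \Lem~\ref{lem:monotony}.
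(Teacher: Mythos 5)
Your architecture is sound and broadly parallels the paper's: reduce to $\beta=\infty$ via \Lem~\ref{lem:monotony} (your observation that only the non-strict inequality is needed there is correct, since $\|\rho\|_2^2>1$ for every doubly-stochastic $\rho\neq\bar\rho$), use separability to exclude entries in $(0.51,1-\kappa)$, and recognize that the binding case is the near-permutation matrix. But the decisive step is missing: the comparison of $f_{d,\infty}(\bar\rho)$ with $f_{d,\infty}(\rhos)$, $\rhos=(1-1/k)\id+k^{-2}\vecone$, is exactly what forces the value of $d_\star$, and you announce it ("I expect that comparison to be the main obstacle\dots completes this step") without carrying it out. The paper does it in \Lem~\ref{lem:MaxCases4}: writing $d=(2k-1)\ln k-c$, one gets $f_{d,\infty}(\bar\rho)=c/k+O_k(\ln k/k^2)$ and $f_{d,\infty}(\rhos)=1/k+c/(2k)+O_k(\ln k/k^2)$, so the requirement is $c\ge 2+\omega_k(\ln k/k)$, which is precisely why $d_\star=(2k-1)\ln k-2-k^{-1/2}$. (Incidentally, the sign is decided at order $1/k$ --- the coefficients are $c$ versus $1+c/2$ --- with margin $\Theta(k^{-3/2})$ at $d=d_\star$, not at order $k^{-2}$ as you state.)

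Your self-contained treatment of $1\le s\le k-1$ also has an unjustified step. The claim that $s\in\{k-1,k-2\}$ collapses to $s=k$ because a $1\times1$ or $2\times2$ doubly-stochastic block without entries in $[0.49,0.51]$ must contain a further stable entry fails: the residual block $B$ is not doubly stochastic, since the cross blocks can carry total mass up to $s\kappa\approx\ln^{20}k$, which is not small. Indeed separable $(k-1)$-stable matrices exist, e.g.\ $\rho_{ii}=1-\kappa$ for $i\le k-1$ with row $k$ spreading its unit mass in packets of size $\kappa$ over $1/\kappa$ columns and $\rho_{kk}=0$; so those cases do not disappear and your displayed inequality would have to be verified for them too. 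The paper avoids re-deriving any of this: for every $s<k$ it invokes \Prop~\ref{Prop_Danny} (i.e.\ the corresponding propositions of \cite{Danny}) at $d=(2k-1)\ln k-2\ln2\ge d_\star$ and $\beta=\infty$, and transfers the conclusion to $d\le d_\star$ and finite $\beta\ge\ln k$ by monotonicity in $d$ (\Lem~\ref{lem:monotonyd}) and in $\beta$ (\Lem~\ref{lem:monotony}); only the $k$-stable case, which \cite{Danny} does not cover, requires new work. If you wish to keep your route, you must both supply the expansion for $\rhos$ and close the $s=k-1,k-2$ (and, in full rigor, all $1\le s\le k-3$) cases that you currently dispatch by assertion.
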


A similar statement for the function
	\begin{align}
	f_{d,\infty}(\rho)=H(k^{-1}\rho)+\frac{d}{2}\ln\left[1-\frac{2}{k}+\frac{\|\rho\|_2^2}{k^2}\right],\label{eq:frozenF}
	\end{align} 
the limit of $f_{d,\beta}(\rho)$ as $\beta\to\infty$,
played a key role in~\cite{Danny}.
Specifically, we have

\begin{proposition}[{\cite[Propositions 4.4--4.6, 4.8]{Danny}}]\label{Prop_Danny}
	Assume that $d=(2k-1)\ln k - 2\ln 2$.
Then $f_{d,\infty}(\rho)<f_{d,\infty}(\bar{\rho})$ for all $0\le s < k$, $\rho\in \Dng{s}\setminus\{\bar{\rho}\}$.
\end{proposition}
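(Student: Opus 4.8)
The plan is to first localise the maximiser of $f_{d,\infty}$ over $\bigcup_{0\le s<k}\Dng{s}$ to an explicitly parametrised family, and then to verify a sharp inequality on that family by an expansion in powers of $1/k$.

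For the localisation I would use the $\beta=\infty$ specialisations of Lemmas~\ref{lem:smax}, \ref{lem:NoHalfEntries} and~\ref{lem:FormMaxMatrix}. Since $f_{d,\beta}$ depends on $\beta$ only through $c_\beta=1-\exp(-\beta)$ and is continuous in $c_\beta\in[0,1]$, these statements hold verbatim at $c_\beta=1$ (either pass to the limit $\beta\to\infty$ or re-run the proofs with $c_\beta=1$; the only delicate point is the half-entry Lemma~\ref{lem:NoHalfEntries}, whose proof already obtains the improvement for $\beta\ge\ln k$, and one checks that the same estimate yields the conclusion at $c_\beta=1$). Now fix $d=(2k-1)\ln k-2\ln 2=2k\ln k-\ln k-2\ln 2$, take $0\le s<k$ and $\rho\in\Dng{s}\setminus\{\bar\rho\}$. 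Combining Lemma~\ref{lem:NoHalfEntries} (to remove any entry in $[0.49,0.51]$) with repeated use of Lemma~\ref{lem:FormMaxMatrix}(1),(2), one obtains a singly stochastic matrix $\rho'$ with $f_{d,\infty}(\rho)\le f_{d,\infty}(\rho')$---strictly, by Lemma~\ref{lem:smax}, unless $\rho=\rho'$---each of whose rows is either \emph{uniform} (all entries $1/k$) or \emph{frozen} (one entry $1-\alpha$ with $\alpha=\tfrac1k+\tilde{O}_k(1/k^2)$ and the remaining $k-1$ entries equal to $\tfrac{\alpha}{k-1}$), the rows of $\rho$ carrying an entry $>0.51$ becoming frozen and all others uniform. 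Crucially, because $\rho$ is $s$-stable with $s<k$, at least one row carries no entry $>0.51$, so $\rho'$ has $t\le k-1$ frozen rows. As $f_{d,\infty}$ depends on a matrix only through its entropy $H(k^{-1}\cdot)$ and its squared Frobenius norm $\|\cdot\|_2^2$, both invariant under independent permutations of rows and columns, I may take the frozen rows to be the first $t$ with their large entries on the diagonal; call the resulting matrix $\rho'_t$. It then suffices to prove $f_{d,\infty}(\rho'_t)<f_{d,\infty}(\bar\rho)$ for every $t=1,\dots,k-1$.

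For this I would compute both sides explicitly: with $\alpha$ as above, $H(k^{-1}\rho'_t)=\ln k+\tfrac{k-t}{k}\ln k+\tfrac{t}{k}\bc{h(\alpha)+\alpha\ln(k-1)}$ and $\|\rho'_t\|_2^2=t\bc{(1-\alpha)^2+\tfrac{\alpha^2}{k-1}}+\tfrac{k-t}{k}$, so that $f_{d,\infty}(\bar\rho)-f_{d,\infty}(\rho'_t)$ is an explicit function of $t$, $k$ and $\alpha$. A Mercator-series expansion of the logarithm in~\eqref{eq:frozenF}, as in the proof of Lemma~\ref{lem:barmax}, reduces the inequality---after inserting $d=2k\ln k-\ln k-2\ln 2$ and $\alpha=\tfrac1k+\tilde{O}_k(1/k^2)$---to a polynomial-type inequality in $x=t/k$: the leading $\Theta(\tfrac{t\ln k}{k})$ parts of the entropy loss and of the energy gain cancel, and the remainder is $\Theta(\tfrac{\ln k}{k})$ and roughly quadratic in $x$, so one checks it over the whole range $1\le t\le k-1$ (treating, if convenient, small $t$ separately from $t$ of order $k$). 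The inequality holds for $t\le k-1$ because $d=(2k-1)\ln k-2\ln 2$ lies just below the critical degree for these configurations; it \emph{fails} for the extremal configuration $t=k$, whose critical degree is lower (roughly $(2k-1)\ln k-2$). This is why $s=k$ is excluded from the proposition, and why Proposition~\ref{prop:maxsep}, which also covers $s=k$, must impose the stricter bound $d\le d_\star<(2k-1)\ln k-2$.

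I expect the main obstacle to be this last step: the expansion must be pushed to a high enough order in $1/k$---in particular the $\tilde{O}_k(1/k^2)$ correction to $\alpha$ cannot be dropped, since through $\|\rho'_t\|_2^2$ it contributes at exactly the order at which the inequality is decided---and the resulting estimate must be verified uniformly in $t$. The reduction in the first two paragraphs, by contrast, is essentially bookkeeping on top of the maximum-entropy lemmas already available; the genuinely new input beyond those lemmas is the sharp constant $2\ln2$.
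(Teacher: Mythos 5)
The paper itself does not prove this proposition---it is imported verbatim from \cite{Danny}---so the question is whether your reconstruction is sound, and it is not: the reduction in your first paragraph discards exactly the information that makes the statement true. The row operations of Lemmas~\ref{lem:smax}, \ref{lem:NoHalfEntries} and~\ref{lem:FormMaxMatrix} do not preserve double stochasticity, so your candidate maximizers $\rho'_t$ ($t$ frozen rows, $k-t$ uniform rows, $1\le t\le k-1$) are singly but not doubly stochastic, and for them the target inequality is \emph{false}. Carrying out the expansion you set up (with $\alpha=\frac1k+\tilde O_k(k^{-2})$ and $d=2k\ln k-\ln k-c$) gives
\begin{equation*}
f_{d,\infty}(\bar\rho)-f_{d,\infty}(\rho'_t)=\frac{t}{2k^2}\left[\ln k\left(\frac tk-1\right)+c-2\right]+O_k\!\left(\frac{t\ln k}{k^3}\right),
\end{equation*}
and at $c=2\ln 2$ both bracketed terms are negative for every $1\le t\le k-1$; for instance $f_{d,\infty}(\rho'_1)-f_{d,\infty}(\bar\rho)\sim\frac{\ln k}{2k^2}>0$. (The formula is consistent with \Lem~\ref{lem:MaxCases4}: at $t=k$ the first bracketed term vanishes and one is left with the condition $c>2$, which is why the $k$-stable case needs $d\le d_\star$.) Your ordering of the critical degrees is backwards: the configuration requiring the \emph{smallest} degree to be dominated by $\bar\rho$ is $t=1$, with critical degree $\approx 2(k-1)\ln(k-1)$---precisely where \Thm~\ref{Thm_CDGS} stops, and precisely why \Sec~\ref{subsec:highdeglowtemp} states that the trick of relaxing to singly-stochastic matrices does not work in this regime. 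Note also that \Lem~\ref{lem:barmax} is proved only at $\beta=\ln k$, where the extra factor $c_\beta^2=1-2/k+k^{-2}$ damps the energy by just enough (an amount of order $t\ln k/k^2$, exactly the order at which the comparison is decided) to tip the balance; at $\beta=\infty$ it does not.

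The missing ingredient is any use of double stochasticity beyond ``each row has at most one entry $>0.51$''. For $\rho\in\Dng{s}$ with $s<k$, the $s$ large entries pin down $s$ rows \emph{and} $s$ columns, and the complementary $(k-s)\times(k-s)$ block is forced, up to $O(s\kappa)$ errors in its marginals, to be doubly stochastic with all entries at most $0.51$; the entropy of such a block is maximized at the uniform block, and it is this constraint---invisible after your row-wise averaging---that restores the inequality for $s<k$. That is the substance of Propositions 4.4--4.6 and 4.8 of \cite{Danny}, and there is no shortcut to them through the singly-stochastic machinery of \Sec~\ref{sec:singlystoch}.
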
		

We prove \Prop~\ref{prop:maxsep} by combining \Prop~\ref{Prop_Danny} with monotonicity in both $d$ and $\beta$.
In fact, \Lem~\ref{lem:monotony} readily provided monotonicity in $\beta$.
Further, with respect to $d$ we have the following.

\begin{lemma}
	\label{lem:monotonyd}
	For every $d>0$, $\rho\in\cS$ we have
		\begin{align*}
		\frac{\partial}{\partial d}f_{d,\infty}(\bar\rho)&\leq\frac{\partial}{\partial d}f_{d,\infty}(\rho)<0.
		\end{align*}
Hence,  if $f_{d',\beta}(\bar{\rho})\ge f_{d',\beta}(\rho)$, then $f_{d,\beta}(\bar{\rho})\ge f_{d,\beta}(\rho)$ for all $0\leq d<d'$.
	\begin{proof}
Recalling that $1\leq \|\rho\|_2^2\leq k$, we find
		\begin{align*}
		\frac{\partial}{\partial d}f_{d,\infty}(\rho)=\frac{1}{2}\ln \left(1-\frac{2}{k}+\frac{\|\rho\|_2^2}{k^2}\right)<0.
		\end{align*}
The assertion follows because $\bar\rho$ minimizes the Frobenius norm on $\cS$.
	\end{proof}
\end{lemma}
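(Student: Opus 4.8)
The plan is to reduce the whole statement to controlling the single scalar $\|\rho\|_2^2$ over $\rho\in\cS$. Since the entropy term $H(k^{-1}\rho)$ of $f_{d,\infty}$ does not involve $d$ and the energy term is linear in $d$, differentiating is immediate:
\[
\frac{\partial}{\partial d}f_{d,\infty}(\rho)=\frac12\ln\Big(1-\frac2k+\frac{\|\rho\|_2^2}{k^2}\Big).
\]
So I first record the two elementary bounds on the Frobenius norm over singly-stochastic matrices: for a row $\rho_i$ with $\sum_j\rho_{ij}=1$ and entries in $[0,1]$, Cauchy--Schwarz gives $\sum_j\rho_{ij}^2\ge k^{-1}$ (equality iff the row is uniform), while $\sum_j\rho_{ij}^2\le\max_j\rho_{ij}\le1$; summing the $k$ rows yields $1\le\|\rho\|_2^2\le k$, the lower bound being attained only at $\bar\rho=k^{-1}\vecone$.

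Feeding these into the displayed derivative does everything at once. The upper bound $\|\rho\|_2^2\le k$ makes the argument of the logarithm at most $1-k^{-1}<1$, hence $\frac{\partial}{\partial d}f_{d,\infty}(\rho)<0$; the lower bound $\|\rho\|_2^2\ge1$ keeps that argument at least $(1-k^{-1})^2>0$, so the expression is well defined throughout. Moreover the right-hand side is strictly increasing in $\|\rho\|_2^2$, and $\bar\rho$ realizes the minimal value $\|\bar\rho\|_2^2=1$; this is exactly $\frac{\partial}{\partial d}f_{d,\infty}(\bar\rho)\le\frac{\partial}{\partial d}f_{d,\infty}(\rho)$.

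For the ``Hence'' clause I would note that the two derivative inequalities say precisely that $d\mapsto f_{d,\infty}(\bar\rho)-f_{d,\infty}(\rho)$ is non-increasing, so if it is $\ge0$ at $d'$ it remains $\ge0$ for all $d<d'$. The same holds for every finite $\beta$: since $\frac{\partial}{\partial d}f_{d,\beta}(\rho)=\frac12\ln[1-2c_\beta k^{-1}+\|\rho\|_2^2c_\beta^2k^{-2}]$, one gets
\[
\frac{\partial}{\partial d}\big(f_{d,\beta}(\bar\rho)-f_{d,\beta}(\rho)\big)=\frac12\ln\frac{(1-c_\beta k^{-1})^2}{1-2c_\beta k^{-1}+\|\rho\|_2^2c_\beta^2k^{-2}}\le0,
\]
because $\|\rho\|_2^2\ge1$ makes the denominator at least as large as the numerator $(1-c_\beta k^{-1})^2>0$; hence $d\mapsto f_{d,\beta}(\bar\rho)-f_{d,\beta}(\rho)$ is non-increasing, which is the claimed monotonicity.

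There is no real obstacle: this is a routine one-variable calculus lemma. The only things needing a moment's care are orienting the Frobenius extremes correctly (that $\bar\rho$ \emph{minimizes} $\|\cdot\|_2$ on $\cS$, which is what makes its $d$-derivative the smallest), and checking positivity of the logarithm's argument, for which the crude bound $c_\beta<1\le k$ already gives $1-2c_\beta k^{-1}+\|\rho\|_2^2c_\beta^2k^{-2}\ge(1-c_\beta k^{-1})^2>0$.
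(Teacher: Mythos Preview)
Your proof is correct and follows the same route as the paper: compute $\partial_d f_{d,\infty}(\rho)=\frac12\ln(1-2k^{-1}+\|\rho\|_2^2k^{-2})$, use $1\le\|\rho\|_2^2\le k$ on $\cS$ with the minimum attained at $\bar\rho$, and read off both inequalities. You simply add more detail than the paper (the Cauchy--Schwarz justification of the norm bounds and the explicit finite-$\beta$ computation for the ``Hence'' clause), which is fine.
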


\begin{corollary}
	   \label{cor:MaxCases}
		Let $\beta\ge 0$ and $d\le d_{\star}$. For all $1\le s\le k-1$ and $\rho\in\bigcup_{s<k}\Dng{s}\setminus\{\bar\rho\}$
			we have $f_{d,\beta}(\rho)<f_{d,\beta}(\bar{\rho})$.
\end{corollary}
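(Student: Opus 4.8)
The plan is to deduce \Cor~\ref{cor:MaxCases} from \Prop~\ref{Prop_Danny} together with the two monotonicity statements, \Lem~\ref{lem:monotony} (monotonicity in $\beta$) and \Lem~\ref{lem:monotonyd} (monotonicity in $d$), by sliding the strict inequality that \Prop~\ref{Prop_Danny} guarantees at the single reference point $d_0=(2k-1)\ln k-2\ln 2$, $\beta=\infty$ down to the full range $d\le d_\star$, $\beta\ge0$. The point of departure is the elementary comparison $2+k^{-1/2}>2\ln 2$, which gives $d_\star=(2k-1)\ln k-2-k^{-1/2}<d_0$; hence throughout the argument one only ever needs to move \emph{downwards} in $d$. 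Note also that every $\rho\in\bigcup_{s<k}\Dng{s}$ lies in $\cS$, so both monotonicity lemmas apply.

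First I would fix $\rho\in\bigcup_{s<k}\Dng{s}\setminus\{\bar\rho\}$ and consider, at $\beta=\infty$, the function $g(d)=f_{d,\infty}(\bar\rho)-f_{d,\infty}(\rho)$ of the degree. \Prop~\ref{Prop_Danny} gives $g(d_0)>0$, while \Lem~\ref{lem:monotonyd} gives $g'(d)=\frac{\partial}{\partial d}f_{d,\infty}(\bar\rho)-\frac{\partial}{\partial d}f_{d,\infty}(\rho)\le0$ for all $d>0$. So $g$ is non-increasing, and since $d\le d_\star<d_0$ one gets $g(d)\ge g(d_0)>0$; that is, $f_{d,\infty}(\rho)<f_{d,\infty}(\bar\rho)$ for every $d\le d_\star$ and every such $\rho$.

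Next I would run the identical argument in the temperature parameter. Fixing $d\le d_\star$ and keeping $\rho$ as above, set $h(\beta)=f_{d,\beta}(\bar\rho)-f_{d,\beta}(\rho)$. By \Lem~\ref{lem:monotony}, $h'(\beta)=\frac{\partial}{\partial\beta}f_{d,\beta}(\bar\rho)-\frac{\partial}{\partial\beta}f_{d,\beta}(\rho)\le0$, so $h$ is non-increasing on $[0,\infty)$. Since $c_\beta\to1$ as $\beta\to\infty$ we have $f_{d,\beta}(\bar\rho)\to f_{d,\infty}(\bar\rho)$ and $f_{d,\beta}(\rho)\to f_{d,\infty}(\rho)$, so $\lim_{\beta\to\infty}h(\beta)=f_{d,\infty}(\bar\rho)-f_{d,\infty}(\rho)>0$ by the previous step. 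A non-increasing function on $[0,\infty)$ whose limit at $\infty$ is strictly positive is itself strictly positive everywhere, so $h(\beta)>0$ for all $\beta\ge0$, which is exactly the assertion.

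I do not expect a genuine obstacle: the corollary is a bookkeeping consequence of material already assembled. The only two points that merit a moment's attention are (a) the numerical fact $d_\star<d_0$, which situates $d_\star$ below the reference degree of \Prop~\ref{Prop_Danny}, and (b) promoting the \emph{weak} inequalities delivered by \Lem~\ref{lem:monotony} and \Lem~\ref{lem:monotonyd} to the \emph{strict} inequality required here; this is handled by the ``non-increasing function with positive endpoint value'' device above, so that one never has to invoke strictness of the derivative gaps (which would anyway follow from $\bar\rho$ being the unique minimiser of $\|\cdot\|_2$ on the set of stochastic matrices).
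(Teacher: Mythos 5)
Your proposal is correct and follows essentially the same route as the paper: start from Proposition~\ref{Prop_Danny} at $d_0=(2k-1)\ln k-2\ln2>d_\star$ and $\beta=\infty$, slide down in $d$ via Lemma~\ref{lem:monotonyd} to get $f_{d,\infty}(\rho)<f_{d,\infty}(\bar\rho)$, then slide down in $\beta$ via Lemma~\ref{lem:monotony}, using that the non-increasing difference $f_{d,\beta}(\bar\rho)-f_{d,\beta}(\rho)$ has a strictly positive limit as $\beta\to\infty$. Your explicit attention to preserving strictness and to the numerical check $d_\star<d_0$ only makes explicit what the paper leaves implicit.
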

\begin{proof}
Suppose that $\rho\in\bigcup_{s<k}\Dng{s}\setminus\{\bar\rho\}$.
Combining \Prop~\ref{Prop_Danny} with \Lem~\ref{lem:monotonyd}, we see that
	\begin{align*}
	\lim_{\gamma\to\infty}f_{d,\gamma}(\rho)=f_{d,\infty}(\rho)< f_{d,\infty}(\bar\rho)=\lim_{\gamma\to\infty}f_{d,\gamma}(\bar\rho).
	\end{align*}
Hence, $f_{d,\gamma}(\rho)<f_{d,\gamma}(\bar\rho)$ for $\gamma>\beta$ sufficiently large.
Therefore, \Lem~\ref{lem:monotony} entails that $f_{d,\beta}(\rho)<f_{d,\beta}(\bar\rho)$.
\end{proof}

\noindent
Observe that \Prop~\ref{Prop_Danny} (and hence \Cor~\ref{cor:MaxCases}) does not cover the $k$-stable case.
			
			\begin{lemma}
				\label{lem:MaxCases4}
				Let $\beta\ge 0$ and $d\le d_{\star}$. For all $\rho\in\Dng{k}$ we have $f_{d,\beta}(\rho)<f_{d,\beta}(\bar{\rho})$.
			\end{lemma}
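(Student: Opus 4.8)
The plan is to first strip away the parameters $\beta$ and $d$, and then to exploit the rigid structure of a $k$-stable separable matrix to reduce the optimisation of $f_{d,\beta}$ over $\Dng{k}$ to a calculus problem in one real variable. For the reductions: by \Lem~\ref{lem:monotony} the quantity $f_{d,\beta}(\bar\rho)-f_{d,\beta}(\rho)$ is continuous and non-increasing in $\beta$, so if we show its limit $f_{d,\infty}(\bar\rho)-f_{d,\infty}(\rho)$ is positive, then it is positive for every $\beta\ge0$; and the computation behind \Lem~\ref{lem:monotonyd} applies verbatim to any finite $\beta$ (the argument of the logarithm in $\partial_d f_{d,\beta}$ is increasing in $\|\rho\|_2^2$ and lies in $(0,1)$), so $f_{d,\beta}(\bar\rho)-f_{d,\beta}(\rho)$ is non-increasing in $d$ as well, strictly so whenever $\rho\ne\bar\rho$. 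Since every $\rho\in\Dng{k}$ satisfies $\rho\ne\bar\rho$ (the barycentre is $0$-stable), it suffices to prove $f_{d_\star,\infty}(\rho)<f_{d_\star,\infty}(\bar\rho)$ with a gap that is uniform over $\Dng{k}$.

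Next I would pin down the structure. Double stochasticity implies that no row and no column of $\rho$ carries two entries larger than $0.51$, and $k$-stability then forces the $k$ large entries to be the support of a permutation $\pi$ of $[k]$, i.e.\ $\rho_{i\pi(i)}>0.51$ for all $i$; by separability $\rho_{i\pi(i)}\ge1-\kappa$. Put $\epsilon_i=1-\rho_{i\pi(i)}\in[0,\kappa]$ and $t=\sum_i\epsilon_i\in[0,k\kappa]$. I bound the entropy and energy parts of $f_{d,\infty}(\rho)=H(k^{-1}\rho)+\frac d2\ln(1-\frac2k+\|\rho\|_2^2/k^2)$ separately: Fact~\ref{fact:entropy} applied to the $i$-th row with $I=[k]\setminus\{\pi(i)\}$, together with concavity of $h$, gives $H(k^{-1}\rho)\le\ln k+h(t/k)+\frac tk\ln(k-1)$, while $\sum_{j\ne\pi(i)}\rho_{ij}^2\le\epsilon_i^2\le\kappa\epsilon_i$ gives $\|\rho\|_2^2\le k-2t(1-\kappa)$. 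Adding, $f_{d,\infty}(\rho)\le\Phi(t)$ with
\[ \Phi(t)=\ln k+h(t/k)+\frac tk\ln(k-1)+\frac d2\ln\!\left(1-\frac1k-\frac{2t(1-\kappa)}{k^2}\right),\qquad t\in[0,k\kappa]. \]
These two bounds are never tight together (spreading the off-diagonal mass maximises the entropy while concentrating it maximises $\|\rho\|_2^2$), so $\Phi$ over-estimates $f_{d,\infty}$, but the resulting slack is only $O(d\kappa^2/k)=O(\ln^{41}k/k^2)=o(k^{-3/2})$ and will not matter.

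It then remains to show $\Phi(t)<f_{d_\star,\infty}(\bar\rho)=2\ln k+d_\star\ln(1-1/k)$ on $[0,k\kappa]$. With $\Delta(t):=f_{d_\star,\infty}(\bar\rho)-\Phi(t)$ one has $\Delta(0)=\ln k+\frac{d_\star}2\ln(1-1/k)$ and, after collecting logarithms,
\[ \Delta(t)=\Delta(0)-h(t/k)-\frac tk\ln(k-1)-\frac{d_\star}2\ln\!\left(1-\frac{2t(1-\kappa)}{k(k-1)}\right). \]
Expanding $h$ and the logarithm around $t/k=0$ and using $t\le k\kappa=\ln^{20}k$ and $d_\star=(2k-1)\ln k-2-k^{-1/2}$ to check that every discarded term is $o(k^{-3/2})$ uniformly in $t$, this simplifies to $\Delta(t)=\Delta(0)+\frac tk(\ln t-1)+o(k^{-3/2})$. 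A direct expansion gives $\Delta(0)=k^{-1}+\tfrac12k^{-3/2}+o(k^{-3/2})$, whereas $\frac tk(\ln t-1)$ is minimised over $t>0$ at $t=1\in[0,k\kappa]$ with value $-k^{-1}$. Hence $\Delta(t)\ge\tfrac12k^{-3/2}+o(k^{-3/2})>0$ for $k\ge k_0$, which is the required uniform gap.

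The main obstacle is this last step, which is genuinely tight. The ``entropy bump'' $\frac tk(\ln t-1)$ — the entropic gain of displacing a single vertex's worth of mass off the near-identity overlap — descends to exactly $-k^{-1}$ and nearly annihilates the first-moment slack $\Delta(0)$, which is itself of order $k^{-1}$ precisely because $d_\star\approx(2k-1)\ln k-2$. The constant $-2$ in $d_\star$ is what makes $\Delta(0)$ a little bigger than $k^{-1}$, and the $-k^{-1/2}$ leaves the residual $\tfrac12k^{-3/2}$ of slack that dominates every lower-order error (including the $o(k^{-3/2})$ loss in the $\Phi$-bound). Carrying the Taylor expansions far enough that the $k^{-1}$ terms cancel exactly, and verifying that every omitted term is $o(k^{-3/2})$ throughout $t\in[0,\ln^{20}k]$, is where the real care is required.
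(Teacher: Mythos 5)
Your proof is correct, and for the core structural step it takes a genuinely different route from the paper. The paper first invokes \Lem~\ref{lem:FormMaxMatrix}(2) row by row to argue that the maximizer of $f_{d,\beta}$ over $\Dng{k}$ can be taken to be the single matrix $\rhos=(1-1/k)\id+k^{-2}\vecone$, and then evaluates $f_{d,\infty}(\rhos)$ and $f_{d,\infty}(\bar\rho)$ explicitly with $d=(2k-1)\ln k-c$, arriving at the gap $(c-2)/(2k)+O_k(\ln k/k^2)$, which is positive precisely because $c\ge 2+k^{-1/2}=2+\omega_k(\ln k/k)$; monotonicity in $\beta$ (\Lem~\ref{lem:monotony}) then transfers the conclusion to all finite $\beta$. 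You instead bypass \Lem~\ref{lem:FormMaxMatrix} (and hence Claim~\ref{claim:bestrho} and Fact~\ref{lem:xi}) entirely: after observing that double stochasticity plus $k$-stability forces the large entries onto a permutation with diagonal deficits $\epsilon_i\le\kappa$, you dominate $f_{d_\star,\infty}$ on all of $\Dng{k}$ by the one-parameter function $\Phi(t)$, $t=\sum_i\epsilon_i$, via Fact~\ref{fact:entropy} with Jensen for the entropy and an elementary Frobenius-norm bound for the energy, and then do one-variable calculus; the minimum of the entropy bump $\frac tk(\ln t-1)$ at $t=1$ implicitly recovers the paper's $\rhos$, and your final comparison $\Delta(0)-k^{-1}=\tfrac12k^{-3/2}+o(k^{-3/2})$ is exactly the paper's $(c-2)/(2k)$ with $c=2+k^{-1/2}$. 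Your version is more self-contained and yields a gap that is uniform over $\Dng{k}$ rather than only at the candidate maximizer, at the cost of having to verify that all expansion errors are $o(k^{-3/2})$ uniformly for $t\le k\kappa=\ln^{20}k$ (which your bounds $O(\ln^{41}k/k^2)$ etc.\ do deliver); the paper's version reuses machinery it needs elsewhere anyway. One cosmetic remark: the finite-$\beta$ monotonicity in $d$ you sketch at the outset is true but unnecessary, since you can first descend from $d'$ to $d_\star$ at $\beta=\infty$ via \Lem~\ref{lem:monotonyd} and only then descend in $\beta$ via \Lem~\ref{lem:monotony}.
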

			\begin{proof}
				Because $\rho\in\Dng{k}$ for each $i\in[k]$ there is precisely one entry greater than 0.51. Without loss of generality we may assume that $\rho_{ii}\ge0.51$. By Lemma \ref{lem:FormMaxMatrix} there is $\alpha=k^{-1}+\tilde{O}_k(k^{-2})$ such that the matrix $\rho'$ obtained from $\rho$ by substituting any row $\rho_i$ for a row $\rho_i'$ with $\rho'_{ii}=1-\alpha$ and $\rho'_{ij}=\alpha/(k-1)$ for $j\neq i$ satisfies $f_{d,\beta}(\rho')\ge f_{d,\beta}(\rho)$. Hence, a maximizer of $\rho\in\Dng{k}$ is of the form $\rhos=(1-1/k)\id + 1/k^2\vecone$. Because the matrix $\rhos$ does not further improve from applying the transformation in Lemma \ref{lem:FormMaxMatrix}, it remains to show that 
				\begin{align}
					f_{d,\beta}(\bar{\rho})>f_{d,\beta}(\rhos).\label{eq:finalineq}
				\end{align}
				In the zero temperature case with $d=(2k-1)\ln k - c$, we have
				\begin{align}
					f_{d,\infty}(\bar{\rho})&=\ln k +\frac{1}{k}\sum_{i\le k}H(\bar{\rho}_i)+\frac{d}{2}\ln\left[1-\frac{2}{k}1+\|\bar{\rho}\|_2^2\frac{1}{k^2}\right]=2\ln k + d\ln\left[1-\frac{1}{k}\right]\tag*{[as $\|\bar{\rho}\|_2^2=1$]}\nonumber\\
					&=2\ln k -d\left(\frac{1}{k}+\frac{1}{2k^2}+O_k\left({k^{-3}}\right)\right) = 2\ln k -(2k\ln k - \ln k -c)\left(\frac{1}{k}+\frac{1}{2k^2}+O_k\left({k^{-3}}\right)\right)\nonumber\\
					&=\frac{c}{k}+O_k\left(\frac{\ln k}{k^2}\right).\label{eq:fbarrhofin}
				\end{align}
				On the other hand the matrix $\rhos$ satisfies
				\begin{align}
					H(k^{-1}\rhos)&=\ln k + \frac{1}{k}\sum_{i\le k}H\left((1-1/k+1/k^2,1/k^2,\ldots,1/k^2)\right)\nonumber\\
					&=\ln k - \left(1-\frac{1}{k}+\frac{1}{k^2}\right)\ln\left(1-\frac{1}{k}+\frac{1}{k^2}\right)+\frac{(k-1)}{k^2}\ln k^2.\label{eq:entropyrhos}
				\end{align}
				Because $\|\rhos\|_2^2=\frac{k(k-1)}{k^4}+k(1-\frac{1}{k}+\frac{1}{k^2})^2$ and $\beta\ge \ln k$, setting $d=(2k-1)\ln k - c$ we obtain
				\begin{align}
					E(\rhos)&=\frac{d}{2}\ln\left[1-\frac{2}{k}+\frac{1}{k^2}\left(\frac{k(k-1)}{k^4}+k\left(1-\frac{1}{k}+\frac{1}{k^2}\right)^2\right)\right]\nonumber\\
					&=\frac{d}{2}\ln\left[1-\left(\frac{1}{k}+\frac{2}{k^2}+O_k\left({k^{-3}}\right)\right)\right]=-\frac{d}{2}\left(\frac{1}{k}+\frac{1}{k^2}+\frac{1}{2}\left(\frac{1}{k}+\frac{2}{k^2}\right)^2+O_k\left(k^{-3}\right)\right)\nonumber\\
					&=-\left(k\ln k -\frac{\ln k}{2}-\frac{c}{2}\right)\left(\frac{1}{k}+\frac{5}{2k^2}+O_k\left(k^{-3}\right)\right)\nonumber\\
					&=-\ln k -\frac{2\ln k}{k}+\frac{c}{2k}+O_k\left(\frac{\ln k}{k^2}\right).\label{eq:energyrhos}
				\end{align}
				Consequently
				\begin{align}
					f_{d,\infty}(\rhos)&=\frac{1}{k}+\frac{c}{2k}+O_k\left(\frac{\ln k}{k^2}\right).\label{eq:frhos}
				\end{align}
				From \eqref{eq:fbarrhofin} and \eqref{eq:frhos} we see that $f_{d,\beta}(\bar{\rho})>f_{d,\beta}(\rhos)$ holds for any $d\le (2k-1)\ln k - 2 -\omega_k(\ln k/k)$ and $\beta=\infty$. Lemma \ref{lem:monotony} concludes the proof by extending \eqref{eq:finalineq} to $\beta\ge \ln k$.
			\end{proof}

\begin{proof}[Proof of \Prop~\ref{prop:maxsep}]
Because $\Dg$ decomposes into disjoint subsets $\Dsg$, $s=0,1,\ldots,k$ Proposition \ref{prop:maxsep} is immediate from Corollary \ref{cor:MaxCases} and Lemma \ref{lem:MaxCases4}.
\end{proof}

			\begin{proof}[Proof of Proposition \ref{prop:smm}]
				By definition of $\cB_{sep}$ and Proposition \ref{prop:sepbal} we have
				\begin{align}
					\Erw[\Zkg{\G}^2]\sim\sum_{\rho\in\cR\cap\cB_{sep}}\Erw[\Zrb(\G)].\label{eq:smm1}
				\end{align}
By Propositions \ref{prop:exppairs} and~\ref{prop:maxsep},
				\begin{align}
					\sum_{\rho\in\cR\cap\cB_{sep}}\Erw[\Zrb(\G)]=\sum_{\rho\in\cR\cap\cB_{sep}}\exp(nf_{d,\beta}(\rho)+o(n))	=\exp\left(2n\ln k+nd\ln\left(1-c_\beta/k\right)+o(n)\right).\label{eq:smm3}
				\end{align}
	Combining \eqref{eq:smm1}--\eqref{eq:smm3} with \eqref{eq:balO} and taking logarithms yields the assertion.
			\end{proof}


\begin{thebibliography}{29}
	
	\bibitem{AchFried}
	D.~Achlioptas, E.~Friedgut:
	A sharp threshold for $k$-colorability.
	Random Struct.\ Algorithms {\bf 14} (1999) 63--70.
	
	\bibitem{Barriers}
	D.~Achlioptas, A.~Coja-Oghlan:
	Algorithmic barriers from phase transitions.
	Proc.~49th FOCS (2008) 793--802.
	
	\bibitem{AchMolloy}
	D.\ Achlioptas, M.\ Molloy:
	The analysis of a list-coloring algorithm on a random graph.
	Proc.\ 38th FOCS (1997) 204--212.
	
	\bibitem{AchNaor}
	D.~Achlioptas, A.~Naor:
	The two possible values of the chromatic number of a random graph.
	Annals of Mathematics {\bf 162} (2005), 1333--1349.
	
	\bibitem{PeterCatherine}
	P.\ Ayre, A.\ Coja-Oghlan, C.\ Greenhill:
	Hypergraph coloring up to condensation.
	arXiv:1508.01841 (2015).
	
	\bibitem{Banks}
	J.\ Banks, C.\ Moore:
	Information-theoretic thresholds for community detection in sparse networks.
	arXiv:1601.02658 (2016).
	
	\bibitem{Victor}
	V.~Bapst, A.~Coja-Oghlan:
	The condensation phase transition in the regular $k$-SAT model.
		arXiv:1507.03512 (2015).

	\bibitem{Cond}
	V.~Bapst, A.~Coja-Oghlan, S.~Hetterich, F.~Ra\ss mann, Dan Vilenchik:
	The condensation phase transition in random graph coloring.
	Communications in Mathematical Physics {\bf 341} (2016) 543--606.

	\bibitem{h2c}
	V.~Bapst, A.~Coja-Oghlan, F.~Ra\ss mann:
	A positive temperature phase transition in random hypergraph $2$-coloring.
	Annals of Applied Probability, in press.

	
	\bibitem{Baxter}
	R.~Baxter: 
	Exactly solved models in statistical mechanics. 
	Courier Corporation, (2007).
	

	\bibitem{BST}
	N.\ Bhatnagar, A.\ Sly, P.\ Tetali:
	Decay of correlations for the hardcore model on the $d$-regular random graph.
	arXiv:1405.6160 (2014).
	
	\bibitem{Covers}
	A.~Coja-Oghlan: Upper-bounding the $k$-colorability threshold by counting covers.
	Electronic Journal of Combinatorics {\bf 20} (2013) P32.
	
	\bibitem{Reg}
	A.~Coja-Oghlan, C.\ Efthymoiu, S.\ Hetterich: On the chromatic number of random regular graphs.
	Journal of Combinatorial Theory, Series B {\bf116}  (2016) 367--439.
	
	\bibitem{Danny}
	A.~Coja-Oghlan, D.~Vilenchik:
	The chromatic number of random graphs for most average degrees.
	International Mathematics Research Notices, in press.
	
	\bibitem{CDGS}
	P.~Contucci, S.~Dommers, C.~Giardina, S.~Starr:
	Antiferromagnetic Potts model on the \Erdos-\Renyi\ random graph.
	Communications in Mathematical Physics {\bf  323} (2013) 517--554.
		
	\bibitem{DMSS}
	A.\ Dembo, A.\ Montanari, A.\ Sly, N.\ Sun: The replica symmetric solution for Potts models on $d$-regular graphs.
	Comm.\ Math.\ Phys.\ {\bf 327} (2014) 551--575.

	\bibitem{DFG}
	M.~Dyer, A.~Frieze, C.~Greenhill:
	On the chromatic number of a random hypergraph.
	Journal of Combinatorial Theory, Series B.
	\textbf{113} (2015), 68-122
	
	\bibitem{Efthy14}
	C.~Efthymiou: MCMC sampling colourings and independent sets of G(n, d/n) near uniqueness threshold. 
	Proc.\ 25th SODA (2014) 305--316.
	
	\bibitem{Efthy12}
	C.~ Efthymiou: Switching colouring of $G(n, d/n)$ for sampling up to Gibbs uniqueness threshold. 
	Proc.\ 22nd ESA (2014)  371-381
		
	\bibitem{ER}
	P.\ Erd\H os, A.\ R\'enyi: On the evolution of random graphs.
	Magayar Tud.\ Akad.\ Mat.\ Kutato Int.\ Kozl.\ {\bf 5} (1960) 17--61.
	
	\bibitem{janson2011random}
	S.~Janson, T.~Luczak, A.~Rucinski:
	Random graphs. Vol. 45. John Wiley \& Sons, (2011).
		
	\bibitem{KPGW}
	G.~Kemkes, X.~P\'erez-Gim\'enez, N.~Wormald:
	On the chromatic number of random $d$-regular graphs.
	Advances in Mathematics {\bf 223}  (2010) 300--328.
		
	\bibitem{pnas}
	F.~Krzakala, A.~Montanari, F.~Ricci-Tersenghi, G.~Semerjian, L.~Zdeborova:
	Gibbs states and the set of solutions of random constraint satisfaction problems.
	Proc.~National Academy of Sciences {\bf104} (2007) 10318--10323.
		
	\bibitem{MM}
	M.~M\'ezard, A.~Montanari:
	Information, physics and computation.
	Oxford University Press~2009.
	
	\bibitem{MMtrees}
	M.~M\'ezard, A.~Montanari:
	Reconstruction on trees and spin glass transition.
	Journal of statistical physics 124.6 (2006): 1317-1350.
		
	\bibitem{Molloy}
	M.~Molloy: The freezing threshold for $k$-colourings of a random graph.
	Proc.\ 43rd STOC (2012) 921--930.
	

	\bibitem{LFglass}
	L.~Zdeborov\'a, F.~Krzakala: 
	Potts Glass on Random Graphs.
	EPL (Europhysics Letters) 81.5 (2008): 57005.
	
	\bibitem{LenkaFlorent}
	L.~Zdeborov\'a, F.~Krzakala: Phase transition in the coloring of random graphs.
	Phys.\ Rev.\ E {\bf76} (2007) 031131.

	
\end{thebibliography}
\end{document}